\newcommand{\nint}{\int_{-\infty}^{+\infty}}
\newcommand{\R}{\mathbb{R}}
\newcommand{\G}{G}
\newcommand{\RN}{\mathbb{R}}
\newcommand{\C}{\mathbb{C}}
\newcommand{\var}{\varepsilon}
\newcommand{\dist}{\mathrm{dist}}
\newcommand{\E}{{E}}
\newcommand{\Es}{E_S}
\newcommand{\Er}{E^*}
\newcommand{\Ers}{E_r ^*}
\newcommand{\I}{I}
\newcommand{\Ir}{I^*}
\newcommand{\Ch}{C}
\newcommand{\Chr}{C^*}
\newcommand{\Chrr}{C^*_r}
\newcommand{\PH}{P}
\newcommand{\X}{\mathbf{X}}
\newcommand{\M}{M}
\newcommand{\Mr}{M^*}
\newcommand{\V}{W}
\newcommand{\Gw}{\Gamma}
\newcommand{\GSc}{\Gamma_\sigma}
\newcommand{\wk}{\rightharpoonup}
\renewcommand{\Re}{\mathrm{Re}}
\renewcommand{\Im}{\mathrm{Im}}
\theoremstyle{definition}
\newtheorem{cor}{Corollary}[]
\newtheorem{lemma}{Lemma}[]
\newtheorem{remark}{Remark}[section]
\newtheorem{theorem}{Theorem}[]
\newtheorem{proposition}{Proposition}
\newtheorem*{definition*}{Definition}
\title[]{Multiple normalized standing-waves
solutions to the scalar non-linear Klein-Gordon equation with two competing powers}
\author[Daniele Garrisi]{}
\subjclass{Primary: 35Q55; Secondary: 47J35.}
\keywords{Stability, uniqueness, Klein-Gordon equation}
\email{daniele.garrisi@nottingham.edu.cn}
\thanks{The author was supported by the School of Mathematical Sciences of the
University of Nottingham Ningbo China.}
\begin{document}
\begin{abstract}
In this work we prove the existence of standing-wave solutions to the scalar
non-linear Klein-Gordon equation in dimension one and the stability of the ground-state, the
set which contains all the minima of the energy constrained to the manifold of the states
sharing a fixed charge. For non-linearities which are combinations of two competing powers
we prove that standing-waves in the ground-state are orbitally stable. We also show the 
existence of a degenerate minimum and the existence of two positive and radially symmetric
minima having the same charge.
\end{abstract}
\maketitle
\thispagestyle{empty}
\centerline{\scshape Daniele Garrisi}
\medskip
{\footnotesize
\centerline{Room 324, Sir Peter Mansfield Building}
\centerline{School of Mathematical Sciences}
\centerline{University of Nottingham Ningbo China}
\centerline{199 Taikang East Road}
\centerline{315100, Ningbo, China}}
\section{Introduction}
In this work we address the problem of existence and stability of standing-wave solutions
to a non-linear Klein-Gordon equation
\begin{equation}
\label{eq.NLKG}
(\partial_{tt}^2 - \partial_{xx}^2 + m^2)\phi + \G'(|\phi|)\cdot\frac{\phi}{|\phi|} = 0
\end{equation}
where $ \G $ is a real-valued even function defined on $ (-\infty,+\infty) $
such that $ \G'(0) = 0 $. A standing-wave is a solution to \eqref{eq.NLKG} which can be 
written as
\begin{equation}
\label{eq.sw}
\phi(t,x) := e^{-i\omega t} R(x),\quad (t,x)\in\R\times\R
\end{equation}
where $ \omega $ is a real number and $ R $ is a real-valued function of class
$ H^1 (\R;\R) $. The standing-waves we are interested on satisfy the following variational characterization: they are minima of the energy functional $ \E $ on the constraint $ \M_\sigma $ which depends on a real parameter $ \sigma $. We set $ \X := H^1 (\mathbb{R};\C)\times L^2 (\mathbb{R};\C) $. For the vectors of this space we will use the notation $ \Phi $, and $ (\phi,\phi_t) $ for its components. The energy and constraint functional are complex-valued functions defined on $ \X $. Given $ (\phi,\phi_t) $ in $ \X $, we define
\begin{gather}
\label{eq.1}
\E(\phi,\phi_t) := \frac{1}{2}\nint |\phi_t(x)|^2 dx + 
\frac{1}{2}\nint|\phi'(x)|^2 dx + \nint\V(\phi(x))dx\\
\Ch(\phi,\phi_t) := -\Im\nint \phi_t(x)\overline{\phi(x)} dx,
\end{gather}
where
\begin{equation}
\label{eq.47}
\V(z) := \frac{1}{2} m^2 |z|^2 + \G(|z|).
\end{equation}
In fact, $ \E $ is a real-valued functional. We will refer to $ \Ch $ with the term \textsl{charge}. The constraint set is defined as
\[
\M_\sigma := \{(\phi,\phi_t)\in\X\mid\Ch(\phi,\phi_t) = \sigma\}\subseteq\X.
\]
Given two vectors $ \Phi = (\phi,\phi_t) $ and $ \Psi = (\psi,\psi_t) $ in $ \X $, we consider the scalar product
\[
(\Phi,\Psi)_\X := \text{Re}\nint\phi\overline{\psi}dx + \text{Re}\nint\phi_t\overline{\psi_t}dx.
\]
On $ \X $ we consider the metric $ d $ induced by the scalar product. In order to define stable subsets of $ \X $, we assume that $ \G $ is such that \eqref{eq.NLKG} is globally well-posed as meant in \cite[Remark~3.5,~p.~126]{Tao06}. That is, given an initial datum $ (\phi_0,\phi_{t,0}) $ in $ H^1\times L^2 $, there exist a unique solution $ \phi $ defined on 
$ (-\infty,+\infty)\times\mathbb{R} $ such that
\begin{equation}
\label{eq.126}
\phi\in C_t^0 H\sp 1 _x ((-\infty,+\infty)\times\RN)\cap 
C\sp 1 _t L\sp 2 _x ((-\infty,+\infty)\times\RN).
\end{equation}
Moreover, $ \E $ and $ \Ch $ are constant on the trajectory
$ (\phi(t,\cdot),\partial_t\phi(t,\cdot)) $, that is
\begin{equation}
\label{eq.101}
\E(\phi(t,\cdot),\partial_t\phi(t,\cdot)) = 
\E(\phi_0,\phi_{t,0}),\quad 
\Ch(\phi(t,\cdot),\partial_t\phi(t,\cdot)) = 
\Ch(\phi_0,\phi_{t,0})
\end{equation}
for every $ t $ in $ (-\infty,+\infty) $, \cite[Remark~3.5,~p.~126]{Tao06}. Then for
every $ t $ in $ (-\infty,+\infty) $ we can define
\[
U(t,\cdot)\colon\X\to\X,\quad
U(t,(\phi_0,\phi_{t,0})) = (\phi(t,\cdot),\partial_t\phi(t,\cdot)).
\]
\begin{definition*}[Stable subsets of $ \X $]
A set $ S\subseteq\X $ is stable if, for every $ \var > 0 $, there exist 
$ \delta > 0 $ such that
\[
\dist(\Phi,S) < \delta\Rightarrow\dist(U(t,\Phi),S) < \var
\]
for every $ t $ in $ (-\infty,+\infty) $ and $ \Phi\in\X $.
\end{definition*}
In this work we present two stability results. The first is the stability of the subset of
$ \X $ called \textsl{ground state}, which is the set of all the minima of $ \E $ on the constraint $ \M_\sigma $. We use the notation
\[
\GSc := \{(\phi,\phi_t)\in\M_\sigma\mid\E(\phi,\phi_t) = \inf_{\M_\sigma} (\E)\}.
\]
The literature is rich of stability results to a variety of differential equations and
coupled differential equations, where the ground state is defined according to the
energy functional and constraint. Since similar techniques are also used, it is covenient
to mention results of stability in differential equations having different structure than
\eqref{eq.NLKG}. Some references are \cite{BBGM07,Shi14} for the non-linear Schr\"odinger equation (NLS) in dimension $ n\geq 3 $, or \cite{CL82} on the stability of (NLS) in dimension $ n\geq 1 $,
\cite{BBBM10} on the non-linear Klein-Gordon equation (NLKG) in dimension $ n\geq 3 $, 
\cite{Gar12} on the coupled non-linear Klein-Gordon (2-NLKG) equation in dimension $ n\geq 3 $,
\cite{GJ16,NW11,Iko14} and \cite{Oht96} on the coupled non-linear Schr\"odinger equation (2-NLS) in dimension $ n = 1 $. We include \cite{NTV14,NTV15,NTV19} which address problems of stability for the non-linear Schr\"odinger equation in bounded domains. For the stability of the ground state we require the following assumptions:
\begin{equation}
\tag{G1}
\label{R1}
\mu := \inf_{s\in(0,+\infty)}\frac{\V(s)}{s^2} > 0
\end{equation}
there exist $ s_0 > 0 $ such that
\begin{equation}
\tag{G2} 
\label{R2}
\G(s_0) < 0
\end{equation}
there exist $ 2 < p < q $ such that 
\begin{equation}
\tag{G3}
\label{R3}
|\G'(s)|\leq C(|s|^{p - 1} + |s|^{q - 1}),\quad \G(0) = 0.
\end{equation}
\hypertarget{R4}{(G4)}. The equation \eqref{eq.NLKG} is globally well-posed in 
$ H^1 (\R;\C)\times L^2 (\R;\C) $.
\begin{theorem}
\label{thm.ground-state-stability}
If (\ref{R1}-\hyperlink{R4}{G4}) 
hold, there exist $ \sigma_* > 0 $ such that $ \GSc $ is stable for every $ \sigma > \sigma_* $.
\end{theorem}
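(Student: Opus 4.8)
The plan is a concentration--compactness argument in the spirit of Cazenave--Lions, carried out after eliminating the second component $\phi_t$. For a fixed $\phi\in H^1(\R;\C)\setminus\{0\}$, the Cauchy--Schwarz inequality gives $\sigma=\Ch(\phi,\phi_t)\le\|\phi_t\|_{L^2}\|\phi\|_{L^2}$, with equality precisely when $\phi_t=-i\sigma\|\phi\|_{L^2}^{-2}\phi$; minimizing $\tfrac12\|\phi_t\|_{L^2}^2$ under the charge constraint we obtain, writing
\[
J_\sigma(\phi):=\frac{\sigma^2}{2\|\phi\|_{L^2}^2}+\frac12\|\phi'\|_{L^2}^2+\nint\V(\phi),
\]
that $d(\sigma):=\inf_{\M_\sigma}\E=\inf\{J_\sigma(\phi)\mid\phi\in H^1(\R;\C)\setminus\{0\}\}$, and that $(\phi,\phi_t)\in\M_\sigma$ realizes $d(\sigma)$ iff $\phi$ minimizes $J_\sigma$ and $\phi_t=-i\sigma\|\phi\|_{L^2}^{-2}\phi$. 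By \eqref{R1} one has $\nint\V(\phi)\ge\mu\|\phi\|_{L^2}^2$, so $J_\sigma$ is coercive on $H^1$ and $d(\sigma)>0$. Observe also that if $(\phi_n)$ is bounded in $H^1$ with $\|\phi_n\|_{L^r}\to0$ for some $r>2$, then $\nint\G(|\phi_n|)\to0$ by \eqref{R3}, hence $\liminf_n J_\sigma(\phi_n)\ge\liminf_n\bigl(\tfrac{\sigma^2}{2\|\phi_n\|_{L^2}^2}+\tfrac{m^2}{2}\|\phi_n\|_{L^2}^2\bigr)\ge m\sigma$. Dilating a fixed profile in $x$ shows $d(\sigma)\le m\sigma$, continuity and monotonicity of $\sigma\mapsto d(\sigma)$, and, via $\phi\mapsto\phi(\cdot/t)$, that $d(t\sigma)\le t\,d(\sigma)$ for $t\ge1$. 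The crucial quantitative fact is the existence of $\sigma_*>0$ with $d(\sigma)<m\sigma$ for $\sigma>\sigma_*$: testing $J_\sigma$ with a function equal to the value $s_0$ of \eqref{R2} on an interval of length $\ell$ and cut off to $0$ at the two ends gives $\nint\V(\phi)\le\ell\,\V(s_0)+O(1)$ and $\|\phi'\|_{L^2}^2=O(1)$, the $O(1)$ terms controlled by \eqref{R3}; choosing $\ell\sim\sigma/(ms_0^2)$ so that $\tfrac{\sigma^2}{2\|\phi\|_{L^2}^2}+\tfrac{m^2}{2}\|\phi\|_{L^2}^2=m\sigma$ leaves $d(\sigma)\le m\sigma+\ell\,\G(s_0)+O(1)$, and since $\G(s_0)<0$ the linear-in-$\sigma$ term $\ell\,\G(s_0)$ drives this below $m\sigma$ for $\sigma$ large. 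Finally, whenever $d(\sigma)<m\sigma$ every minimizing sequence $(\phi_n)$ of $J_\sigma$ has $\|\phi_n'\|_{L^2}$ bounded away from $0$ (else $\|\phi_n\|_{L^\infty}\to0$ and the observation above forces $d(\sigma)\ge m\sigma$), which sharpens $d(t\sigma)\le t\,d(\sigma)$ to a strict inequality for $t>1$ and, with some extra care when one piece carries a small charge, yields the strict subadditivity $d(\sigma)<d(\alpha)+d(\sigma-\alpha)$ for all $0<\alpha<\sigma$.

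\textbf{Step 2: compactness of minimizing sequences.} Fix $\sigma>\sigma_*$ and let $\Phi_n=(\phi_n,\phi_{t,n})\in\M_\sigma$ with $\E(\Phi_n)\to d(\sigma)$. Coercivity from \eqref{R1} makes $(\Phi_n)$ bounded in $\X$; from $\E(\Phi_n)\ge J_\sigma(\phi_n)\ge d(\sigma)$ we get $J_\sigma(\phi_n)\to d(\sigma)$, $\phi_{t,n}=-i\sigma\|\phi_n\|_{L^2}^{-2}\phi_n+o(1)$ in $L^2$, and $\|\phi_n\|_{L^2}$ bounded away from $0$. Apply Lions' concentration--compactness lemma to $(|\phi_n|^2)$. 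Vanishing is excluded: it forces $\|\phi_n\|_{L^r}\to0$ for $r>2$, so by the observation in Step 1, $d(\sigma)=\lim_n J_\sigma(\phi_n)\ge m\sigma$, against $d(\sigma)<m\sigma$. Dichotomy is excluded by the strict subadditivity of $d$: splitting $\phi_n$ into two far-apart pieces, each completed with its optimal second component and carrying charges tending to $\alpha$ and to $\sigma-\alpha$, would force $d(\sigma)\ge d(\alpha)+d(\sigma-\alpha)$. Hence, along a subsequence, there are $y_n\in\R$ with $\phi_n(\cdot+y_n)\to\phi$ strongly in $L^r$ for $2\le r<\infty$ and weakly in $H^1$; then $\nint\V(\phi_n)\to\nint\V(\phi)$, and by weak lower semicontinuity of the gradient norm $J_\sigma(\phi)=d(\sigma)$ and $\phi_n(\cdot+y_n)\to\phi$ strongly in $H^1$. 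With $\phi_{t,n}=-i\sigma\|\phi_n\|_{L^2}^{-2}\phi_n+o(1)$ this upgrades to $\Phi_n(\cdot+y_n)\to\Phi:=(\phi,-i\sigma\|\phi\|_{L^2}^{-2}\phi)$ in $\X$, with $\Phi\in\GSc$. In particular $\GSc\ne\emptyset$, and every minimizing sequence for $\E$ on $\M_\sigma$ is relatively compact modulo translations with all limit points in $\GSc$.

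\textbf{Step 3: stability.} Suppose $\GSc$ is not stable for some $\sigma>\sigma_*$. Then there are $\var_0>0$, initial data $\Phi_n\in\X$ with $\dist(\Phi_n,\GSc)\to0$, and times $t_n$ with $\dist(U(t_n,\Phi_n),\GSc)\ge\var_0$. Since $\E$ and $\Ch$ are continuous on $\X$, conserved along $U$ by \eqref{eq.101}, and equal to $d(\sigma)$ and $\sigma$ on $\GSc$, the sequence $\Psi_n:=U(t_n,\Phi_n)$ satisfies $\E(\Psi_n)\to d(\sigma)$ and $\Ch(\Psi_n)\to\sigma$. Multiplying the second component of $\Psi_n$ by the real scalar $\sigma/\Ch(\Psi_n)$ (which tends to $1$) yields $\widetilde\Psi_n\in\M_\sigma$ with $\dist(\widetilde\Psi_n,\Psi_n)\to0$ and $\E(\widetilde\Psi_n)\to d(\sigma)$, i.e.\ a minimizing sequence for $\E$ on $\M_\sigma$. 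By Step 2 a subsequence of $\widetilde\Psi_n$, after a translation, converges in $\X$ to some $\Phi\in\GSc$; since $\E$, $\Ch$ and $U$ commute with spatial translations, $\GSc$ is translation invariant, so the translations are harmless and $\dist(\widetilde\Psi_n,\GSc)\to0$, whence $\dist(\Psi_n,\GSc)\to0$ along the subsequence --- contradicting $\dist(\Psi_n,\GSc)\ge\var_0$.

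\textbf{Main obstacle.} The heart of the matter is Step 1: pinning down the threshold $\sigma_*$ by the test-function computation and, above all, converting $d(\sigma)<m\sigma$ together with the scaling behaviour of $d$ into the strict subadditivity that rules out dichotomy in Step 2 --- the decomposition in which one piece carries a small charge being the delicate case. Once that is in place, Steps 2 and 3 follow by standard arguments.
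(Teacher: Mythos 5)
Your proposal is correct and follows the same three-stage skeleton as the paper (eliminate $\phi_t$, prove concentration--compactness of minimizing sequences, then the standard contradiction argument using conservation of $\E$ and $\Ch$ under the flow), but the compactness core is handled by a genuinely different route. The paper never proves strict subadditivity of $\I(\sigma)$: after the reduction $P(\phi,\phi_t)=(|\phi|,\sigma\|\phi\|_{L^2}^{-2})$ it shows that a minimizing sequence projects to an essentially minimizing sequence of the Schr\"odinger functional $\Es$ on the sphere $S(\lambda)$ with $\inf_{S(\lambda)}\Es<0$, invokes the concentration--compactness lemma of \cite[Lemma~2.3]{GG17} for that problem, and recovers strong $L^2$ convergence of $\phi_{n,t}$ through a Brezis--Lieb splitting; similarly, the threshold $\sigma_*$ comes from citing negativity of $\inf_{S(\lambda)}\Es$ for large $\lambda$ rather than from your explicit plateau test function. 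You instead keep the reduced functional $J_\sigma$ with the nonlocal term $\sigma^2/(2\|\phi\|_{L^2}^2)$, derive $d(t\sigma)\le t\,d(\sigma)$, its strictness when $d(\sigma)<m\sigma$ from the gradient lower bound, deduce strict subadditivity, and run Lions' lemma directly; your recovery of $\phi_t$ via the exact decomposition $\phi_{t,n}=-i\sigma\|\phi_n\|_{L^2}^{-2}\phi_n+r_n$ with $\tfrac12\|r_n\|_{L^2}^2=\E(\Phi_n)-J_\sigma(\phi_n)\to 0$ is cleaner than the Brezis--Lieb step. Your route buys self-containedness (no external NLS compactness result); the paper's buys an outsourced dichotomy analysis and a lemma stated for $\Ch(\Phi_n)\to\sigma$, so no renormalization of $\Psi_n$ is needed in your Step 3. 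The one point you leave compressed, the small-charge piece in subadditivity, does close with what you already have: since $d(s)/s$ is non-increasing by your scaling inequality, if the larger piece $\gamma$ satisfies $d(\gamma)=m\gamma$ then also $d(\alpha)=m\alpha$ and $d(\alpha)+d(\gamma)=m\sigma>d(\sigma)$, while otherwise strict subhomogeneity from base $\gamma$ plus $(\alpha/\gamma)d(\gamma)\le d(\alpha)$ yields $d(\sigma)<(\sigma/\gamma)d(\gamma)\le d(\gamma)+d(\alpha)$.
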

We prove the Concentration-Compactness property of the 
minimizing sequences of $ \E $ on $ \M_\sigma $. That is, given a minimizing sequence
$ (\phi_n,\phi_{n,t}) $, there exist $ (y_n)\subseteq\R $ such that a subsequence of 
$ (\phi_n(\cdot + y_n),\phi_{n,t} (\cdot + y_n)) $ converges strongly in $ \X $.
In previous references as \cite{BBBM10,Gar12}, the Concentration-Compactness had been proved 
for particular minimizing sequences, which are the ones satisfying 
$ \phi_{n,t} = -i\omega_n\phi_n $ for some
$ \omega_n $ in $ \R $. Equivalently, that implies the Concentration-Compactness property
of the minimizing sequences of the functional $ \Er $ and constraint $ \Mr_\sigma $ defined as follows:
\begin{gather}
\Er\colon H^1 (\R;\R)\times\R\to\R,\quad \Chr\colon H^1 (\R;\R)\times\R\to\R\\
\label{eq.4}
\Er(u,\omega) := \E(u,-i\omega u) = 
\frac{1}{2}\nint 
\Big(|u'(x)|\sp 2 + \omega^2 |u(x)|^2 + 2\V(u(x))\Big)dx\\
\label{eq.5}
\Chr(u,\omega) = \Ch(u,-i\omega u) = \omega\nint |u(x)|^2 dx
\end{gather}
and
\[
\Mr_\sigma := \{(u,\omega)\in H^1 (\R;\R)\times\R\mid\Chr(u,\omega) = \sigma\}.
\]
We address specifically the dimension $ n = 1 $, a case which was not covered in \cite{BBBM10,Gar12}. This presents different challenges. In fact, when $ n\geq 3 $, the functional $ \Er $ is coercive, provided $ q\leq\frac{2n}{n - 2} $ in \eqref{R3} and 
$ \V\geq 0 $. As we will show in Remark~\ref{rem.coercive}, in dimension $ n = 1 $, the coercivity fails if $ \V\geq 0 $ and $ \V $ has a zero different than the origin, no matter what restriction one sets on $ q $.
The second stability result is about the subset of $ \X $ which is obtained by taking all
the argument translations and multiplication by complex numbers in the unit sphere of an element of the ground state: given $ \Phi $ in $ \GSc $, we set
\begin{equation}
\label{eq.ssw}
\Gw(\Phi) := \{z\Phi(\cdot + y)\mid (z,y)\in S^1\times\R\}.
\end{equation}
The invariances
\begin{equation}
\label{eq.77}
\E(z\Phi(\cdot + y)) = \E(\Phi),\quad\Ch(z\Phi(\cdot + y)) = \Ch(\Phi)
\end{equation}
for every $ (z,y) $ in $ S^1\times\R $ and $ \Phi $ in $ \X $ show that $ \Gw(\Phi) $
is a subset of $ \GSc $. This result addresses specifically the double power non-linearity
\begin{equation}
\label{eq.DP}
G(s) = -as^4 + bs^6,\quad a,b > 0.
\end{equation}
\begin{definition*}[Orbitally stable standing wave]
The standing-wave in \eqref{eq.sw} is orbitally stable if $ \Gw(R,-i\omega R) $
is a stable subset of $ \X $.
\end{definition*}
References on the orbital stability of  $ \Gamma_\sigma $ do not always address
the set \eqref{eq.ssw}. 
In \cite{BBGM07,BBBM10,Gar12} and \cite{GJ16}, only the stability of the ground state has
been proved. Other references who proved the stability of the standing-wave took advantage of more specific assumptions. For instance, 
in \cite{CL82} the non-linearity $ \G $ is a pure-power 
\begin{equation}
\label{eq.PP}
G(s) = -a|s|^p,\quad a > 0,\quad 2 < p < 2 + \frac{4}{n}.
\end{equation}
In \cite{Oht96,NW11,LNW16,Cor16b} the non-linearity $ \G $ is an homogeneous two or more variables function or fourth degree homogeneous polynomial. This choice allows to prove that 
$ \Gw(\Phi) = \GSc $ for every $ \Phi $ element of the ground-state.
Therefore, the stability of $ \Gw(\Phi) $ follows from the stability of the ground-state, that is from Concentration-Compactness property of minimizing sequences which follows
by \cite{Lio84a,Lio84b}.
Such equality is consequence of the uniqueness of positive, decaying and symmetrically decreasing solutions to the elliptic problem $ \Delta R(x) - G'(R(x)) - R(x) = 0 $, \cite{MS87,Kwo89,McL93},
and the symmetry $ G(ts) = t^p G(s)\ (t > 0) $. In fact, one can show that
each set $ \Gw(\Phi) $ contains a unique minimum $ R $ which is positive, even, radially
decreasing and $ H^1 (\R;\R) $. This follows, for instance, from the conclusions of 
\cite{BJM09}. We included a proof in Theorem~\ref{thm.minima-structure}.
Therefore, it is convenient to define
\begin{equation}
\label{eq.critical-set}
K_\sigma := \GSc\cap H^1 _{r,+} (\R;\R),
\end{equation}
where $ H^1 _{r,+} (\R;\R) $ is the set of $ H^1 $ positive and even functions.
The choice of the powers 4 and 6 is in part motivated by historical reasons: in dimension 
$ n = 3 $, the non-linear Klein-Gordon with two competing powers \eqref{eq.DP} was proposed in \cite{Lee81} as a model of 0-spin particles in spin theory. In
\cite{GSS87,GSS90,Sha83,SS85} they proved existence of stable and unstable standing-waves, still in
dimension $ n = 3 $. Moreover, in \eqref{eq.DP} we are able to evaluate explicitly
the energy $ \E $ and the charge $ \Ch $ of the standing-wave \eqref{eq.1}, to prove the
stability of $ \Gw(R,-i\omega R) $ and to count the number of minima and detect which ones are non-degenerate. In order to describe these results, we introduce the notation:
\begin{equation}
\label{eq.127}
\tau(a,b,m) := \frac{2m^2b}{a^2}.
\end{equation}
The behaviour of the number of sets $ \Gw(\Phi) $ and the non-degeneracy of minima depend 
jointly on the non-linearity \eqref{eq.DP} and $ m $ (specifically on $ \tau(a,b,m) $), and
$ \sigma $. We list some of the main results: there exist $ \tau_* > 1 $ such that
\begin{enumerate}[(i)]
\itemsep=0.3em
\item If $ \tau\geq\tau_* $, then $ |K_\sigma| = 1 $ for every $ \sigma > 0 $. Therefore,
in every constraint there is exactly one positive, symmetric minimum from  Theorem~\ref{thm.multiplicity-result}. Therefore, 
$ \GSc = \Gw(\Phi) $ for every
$ \Phi $ in $ \GSc $ just as it happens for pure-powers or double powers in the non-linear
Schr\"odinger equation in \cite{CL82,GG17}
\item if $ \tau = \tau_* $ there exist exactly one level $ \sigma_d $ where the minimum
is degenerate, from Theorem~\ref{thm.degeneracy}. This is a completely different behaviour
than (NLS), where for double powers minima are non-degenerate, \cite{GG17}. Yet the corresponding standing-wave is stable
\item if $ 1 < \tau < \tau_* $, there exist exactly one level $ \sigma_2 $ where
we can observe the existence of \textsl{two} positive, symmetrically decreasing minima,
that is $ |K_\sigma| = 2 $, Theorem~\ref{thm.multiplicity-result}. Consequently we have the disjoint union $ \GSc = \Gw(\Phi_1)\cup\Gw(\Phi_2) $. This is the most unexpected result. In
\cite{Bon10} the author proved that if $ \sigma $ is large enough,
one can expect to find at least as many local minima as the number of connected components of the set $ \{G < 0\} $. According Theorem~\ref{thm.multiplicity-result}, there are more local minima (in fact, minima) than the number of connected components. In fact for $ \G $ in \eqref{eq.DP}, the set is connected. In (2-NLS) one can find more multiplicity results, as \cite{BS19}, even if
critical points are not minima
\item the set $ \Gw(\Phi) $ is stable for every choice of $ \tau > 1 $ and $ \sigma $, 
Theorem~\ref{thm.stability-wave}. This applies in particular to the case where there is
a unique set $ \Gw(\Phi) $, but also in the case where there are two minima and, consequently,
there are two different sets $ \Gamma(\Phi_1) $ and $ \Gamma(\Phi_2) $ disjoint from each other. In \cite{Gar12} we already pointed out methods to prove the stability of $ \Gamma(\Phi) $
regardless of the cardinality of $ K_\sigma $. This is a concrete example showing that the uniqueness of $ \Gamma(\Phi) $ is not necessary.
\end{enumerate}
The paper is organized as follows. In \S\ref{sect.cc} we prove properties of the pairs
$ (\E,\M_\sigma) $ and $ (\Er,M_\sigma^*) $ and the stability of the ground-state. In \S\ref{sect.du}, we count the number of positive and
radially decreasing minima in every constraint $ \M_\sigma $, the cardinality of $ K_\sigma $. 
The result is summarized and
proved in Theorem~\ref{thm.multiplicity-result}. In \S\ref{sect.dd}, we look at the 
degeneracy of minima through Theorem~\ref{thm.degeneracy}. The paper concludes with Theorem~\ref{thm.stability-wave} of \S\ref{sect.sw}, where we prove that all the standing-waves in \eqref{eq.1} such that $ (R,-i\omega R) $ is in $ \GSc $ are orbitally stable.
\smallskip

Throughout all the work it is assumed that $ \sigma > 0 $. Theorems~\ref{thm.multiplicity-result},
\ref{thm.degeneracy} and \ref{thm.stability-wave} can be proved in the case $ \sigma < 0 $ by
observing that the isometry from $ \M_\sigma $ to $ \M_{-\sigma} $ which maps $ (\phi,\phi_t) $
to $ (\phi,-\phi_t) $, does not change the energy $ \E $. 
The case $ \sigma = 0 $ has been addressed separately at the end of \S\ref{sect.sw}.
\section{Properties of the functional $ \E $}
\label{sect.cc}
Some of the properties we are going to prove have a correspondence
with the variational setting of the non-linear Schr\"odinger equation,
treated in \cite{BBGM07}. Therefore, it is convenient to introduce
a notation for the functional
\begin{equation}
\label{eq.49}
\E_S(\phi) := \frac{1}{2} \nint |\phi'(x)|\sp 2 + \nint \G(|\phi(x)|)dx
\end{equation}
and the constraint
\[
S(\lambda) := \{u\in H^1(\R;\C)\mid\|u\|_{L^2}^2 = \lambda\}.
\]
The functionals $ \E $ and $ \Er $, defined in \eqref{eq.1} and \eqref{eq.4}
can be related to each other through the following map
\begin{equation}
\label{eq.11}
P\colon\X\to H^1 (\R;\R)\times\R,\quad
P(\phi,\phi_t) := \left(|\phi|,\frac{\sigma}{\|\phi\|_{L^2}^2}\right)
\end{equation}
which is the one given in \cite[(3.19)]{BBBM10}. Under the assumptions (\ref{R1}-\ref{R3}),
we can prove the following proposition.
\begin{proposition}
\label{prop.1}
The functional $ \E $ satisfies the following properties:
\begin{enumerate}[(i)]
\item\label{prop.1.1} 
$ \Er(P(\Phi))\leq \E(\Phi) $ and $ \Chr(P(\Phi)) = \Ch(\Phi) $ 
for every $ \Phi $ in $ \X $
\item\label{prop.1.2}
$ \E\geq 0 $ on $ \M_\sigma $. Then it is bounded from below on $ \M_\sigma $
\item\label{prop.1.3} 
there exist a continuous real valued function $ h_1 $ such that $ h_1 (0) = 0 $ and
for every $ e $ in $ \R $
\[
\E(\Phi)\leq e\implies\|\Phi\|_X\leq h_1 (e)
\]
for every $ \Phi $ in $ \M_\sigma $ and
\[
\Er(u,\omega)\leq e\implies\|u\|_{H^1}\leq h_1(e)
\]
for every $ (u,\omega) $ in $ \M_\sigma^* $
\item\label{prop.1.4}
there exist a continuous
function $ h_2 $ from $ \mathbb{R}\times\mathbb{R}\times\X $ to $ \R $ which is
bounded on bounded subsets and such that
\[
|\E(\lambda\Phi) - \E(\mu\Phi)|\leq h_2(\lambda,\mu,\Phi)
|\lambda - \mu|
\]
for every triple $ (\lambda,\mu,\Phi) $ in $ \R\times\R\times\X $ 
\item\label{prop.1.5}
$ \E $ and $ \Ch $ are continuously differentiable.
\end{enumerate}
\end{proposition}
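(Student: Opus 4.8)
The plan is to verify (\ref{prop.1.1})--(\ref{prop.1.5}) essentially by direct computation, using three ingredients: the pointwise bound $\V(z)\ge\mu|z|^2$ with $\V(0)=0$, coming from \eqref{R1} and \eqref{R3}; the growth bound \eqref{R3} on $\G'$ together with the one-dimensional Sobolev embedding $H^1(\R)\hookrightarrow L^r(\R)$, $r\in[2,\infty]$; and the diamagnetic inequality $\big|\,|\phi|'\,\big|\le|\phi'|$ a.e. for $\phi\in H^1(\R;\C)$, which also gives $|\phi|\in H^1(\R;\R)$. For (\ref{prop.1.1}), note that $\V$ depends only on $|z|$, so $\V(|\phi|)=\V(\phi)$ and $\Er(|\phi|,\omega)=\tfrac12\int\big|\,|\phi|'\,\big|^2+\tfrac12\omega^2\|\phi\|_{L^2}^2+\int\V(\phi)$. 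Since $\sigma>0$, on $\M_\sigma$ one has $\phi\neq0$, so $P(\Phi)$ is defined and $\Chr(P(\Phi))=\tfrac{\sigma}{\|\phi\|_{L^2}^2}\|\phi\|_{L^2}^2=\sigma=\Ch(\Phi)$. Taking $\omega=\sigma/\|\phi\|_{L^2}^2$, the diamagnetic inequality bounds the gradient term by $\tfrac12\|\phi'\|_{L^2}^2$, and Cauchy--Schwarz applied to $\sigma=-\Im\int\phi_t\overline\phi\le\|\phi_t\|_{L^2}\|\phi\|_{L^2}$ gives $\omega^2\|\phi\|_{L^2}^2=\sigma^2/\|\phi\|_{L^2}^2\le\|\phi_t\|_{L^2}^2$; adding $\int\V(\phi)$ to both sides yields $\Er(P(\Phi))\le\E(\Phi)$.

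Items (\ref{prop.1.2}) and (\ref{prop.1.3}) follow at once from \eqref{R1}, since all three summands of $\E$ (and of $\Er$) are then nonnegative: this gives $\E\ge0$ on $\M_\sigma$, and $\E(\Phi)\le e$ forces $\|\phi'\|_{L^2}^2\le2e$, $\|\phi_t\|_{L^2}^2\le2e$ and $\mu\|\phi\|_{L^2}^2\le\int\V(\phi)\le e$, so $\|\Phi\|_\X\le h_1(e):=\sqrt{4e+e/\mu}$ (set $h_1(e):=0$ for $e<0$), which is continuous with $h_1(0)=0$; the same $h_1$ works for $\Er$ on $\Mr_\sigma$. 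For (\ref{prop.1.4}), using that $\G$ is even, $\E(\lambda\Phi)-\E(\mu\Phi)=\tfrac{\lambda^2-\mu^2}{2}\big(\|\phi_t\|_{L^2}^2+\|\phi'\|_{L^2}^2+m^2\|\phi\|_{L^2}^2\big)+\int\big(\G(|\lambda|\,|\phi|)-\G(|\mu|\,|\phi|)\big)$; the first term equals $\tfrac{|\lambda+\mu|}{2}\big(\|\phi_t\|_{L^2}^2+\|\phi'\|_{L^2}^2+m^2\|\phi\|_{L^2}^2\big)|\lambda-\mu|$, and the mean value theorem with \eqref{R3} bound the second integrand by $C\big(\max(|\lambda|,|\mu|)^{p-1}|\phi|^p+\max(|\lambda|,|\mu|)^{q-1}|\phi|^q\big)|\lambda-\mu|$, whose integral is finite and, by $H^1(\R)\hookrightarrow L^p,L^q$, bounded on bounded sets. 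The coefficient of $|\lambda-\mu|$ is then the required $h_2$: a sum of products of continuous functions of $(\lambda,\mu)$ and of $\Phi$ — here using that $\phi\mapsto\|\phi\|_{L^p}$ is continuous on $H^1$ — hence continuous and bounded on bounded subsets of $\R\times\R\times\X$.

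Finally, for (\ref{prop.1.5}), $\Ch$ is a continuous quadratic form on $\X$, hence smooth, with $D\Ch(\Phi)[\Psi]=-\Im\int(\psi_t\overline\phi+\phi_t\overline\psi)$; the quadratic part $\tfrac12\|\phi_t\|_{L^2}^2+\tfrac12\|\phi'\|_{L^2}^2+\tfrac{m^2}{2}\|\phi\|_{L^2}^2$ of $\E$ is likewise smooth. It then remains to show that $\phi\mapsto\int\G(|\phi(x)|)\,dx$ is $C^1$ on $H^1(\R;\C)$ with Fréchet derivative $\psi\mapsto\int g(\phi)\cdot\psi\,dx$, where $g(z):=\G'(|z|)z/|z|$ for $z\neq0$ and $g(0):=0$ (well defined and continuous since $|g(z)|\le C(|z|^{p-1}+|z|^{q-1})\to0$ as $z\to0$). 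I would argue in the standard way for superposition (Nemytskii) operators: since $H^1(\R)\hookrightarrow L^\infty$, on each ball of $H^1$ one has $|g(z)|\le C_M|z|$, so $\phi\mapsto g(\phi)$ maps $H^1$ continuously into $L^2$ — continuity via the generalized dominated convergence theorem along $H^1$-convergent (hence $L^\infty$- and $L^2$-convergent) sequences — and therefore the candidate derivative is a bounded linear functional depending continuously on $\phi$; a mean-value estimate then upgrades Gateaux to Fréchet differentiability. This last step, the routine but slightly delicate Nemytskii bookkeeping, is the only real obstacle; everything else is elementary.
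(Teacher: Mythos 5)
Your proposal is correct and follows essentially the same route as the paper: the convex (diamagnetic) inequality plus Cauchy--Schwarz for (i), positivity of $ \V $ from \eqref{R1} for (ii)--(iii), a splitting into the quadratic terms plus the $ \G $-part controlled by \eqref{R3} and the mean value theorem for (iv), and the standard Nemytskii/$C^1$ argument for (v). The only differences are cosmetic: where the paper delegates (iv) and (v) to \cite{GG17} and \cite{AP93}, you supply the direct standard estimates yourself, and your restriction of (i) to $ \M_\sigma $ matches the paper's implicit choice $ \omega = \Ch(\Phi)/\|\phi\|_{L^2}^2 $, since $ P $ carries $ \sigma $ in its definition.
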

\begin{proof}
\eqref{prop.1.1}.
This property has been already thoroughly proved in \cite[Lemma~3.3]{BBBM10}.
Here we add some intermediate inequalities. We set
\[
\omega := \frac{\Ch(\Phi)}{\|\phi\|_{L^2} ^2}.
\]
Then, from \eqref{eq.11}, $ P(\Phi) = (|\phi|,\omega) $.
\begin{equation}
\label{eq.10}
\begin{split}
\E(\phi,\phi_t) &= \frac{1}{2}\nint |\phi_t(x)|\sp 2 dx +
\frac{1}{2}\nint \Big(|\phi'(x)|\sp 2 + 2\V(\phi(x))\Big)dx\\
&\geq\frac{1}{2} \nint \Big(||\phi|'(x)|\sp 2 + 2\V (\phi(x))\Big)dx
+ \frac{1}{2}\cdot\frac{|\Ch(\Phi)|^2}{\|\phi\|_{L\sp 2}\sp 2} \\
&= \frac{1}{2} \nint \Big(||\phi|'(x)|\sp 2 + 2\V (|\phi(x)|)\Big)dx
+ \frac{1}{2}\omega^2 \|\phi\|_{L^2} ^2 \\
&= \Er(|\phi|,\omega) = \Er(\PH(\Phi)).
\end{split}
\end{equation}
The second inequality follows from \cite[Theorem~7.8,~p.,~177]{LL01}, 
the Convex Inequality for the gradient, and the Cauchy-Schwarz
inequality applied to the scalar product 
\[
(f,g)_{L^2} := \Re\nint f(x)\overline{g}(x)dx,\quad f = i\phi,\quad g = \phi_t.
\]
\eqref{prop.1.2}. From \eqref{R1}, it follows that $ \V\geq 0 $. Then $ \E(\Phi)\geq 0 $ for
every $ \Phi $ in $ \X $.

\eqref{prop.1.3}. We set $ \Phi = (\phi,\phi_t) $. Again, from \eqref{R1}, we have
\begin{equation*}
2e\geq 2\E\geq \|\phi_t\|_{L^2} ^2 + \|\phi'\|_{{L^2}^2} + \mu\|\phi\|_{{L^2}^2}
\geq\min\{\mu,1\}\|\Phi\|_{{\X}^2}.
\end{equation*}
Then we can define
\begin{equation*}
h_1 (e) := \left(\frac{2e}{\min\{\mu,1\}}\right)^{\frac{1}{2}}.
\end{equation*}
Given $ (u,\omega) $ in $ M_\sigma^* $ if $ \Er(u,\omega)\leq e $, then 
$ \E(u,-i\omega u)\leq e $ by \eqref{eq.4}. Then
\[
\|u\|_{H^1}\leq\|(u,-i\omega u)\|_X\leq h_1(e).
\]
\eqref{prop.1.4}. We can write
\begin{equation}
\label{eq.50}
\begin{split}
|\E(\lambda\Phi) - \E(\mu\Phi)|&\leq
\frac{1}{2}|\lambda^2 - \mu^2|\nint|\phi_t (x)|^2 dx \\
&+ \frac{m^2}{2}|\lambda^2 - \mu^2|\nint|\phi(x)|^2dx \\
&+ \nint |\E_S(\lambda\phi(x)) - \E_S(\mu\phi(x))|dx.
\end{split}
\end{equation}
From (ii) of \cite[Proposition~1]{GG17}, there exist a real-valued function
$ c $ defined on the set $ \R\times\R\times H^1 (\R;\C) $ which is bounded on
bounded sets and such that
\begin{equation}
\label{eq.51}
|\E_S(\lambda\phi) - \E_S(\mu\phi)|\leq |\lambda - \mu|c(\lambda,\mu,\phi).
\end{equation}
In order to apply the quoted proposition, we only need to check that $ \G $
fulfills the property \cite[(G2a)]{GG17}, which corresponds to \eqref{R2}. From \eqref{eq.50} and
\eqref{eq.51}, we obtain
\[
\begin{split}
|\E(\lambda\Phi) - \E(\mu\Phi)|\leq&\frac{1}{2}|\lambda^2 - \mu^2|\|\phi_t\|_{{L^2}^2} + 
\frac{m^2}{2}|\lambda^2 - \mu^2|\|\phi\|_{{L^2}^2} + |\lambda - \mu|c(\lambda,\mu,\phi)\\
&\frac{1}{2}|\lambda - \mu|\left(|\lambda + \mu|\|\phi_t\|_{{L^2}^2} + m^2|\lambda + \mu|
\|\phi\|_{{L^2}^2} + 2c(\lambda,\mu,\phi)\right).
\end{split}
\]
Then, the conclusion follows if we define
\[
h_2 (\lambda,\mu,\Phi) = \frac{1}{2}(|\lambda + \mu|\|\phi_t\|_{{L^2}^2} + m^2|\lambda + 
\mu|\|\phi\|_{{L^2}^2} + 2c(\lambda,\mu,\phi)).
\]
\eqref{prop.1.5}. Here we refer to \cite[Proposition~6]{GG17}, where we showed that under the assumption \eqref{R2}, the functional $ \E_S $ defined in \eqref{eq.49} is
continuously differentiable. The quoted proposition uses the same techniques as \cite[Theorem~2.2,~p.~16]{AP93}. Therefore,
$ \E $ is the sum of $ \E_S $ and a half of the squares of the $ L^2 $-norm of $ \phi_t $ and 
$ \phi $, which are continuously differentiable functions. As for $ \Ch $, we have
\[
\Ch(\phi + h,\phi_t + k) = \Ch(\phi,\phi_t) -\text{Im}\nint k\overline{\phi}dx 
-\text{Im}\nint \phi_t\overline{h}dx - \text{Im}\nint k\overline{h}dx
\]
showing that $ \Ch $ is continuously differentiable as the last term is $ o(h,k) $.
\end{proof}
\begin{remark}
\label{rem.coercive}
In the non-linear Klein-Gordon in dimension $ n\geq 3 $, the authors of \cite{BBBM10} prove
the coercivity of $ \Er $ with the sub-critical assumption $ q < \frac{2n}{n - 2} $ and a weaker assumption than \eqref{R1}, namely $ \V\geq 0 $ and \eqref{R1} is satisfied in a neighbourhood
of the origin. In the proof the use the fact that the $ L^{\frac{2n}{n - 2}} $-norm is estimated from above by the $ L^2 $-norm of the gradient, something which does not hold in dimension one for any $ L^p $-norm. In fact, if \eqref{R1} does not hold, the coercivity fails if 
$ \V\geq 0 $ and $ \V $ has a zero different from the origin. Given $ \sigma > 0 $ and
$ (u,\omega) $ in $ M_\sigma^* $, there holds
\begin{equation*}
\Er(u,\omega) = \frac{1}{2}\|u'\|_{L^2}^2 + \frac{\sigma^2}{2\|u\|_{L^2}^2} + \nint \V(u(x))dx.
\end{equation*}
Suppose that $ \V\geq 0 $ and there exist $ s_0 > 0 $ such that $ \V(s_0) = 0 $, which clearly
contradicts \eqref{R1}. Then $ \Er $ is non-coercive on $ \M_\sigma^* $ for
every choice of $ \sigma $. In fact, for every integer $ k\geq 1 $ consider the test function
\[
u_k(x) := 
\left\{
\begin{array}{ll}
s_0 & \text{ if } 0 \leq |x|\leq k\\
s_0\left(k + 1 - |x|\right) & \text{ if } k\leq |x|\leq k + 1\\
0 & \text{ if } k + 1\leq |x|.
\end{array}
\right.
\]
The function $ u_k $ is a continuous piecewise linear non-decreasing function with compact
support. Therefore, $ u_k $ is $ H^1(\R;\R) $. We have
\[
\nint\V(u_k(x))dx = 2 \V(s_0) k + 2 \int_k^{k + 1} \V\left(s_0(k + 1 - x)\right)dx.
\]
The variable change
\[
s_0\left(k + 1 - x\right) = t
\]
and the assumption $ \V(s_0) = 0 $ yield
\begin{equation*}
\nint \V(u_k(x))dx = 2\V(s_0)k + \frac{2}{s_0}\int_0^{s_0} \V(t)dt = \frac{2}{s_0}\int_0^{s_0} \V(t)dt.
\end{equation*}
Using the same variable change, 
\begin{equation}
\label{eq.72}
\nint u_k^2dx = 2s_0^2k + \frac{2}{s_0}\int_0^{s_0} t^2 dt = 2s_0^2k + \frac{2s_0^2}{3} = 
2s_0^2\left(k + \frac{1}{3}\right)
\end{equation}
and
\begin{equation}
\label{eq.73}
\nint |u_k'|^2dx = 2s_0^2.
\end{equation}
Therefore, if we set $ \omega_k := \sigma \|u_k\|_{L^2}^{-2} $,
\[
\Er(\omega_k,u_k) = s_0^2 + \frac{3\sigma^2}{4s_0^2(3k + 1)} + \frac{2}{s_0}\int_0^{s_0} \V(t)dt
\]
Therefore $ ((u_k,\omega_k))_{k\geq 1} $ is an unbounded sequence in $ H^1 $, from \eqref{eq.72}.
However, $ (E(u_k,\omega_k))_{k\geq 1} $ is bounded, which shows that $ \Er $ is not
coercive on $ \M_\sigma^* $. If we set $ \Phi_k := (u_k,-i\omega_k u_k) $ the sequence of
$ \Phi_n $ is unbounded in $ \X $, while $ \E(\Phi_k) = \Er(u_k,\omega_k) $, from \eqref{eq.4}.
Therefore $ \E $ is not coercive bounded on $ \M_\sigma $. Using $ u_k $ as test function suggests one of the differences between the
case $ n = 1 $ and $ n\geq 2 $. In the latter, the Lebesgue measure of an annulus of fixed width 1 diverges as the radius $ k $ diverges as $ k\to\infty $, while it is constant in the former. Therefore, in \eqref{eq.73}
an integral power of $ k $ should have appeared when $ n\geq 2 $, which would have made the sequence $ (\Er(u_k,\omega_k))_{k\geq 1} $ diverge.
\end{remark}
From the combined power-type estimate \eqref{R3}, we can obtain a combined power-type estimate for $ \G $.
In fact,
\begin{equation}
\label{eq.53}
|\G(s)| \leq \int_0^s |\G'(t)|dt\leq C\int_0^s (|t|^{p - 1} + |t|^{q - 1})dt\leq
\frac{C}{p}|s|^p + \frac{C}{q}|s|^q.
\end{equation}
From \eqref{prop.1.2} of Proposition~\ref{prop.1} both the $ \E $ and $ \Er $ are bounded below
on $ \M_\sigma $ and $ \M_\sigma^* $, respectively. Therefore, the following notations
\[
\I(\sigma) := 
\inf\{\E(\phi,\phi_t)\mid (\phi,\phi_t)\in\M_\sigma\},\quad
\Ir(\sigma) := \inf\{\Er(u,\omega)\mid (u,\omega)\in \M_\sigma^*\}.
\]
are justified.
\begin{proposition}
\label{prop.2}
For every $ \sigma > 0 $, the function $ I $ satisfies the following properties:
\begin{enumerate}[(i)]
\item
\label{prop.2.1}
$ I(\sigma)\leq\sigma m $ 
\item
\label{prop.2.2}
for every $ \vartheta\geq 1 $ and $ \sigma > 0 $, there holds
$ I(\vartheta\sigma)\leq\vartheta I(\sigma) $. 
If the equality holds, then $ \vartheta = 1 $ or $ I(\sigma) = \sigma m $
\item
\label{prop.2.4}
$ \I\colon(0,+\infty)\to\R $ is a continuous function
\item
\label{prop.2.3}
if \eqref{R2} holds, there exist $ \sigma_* \geq 0 $ such that
\[
\frac{I(\sigma)}{\sigma} < m\text{ on } (\sigma_*,+\infty),
\quad \frac{I(\sigma)}{\sigma} = m\text{ on } (0,\sigma_*].
\]
If there exist $ \sigma $ such that $ 0 < \sigma < \sigma_* $, then
$ \GSc $ is empty.
\end{enumerate}
\end{proposition}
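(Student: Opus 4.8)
The plan is to transfer the whole question to the reduced functional $\Er$ and its infimum $\Ir$. First I would record that $\I(\sigma)=\Ir(\sigma)$ for every $\sigma>0$: the inequality $\Ir\le\I$ is Proposition~\ref{prop.1}\eqref{prop.1.1} applied via $\PH$, while $\I\le\Ir$ holds because $(u,-i\omega u)$ lies in $\M_\sigma$ with $\E(u,-i\omega u)=\Er(u,\omega)$ whenever $(u,\omega)\in\Mr_\sigma$, by \eqref{eq.4}--\eqref{eq.5}. Minimising over the free parameter $\omega=\sigma/\|u\|_{L^2}^2$ gives
\[
\Ir(\sigma)=\inf_{u\in H^1(\R;\R),\,u\not\equiv 0}\Big(\tfrac12\|u'\|_{L^2}^2+\frac{\sigma^2}{2\|u\|_{L^2}^2}+\nint\V(u(x))\,dx\Big).
\]
The main tool is the spatial dilation $u_\vartheta(x):=u(x/\vartheta)$, $\vartheta>0$, which maps $\Mr_\sigma$ into $\Mr_{\vartheta\sigma}$ and satisfies $\Er(u_\vartheta,\omega)=\vartheta\,\Er(u,\omega)-\tfrac12(\vartheta-\vartheta^{-1})\|u'\|_{L^2}^2$. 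For (i) I would take $u_k$ with $\|u_k\|_{L^2}^2=\sigma/m$, $\|u_k\|_\infty\to0$ and $\|u_k'\|_{L^2}\to0$ (e.g.\ $u_k=a_k\psi(\cdot/k)$ for a fixed bump $\psi$); then $(u_k,m)\in\Mr_\sigma$ and $\Er(u_k,m)=\tfrac12\|u_k'\|_{L^2}^2+\sigma m+\nint\G(u_k)\to\sigma m$, the last term bounded by $\tfrac Cp\|u_k\|_\infty^{p-2}\|u_k\|_{L^2}^2+\tfrac Cq\|u_k\|_\infty^{q-2}\|u_k\|_{L^2}^2$ thanks to \eqref{eq.53}. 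The inequality in (ii) follows at once from the dilation identity, since $\vartheta-\vartheta^{-1}\ge0$ when $\vartheta\ge1$.

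The equality case of (ii) is the step I expect to be the real obstacle, since it requires turning the soft hypothesis ``equality holds'' into quantitative control of a minimising sequence. Assume $\vartheta>1$ and $\I(\vartheta\sigma)=\vartheta\I(\sigma)$. Given a minimising sequence $(u_n,\omega_n)\in\Mr_\sigma$ for $\Ir(\sigma)$, the dilation identity yields $\tfrac12(\vartheta-\vartheta^{-1})\|u_n'\|_{L^2}^2\le\vartheta\bigl(\Er(u_n,\omega_n)-\Ir(\sigma)\bigr)\to0$, so $\|u_n'\|_{L^2}\to0$. Writing $\lambda_n:=\|u_n\|_{L^2}^2$, hypothesis \eqref{R1} gives $\nint\V(u_n)\ge\mu\lambda_n$ with $\mu>0$, hence $\Er(u_n,\omega_n)\ge\frac{\sigma^2}{2\lambda_n}+\mu\lambda_n$ and $\lambda_n$ remains in a compact subset of $(0,+\infty)$. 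The one-dimensional bound $\|u_n\|_\infty^2\le2\|u_n\|_{L^2}\|u_n'\|_{L^2}$ then forces $\|u_n\|_\infty\to0$, so $\nint\G(u_n)\to0$ by \eqref{eq.53}, and consequently $\Er(u_n,\omega_n)=\frac{\sigma^2}{2\lambda_n}+\frac{m^2\lambda_n}{2}+o(1)\ge\sigma m+o(1)$ by the arithmetic--geometric inequality. Thus $\Ir(\sigma)\ge\sigma m$, which together with (i) gives $\I(\sigma)=\sigma m$.

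For (iii) I would use Proposition~\ref{prop.1}\eqref{prop.1.4}. Fix $\sigma_0>0$ and $\var>0$ and pick $\Phi\in\M_{\sigma_0}$ with $\E(\Phi)\le\I(\sigma_0)+\var$; then $\sqrt{\sigma/\sigma_0}\,\Phi\in\M_\sigma$ and, as $\sigma\to\sigma_0$, $\bigl|\E(\sqrt{\sigma/\sigma_0}\,\Phi)-\E(\Phi)\bigr|\le h_2(\sqrt{\sigma/\sigma_0},1,\Phi)\bigl|\sqrt{\sigma/\sigma_0}-1\bigr|\to0$, so $\limsup_{\sigma\to\sigma_0}\I(\sigma)\le\I(\sigma_0)$. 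For the opposite inequality, given $\sigma_n\to\sigma_0$ choose $\Phi_n\in\M_{\sigma_n}$ with $\E(\Phi_n)\le\I(\sigma_n)+1/n$; by (i) these energies are bounded, so Proposition~\ref{prop.1}\eqref{prop.1.3} makes the $\Phi_n$ bounded in $\X$, whence $h_2(\sqrt{\sigma_0/\sigma_n},1,\Phi_n)$ is bounded and $\E(\sqrt{\sigma_0/\sigma_n}\,\Phi_n)=\E(\Phi_n)+o(1)$; since $\sqrt{\sigma_0/\sigma_n}\,\Phi_n\in\M_{\sigma_0}$ this gives $\I(\sigma_0)\le\liminf_n\I(\sigma_n)$. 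I would also note that (ii) implies $\sigma\mapsto\I(\sigma)/\sigma$ is non-increasing, and that by (i) and \eqref{R1} it takes values in $(0,m]$.

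For (iv), set $\sigma_*:=\sup\{\sigma>0\mid\I(\sigma)=\sigma m\}$ (with $\sup\emptyset=0$). As $\I(\sigma)/\sigma$ is non-increasing, bounded above by $m$, and continuous by (iii), the set $\{\sigma\mid\I(\sigma)/\sigma=m\}$ is a closed downward-closed subset of $(0,+\infty)$; it is proper (see below), hence equals $(0,\sigma_*]$ with $0\le\sigma_*<+\infty$, and $\I(\sigma)/\sigma<m$ on $(\sigma_*,+\infty)$. Properness comes from \eqref{R2}: since $\G(s_0)<0$ we have $\V(s_0)/s_0^2<m^2/2$, while $\V(s_0)>0$ by \eqref{R1}, so $\nu:=\sqrt{2\V(s_0)/s_0^2}<m$; a plateau test function of height $s_0$, large plateau length $L$ and unit ramps (as in Remark~\ref{rem.coercive}), optimised in $L$, yields $\I(\sigma)\le C_2+\sigma\nu$ with $C_2$ independent of $\sigma$, which is $<\sigma m$ once $\sigma$ is large. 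Finally, for the emptiness assertion, suppose $0<\sigma<\sigma_*$ and $\Phi=(\phi,\phi_t)\in\GSc$; then $\E(\Phi)=\I(\sigma)=\sigma m$, so by Proposition~\ref{prop.1}\eqref{prop.1.1} the pair $\PH(\Phi)=(|\phi|,\sigma/\|\phi\|_{L^2}^2)$ minimises $\Er$ on $\Mr_\sigma$. Choosing $\vartheta\in(1,\sigma_*/\sigma)$ keeps $\vartheta\sigma<\sigma_*$, so $\Ir(\vartheta\sigma)=\vartheta\sigma m$, and the dilation identity applied to $\PH(\Phi)$ gives $\vartheta\sigma m\le\vartheta\sigma m-\tfrac12(\vartheta-\vartheta^{-1})\bigl\|\,|\phi|'\,\bigr\|_{L^2}^2$, forcing $|\phi|'\equiv0$; since $|\phi|\in H^1(\R;\R)$ this means $|\phi|\equiv0$, hence $\Ch(\Phi)=0\ne\sigma$, a contradiction. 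The leftover work in (iv) is the bookkeeping for $C_2$ and the admissible range of $L$ in the plateau construction.
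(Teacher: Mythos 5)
Your proposal is correct, and its skeleton — the identity $\I(\sigma)=\Ir(\sigma)$, the spatial dilation identity \eqref{eq.rescaling}, and the emptiness argument obtained by applying that identity to $\PH(\Phi)$ for a putative minimum $\Phi$ with $0<\sigma<\sigma_*$ — coincides with the paper's, but several sub-arguments take a genuinely different, more self-contained route. For (i) the paper plugs $\|u\|_{L^2}^2=\sigma/m$, $\omega=m$ into \eqref{eq.41} and quotes \cite[Lemma~2.3]{Shi14} for $\inf_{S(\lambda)}(\Es)\le 0$, whereas you build the flattening test functions $a_k\psi(\cdot/k)$ explicitly; in the equality case of (ii) you replace the Gagliardo--Nirenberg step \eqref{eq.122} by the one-dimensional bound $\|u\|_{\infty}^2\le 2\|u\|_{L^2}\|u'\|_{L^2}$, which is an equivalent way to force $\int\G(u_n)\,dx\to 0$. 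For (iii) the paper re-uses the dilation inequality \eqref{eq.129} with a gradient bound $M$ uniform over nearby charge levels, while you rescale amplitudes, $\Phi\mapsto\sqrt{\sigma/\sigma_0}\,\Phi$, and invoke the Lipschitz-type estimate \eqref{prop.1.4} of Proposition~\ref{prop.1}; this is a legitimate alternative (in the spirit of \cite{GG17}) which actually puts that item of Proposition~\ref{prop.1} to work, at the price of needing the energy bound from (i) and coercivity \eqref{prop.1.3} to control $h_2$ along near-minimizers — which you do. For (iv) the paper cites \cite{GG17} to obtain $\Es<0$ on $S(\lambda)$ for large $\lambda$ and defines $\sigma_*$ as an infimum, whereas you derive the quantitative bound $\I(\sigma)\le C_2+\nu\sigma$ with $\nu=\sqrt{2\V(s_0)}/s_0<m$ from an optimized plateau function (as in Remark~\ref{rem.coercive}, without the hypothesis $\V(s_0)=0$) and define $\sigma_*$ as the supremum of the equality set, using the monotonicity of $\I(\sigma)/\sigma$ from (ii) together with continuity; the two bookkeepings are equivalent, and your plateau estimate is slightly more informative. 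The leftover work you flag is routine: the optimal plateau length is of order $\sigma/\sqrt{\V(s_0)}$, hence admissible for all large $\sigma$, which is all the argument needs.
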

\begin{proof}
\eqref{prop.2.1}. 
Firstly, we show that 
\begin{equation}
\label{eq.132}
\I(\sigma) = \Ir(\sigma).
\end{equation}
From the definition of $ \Er $ and $ \Chr $ in \eqref{eq.4} and \eqref{eq.5}, it
follows that $ \inf_{\M_\sigma}(\E)\leq \inf_{M_\sigma^*}(\Er) $.
The converse inequality follows from \eqref{prop.1.1} of Proposition~\ref{prop.1}.
Now, given $ (u,\omega) $ in $ M_\sigma^* $, from \eqref{eq.4}, \eqref{eq.49} and the definition of
$ \V $ in \eqref{eq.47}, we have
\begin{equation}
\label{eq.41}
\Er(u,\omega) = \frac{1}{2}\left(\frac{\sigma^2}{\|u\|_{L^2}^2} + m^2\|u\|_{L^2}^2\right) + \E_S(u).
\end{equation}
because $ \omega\|u\|_{L^2}^2 = \sigma $. We set $ \lambda := \frac{\sigma}{m} $. Then, if 
$ \|u\|_{L^2} ^2 = \frac{\sigma}{m} $ and $ \omega = m $, we obtain
$ E(u,\omega) = \sigma m + \E_S(u) $. From (i) of \cite[Lemma~2.3]{Shi14} 
$ \inf_{S(\lambda)}(\E_S) \leq 0 $. Therefore $ \Ir(\sigma)\leq\sigma m $.

\eqref{prop.2.2}.
Let $ (u_n,\omega_n) $ be a minimizing sequence of $ \Er $ on 
$ M_{\sigma}^* $. From \eqref{prop.1.3} of Proposition~\ref{prop.1}, the sequence $ (u_n,\omega_n) $ is bounded. Then
up to extract a subsequence we can suppose that there exist $ L $ such that 
\begin{equation}
\label{eq.128}
\lim_{n\to\infty} \|u_n'\|_{L^2}^2 = L_1,\quad\sup_{n\geq 1} \|u_n\|_{L^2}^2\leq L_0
\end{equation}
We define 
\[
u_{n,\vartheta} (x) := u_n (\vartheta\sp{-1} x).
\]
From the variable change $ y = \vartheta\sp{-1} x $ it follows that
\begin{gather}
\label{eq.45}
\|u_{n,\vartheta}\|_{L^2}^2 = \vartheta \|u_n\|_{L^2}^2\\
\nint\V(u_{n,\vartheta} (x))dx = \vartheta \nint\V(u_n(x))dx\\
\|u_{n,\vartheta}'\|_{L^2}^2 = \vartheta^{-1} \|u_n'\|_{L^2}^2.
\end{gather}
Then
\begin{equation}
\label{eq.rescaling}
\begin{split}
\Er(u_{n,\vartheta},\omega_n) &= \frac{1}{2}\omega_n^2\vartheta\|u_n\|_{L^2}^2  
+ \frac{1}{2}\vartheta^{-1} \|u_n'\|_{L^2}^2 + \vartheta\nint\V(u_n(x)) dx\\
&= \vartheta \Er(u_n,\omega_n) - \frac{1}{2}\left(\vartheta - \vartheta^{-1}\right)
\|u_n'\|_{L^2}^2.
\end{split}
\end{equation}
From \eqref{eq.45} $ (u_{n,\vartheta},\omega_n) $ belongs to $ \M_{\vartheta\sigma}^* $. From \eqref{eq.rescaling} and
\eqref{eq.128}, it follows that
\begin{equation}
\label{eq.129}
I(\vartheta\sigma)\leq \vartheta I(\sigma) -
\frac{1}{2}(\vartheta - \vartheta^{-1})\|u_n'\|_{L^2}^2 + o(1)\leq \vartheta I(\sigma) + o(1)
\end{equation}
because $ ((u_n,\omega_n)) $ is a minimizing sequence, proving the inequality. If the equality
$ I(\vartheta\sigma) = \vartheta I(\sigma) $ holds then either $ \vartheta = 1 $ or $ L_1 = 0 $.
Then, from the Sobolev-Gagliardo-Nirenberg inequality in dimension one we have
\begin{equation}
\label{eq.122}
\|u_n\|_{L^r}^r\leq s_{GN}^r\|u_n'\|_{L^2}^{\frac{r - 2}{2}}\|u_n\|_{L^2}^{\frac{r + 2}{2}}
\end{equation}
for $ r = p,q $. From \eqref{eq.128} and $ L_1 = 0 $ the sequences of $ L^p $ and $ L^q $ norms
of $ u_n $ converge to zero. Then, from \eqref{R3} and the estimate \eqref{eq.53}, it follows that
\[
\lim_{n\to\infty}\nint\G(u_n(x))dx = 0.
\]
Then, $ \E_S(u_n)\to 0 $. From \eqref{eq.41}
\[
\Er(u_n,\omega_n) = \frac{1}{2}\left(\frac{\sigma^2}{\|u_n\|_{L^2}^2} + m^2\|u_n\|_{L^2}^2\right) + o(1)\geq \sigma m + o(1).
\]
which implies $ I(\sigma) = \sigma m $ by \eqref{prop.2.2}.

\eqref{prop.2.4}. We fix $ \sigma_0 > 0 $. Let $ 0 < \sigma $ be such that 
$ |\sigma_0 - \sigma|\leq 1 $ and $ (u_n,\omega_n) $ be a minimizing sequence 
in $ M_{\sigma}^* $ such that $ \Er(u_n,\omega_n)\leq\I(\sigma) + 1 $.
From \eqref{prop.2} $ \I(\sigma)\leq (\sigma + 1)m $ for every 
$ 0 < \sigma\leq\sigma_0 + 1 $. Therefore $ \Er(u_n,\omega_n)\leq (\sigma + 1)m $
for every $ n\geq 1 $. By \eqref{prop.1.3} of Proposition~\ref{prop.1}
\[
\|u_n'\|_{L^2}^2\leq h_1 ((\sigma + 1)m)\leq\sup_{e\in [0,\sigma_0 + 1]} h_1 (e) := M.
\]
We remark that $ M $ depends only on $ \sigma_0 $.
Hereafter, we will assume that minimizing sequences fulfill this estimate which is uniform
with respect to the level. Let $ h $ be such that $ |h|\leq 1 $ .
We apply \eqref{eq.129} with $ \vartheta = (\sigma_0 + h)/\sigma_0 $ and 
$ \sigma = \sigma_0 $:
\begin{equation*}
\I(\sigma_0 + h)\leq \frac{\sigma_0 + h}{\sigma_0}\I(\sigma_0) + \frac{1}{2}\left|
\frac{\sigma_0 + h}{\sigma_0} - \frac{\sigma_0}{\sigma_0 + h}\right| M.
\end{equation*}
Then,
\begin{equation}
\label{eq.130}
\I(\sigma_0 + h) - \I(\sigma_0)\leq \frac{h}{\sigma_0}\I(\sigma_0) + \frac{1}{2}
\left|\frac{\sigma_0 + h}{\sigma_0} - \frac{\sigma_0}{\sigma_0 + h}\right|M.
\end{equation}
We apply \eqref{eq.129} with $ \vartheta = \sigma_0/(\sigma_0 + h) $ and 
$ \sigma = \sigma_0 + h $:
\[
\I(\sigma_0)\leq \frac{\sigma_0}{\sigma_0 + h}\I(\sigma_0 + h) + \frac{1}{2}\left|
\frac{\sigma_0 + h}{\sigma_0} - \frac{\sigma_0}{\sigma_0 + h}\right| M.
\]
Then
\begin{equation}
\label{eq.131}
\I(\sigma_0 + h) - \I(\sigma_0)\geq\frac{h}{\sigma + h}I(\sigma_0 + h) - \frac{1}{2}
\left|\frac{\sigma_0}{\sigma_0 + h} - \frac{\sigma_0 + h}{\sigma_0}\right|M.
\end{equation}
From \eqref{eq.130} and \eqref{eq.131} we obtain that 
$ \I(\sigma_0 + h) - \I(\sigma_0)\to 0 $ as $ h\to 0 $. 
Therefore $ \I $ is continuous at $ \sigma_0 $.

\eqref{prop.2.3}. From (iii) of \cite[Proposition~2]{GG17} and \eqref{R2}, it follows that 
there exist $ \lambda_* > 0 $ such that, for every $ \lambda > \lambda_* $, the functional 
$ \E_S $ achieves negative values on $ S(\lambda) $. Therefore, if $ \sigma > m\lambda_* $
we choose $ u $ in $ S(\frac{\sigma}{m}) $ such that $ \E_S(u) $ is negative. 
From \eqref{eq.41} we have $ \Er(u,m) = \sigma m + \E_S(u) < \sigma m $. Then, we can
define
\[
\sigma_* := \inf\{\sigma > 0\mid I(\sigma) < \sigma m\}.
\]
If $ \sigma_* = 0 $, then the proof is concluded. Then we look at the case $ \sigma_* > 0 $.
From \eqref{prop.2.4} $ \I(\sigma_*) = \sigma_* m $. 
If $ 0 < \sigma\leq\sigma_* $ from \eqref{prop.2.1} and \eqref{prop.2.2} it follows that 
$ \I(\sigma) = \sigma m $. Now suppose that $ \sigma_* > 0 $. We show that $ \GSc $ is empty if 
$ 0 < \sigma\leq\sigma_* $. On the contrary, let $ \Phi $ be a minimum of 
$ \E $ on $ \M_\sigma $. From \eqref{prop.1.1} of Proposition~\ref{prop.1} and \eqref{eq.132}
$ (u,\omega) := P(\Phi) $ is a minimum of $ \Er $ on 
$ M_\sigma^* $. We apply \eqref{eq.129} to the constant sequence of 
$ (u_n,\omega_n) := (u,\omega) $ and $ \vartheta := \frac{\sigma_*}{\sigma} $. Then
\[
\begin{split}
I(\sigma_*)
&\leq \frac{\sigma_*}{\sigma} \I(\sigma) - 
\frac{1}{2}\left(\frac{\sigma_*}{\sigma} - \frac{\sigma}{\sigma_*}\right)\|u'\|_{L^2}^2 \\
&= 
\sigma_* m - \frac{1}{2}\left(\frac{\sigma_*}{\sigma} - 
\frac{\sigma}{\sigma_*}\right)\|u'\|_{L^2}^2.
\end{split}
\]
Since $ \I(\sigma_*) = \sigma_* m $ we have either $ \sigma = \sigma_* $, already ruled out
by the assumptions of \eqref{prop.2.3}, or $ \|u'\|_{L^2}^2 = 0 $. Then $ u $ is the zero function on $ (-\infty,+\infty) $, a conclusion which contradicts $ \omega\|u\|_{L^2}^2 = \sigma > 0 $. 
\end{proof}
\begin{remark}
In \eqref{prop.2.3} the case $ \sigma = \sigma_* $ has been intentionally left open.
\end{remark}
\begin{lemma}
\label{lem.concentration-compactness}
Suppose that \eqref{R1}, \eqref{R2} and \eqref{R3} are satisfied.
Then there exist $ \sigma_* $ such that for every $ \sigma > \sigma_* $
and every sequence $ (\Phi_n) $ in $ \X $ satisfying
\[
\Ch(\Phi_n) \to\sigma,\quad \E(\Phi_n)\to I(\sigma),
\]
there exist 
$ \Phi $ in $ \X $ such that a subsequence of $ (\Phi_n(\cdot +y_n)) $ converges to $ \Phi $ in
the metric of $ \X $.
\end{lemma}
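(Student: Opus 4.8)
\emph{Strategy.} I would transfer the problem to the \emph{projected} functional $\Er$, run Lions' concentration--compactness trichotomy for $(u_n,\omega_n):=\PH(\Phi_n)$ using the strict subadditivity of $\Ir=\I$ from Proposition~\ref{prop.2}, and then lift the resulting compactness back to $\X$ through the equality case of the Cauchy--Schwarz step in \eqref{eq.10}. To begin, the bound $2\E(\Phi)\ge\min\{\mu,1\}\|\Phi\|_{\X}^2$, valid on all of $\X$ as in the proof of \eqref{prop.1.3}, shows $(\Phi_n)$ is bounded in $\X$. Since $\Ch(\Phi_n)\to\sigma>0$, setting $\lambda_n:=(\sigma/\Ch(\Phi_n))^{1/2}\to 1$ for $n$ large gives $\Ch(\lambda_n\Phi_n)=\sigma$ and, by \eqref{prop.1.4} and this boundedness, $|\E(\lambda_n\Phi_n)-\E(\Phi_n)|\le h_2(\lambda_n,1,\Phi_n)|\lambda_n-1|\to 0$; so we may assume $(\Phi_n)$ minimizes $(\E,\M_\sigma)$. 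Put $(u_n,\omega_n):=\PH(\Phi_n)=(|\phi_n|,\sigma\|\phi_n\|_{L^2}^{-2})$; by \eqref{prop.1.1} and \eqref{eq.132}, $\Ir(\sigma)\le\Er(u_n,\omega_n)\le\E(\Phi_n)\to\I(\sigma)=\Ir(\sigma)$, so $(u_n,\omega_n)$ minimizes $(\Er,M_\sigma^*)$; it is bounded by \eqref{prop.1.3}, and $\|u_n\|_{L^2}$ is bounded away from $0$ (else $\omega_n\to\infty$ and $\Er(u_n,\omega_n)\ge\tfrac12\omega_n\sigma\to\infty$), so along a subsequence $\omega_n\to\omega_\infty>0$. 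Let $\sigma_*$ be the number in part~\eqref{prop.2.3} of Proposition~\ref{prop.2}, so $\I(\sigma)<\sigma m$ for $\sigma>\sigma_*$.

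\emph{Trichotomy for $(u_n,\omega_n)$.} Apply Lions' lemma to $\rho_n:=|u_n'|^2+|u_n|^2$, with $\nint\rho_n\,dx\to\ell$ (and $\ell>0$, for the reason just used). \emph{Vanishing} is impossible: it forces $u_n\to 0$ in $L^r(\R)$ for all $r>2$, so $\nint\G(u_n)dx\to 0$ by \eqref{eq.53}, hence $\Er(u_n,\omega_n)\ge\tfrac12\big(\sigma^2\|u_n\|_{L^2}^{-2}+m^2\|u_n\|_{L^2}^2\big)+o(1)\ge\sigma m+o(1)$ and $\I(\sigma)\ge\sigma m$, contradicting $\sigma>\sigma_*$. \emph{Dichotomy} is impossible too: the standard cut-off construction gives $u_n=u_n^1+u_n^2$ with $\dist(\mathrm{supp}\,u_n^1,\mathrm{supp}\,u_n^2)\to\infty$, each $u_n^i$ carrying a fixed positive share of $\ell$; since $\omega_n$ is kept fixed, the charges $\sigma_i^{(n)}:=\Chr(u_n^i,\omega_n)=\omega_n\|u_n^i\|_{L^2}^2\ge 0$ satisfy $\sigma_1^{(n)}+\sigma_2^{(n)}\to\sigma$ and, along a subsequence, $\sigma_i^{(n)}\to\sigma_i$. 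If $\sigma_1,\sigma_2>0$, separation of supports makes all cross terms (in the gradient, the $\omega^2$-term, and $\nint\G(u_n)dx$) negligible, so $\Er(u_n,\omega_n)\ge\Er(u_n^1,\omega_n)+\Er(u_n^2,\omega_n)+o(1)\ge\I(\sigma_1^{(n)})+\I(\sigma_2^{(n)})+o(1)\to\I(\sigma_1)+\I(\sigma_2)$ by continuity \eqref{prop.2.4}; but \eqref{prop.2.2} gives $\I(\sigma_i)\ge(\sigma_i/\sigma)\I(\sigma)$, and equality there for both $i$ would force $\I(\sigma_i)=\sigma_i m$, i.e. $\I(\sigma_1)+\I(\sigma_2)=\sigma m>\I(\sigma)$; hence $\I(\sigma_1)+\I(\sigma_2)>\I(\sigma)$ in every case, a contradiction. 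If instead $\sigma_1=0$, then $\|u_n^1\|_{L^2}\to 0$, so the positive share of $\ell$ held by $u_n^1$ lies entirely in $\|(u_n^1)'\|_{L^2}^2$, whence $\Er(u_n^1,\omega_n)\ge c>0$ while $\Er(u_n^2,\omega_n)\ge\I(\sigma_2^{(n)})\to\I(\sigma)$, giving $\I(\sigma)\ge c+\I(\sigma)$ (the case $\sigma_2=0$ is symmetric). Therefore \emph{compactness} holds: there are $y_n\in\R$ with $(\rho_n(\cdot+y_n))$ tight. With $\tilde u_n:=u_n(\cdot+y_n)$, still minimizing by translation invariance of $\Er$ and $\Chr$, boundedness gives $\tilde u_n\wk u$ in $H^1$, upgraded by tightness to $\tilde u_n\to u$ in $L^r(\R)$ for all $r\in[2,\infty)$; so $\|u\|_{L^2}^2=\lim\|u_n\|_{L^2}^2>0$ and $(u,\omega_\infty)\in M_\sigma^*$. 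Weak lower semicontinuity of the $L^2$-norm of the derivative and strong convergence of the remaining terms (the potential one via \eqref{eq.53}) give $\Er(u,\omega_\infty)\le\liminf\Er(\tilde u_n,\omega_n)=\Ir(\sigma)$, so $(u,\omega_\infty)$ is a minimizer, all inequalities are equalities, and $\tilde u_n\to u$ in $H^1$.

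\emph{Lifting to $\X$.} Set $\psi_n:=\phi_n(\cdot+y_n)$, $\chi_n:=\phi_{n,t}(\cdot+y_n)$, so $|\psi_n|=\tilde u_n\to u$ in $H^1$. In \eqref{eq.10}, $\E(\Phi_n)-\Er(u_n,\omega_n)$ is the sum of the two nonnegative defects $\tfrac12\big(\|\chi_n\|_{L^2}^2-\omega_n^2\|\psi_n\|_{L^2}^2\big)$ and $\tfrac12\big(\|\psi_n'\|_{L^2}^2-\||\psi_n|'\|_{L^2}^2\big)$, and it tends to $0$; hence the Cauchy--Schwarz defect $\|\chi_n\|_{L^2}^2-\omega_n^2\|\psi_n\|_{L^2}^2\to 0$, which, decomposing $\chi_n$ along $i\psi_n$ and its orthogonal complement and using $\Ch(\Phi_n)=\sigma$, forces $\chi_n+i\omega_n\psi_n\to 0$ in $L^2$. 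Passing to a subsequence, $\psi_n\wk\psi$ in $H^1$, hence a.e., so $|\psi|=u$; tightness of $|\psi_n|^2$ upgrades this to $\psi_n\to\psi$ in $L^2(\R)$, whence $\chi_n\to -i\omega_\infty\psi$ in $L^2$ and $\Ch(\psi,-i\omega_\infty\psi)=\omega_\infty\|\psi\|_{L^2}^2=\sigma$. Finally,
\[
\I(\sigma)=\lim_n\E(\Phi_n)\ge\tfrac12\omega_\infty^2\|\psi\|_{L^2}^2+\tfrac12\|\psi'\|_{L^2}^2+\nint\V(\psi(x))dx=\E(\psi,-i\omega_\infty\psi)\ge\I(\sigma),
\]
using strong $L^2$-convergence of $\chi_n$, convergence of the potential term along $|\psi_n|=\tilde u_n\to u$, and weak lower semicontinuity of the $L^2$-norm of $\psi_n'$; equality forces $\|\psi_n'\|_{L^2}\to\|\psi'\|_{L^2}$, so $\psi_n\to\psi$ in $H^1$ and $\Phi_n(\cdot+y_n)\to\Phi:=(\psi,-i\omega_\infty\psi)$ in $\X$. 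The delicate point is the dichotomy: arranging the cut-offs so that every cross term is genuinely $o(1)$, ensuring the two charges converge, and treating the degenerate sub-case of a piece with vanishing $L^2$-mass (hence zero charge), where one must exploit the leftover kinetic energy rather than the subadditivity of $\I$; the lifting step is comparatively soft once the equality analysis of \eqref{eq.10} is recorded.
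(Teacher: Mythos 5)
Your proposal is correct in substance, but it follows a genuinely different route from the paper. The paper does not run Lions' trichotomy at all: it observes that, along a subsequence, $\|\phi_n\|_{L^2}^2\to\lambda$ and that $(\phi_n)$ is (essentially) a minimizing sequence for the Schr\"odinger-type problem $(\Es,S(\lambda))$ with $\inf_{S(\lambda)}\Es<0$ (negativity coming from $\I(\sigma)<\sigma m$ for $\sigma>\sigma_*$), and then invokes the already established concentration--compactness for that setting, \cite[Lemma~2.3]{GG17}, applied directly to the \emph{complex-valued} sequence $\phi_n$; this gives $\phi_n(\cdot+y_n)\to\phi$ strongly in $H^1(\R;\C)$ at once, and the time-derivative component is then recovered by a Brezis--Lieb expansion of the energy \cite{BL83}, which pins down $\phi_{n,t}(\cdot+y_n)\to\phi_t$ in $L^2$. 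You instead redo the trichotomy from scratch for the projected problem $(\Er,M_\sigma^*)$, ruling out vanishing via the $\sigma m$ threshold and dichotomy via the strict subadditivity encoded in \eqref{prop.2.2} of Proposition~\ref{prop.2} (including the correct treatment of a zero-charge piece through its residual kinetic energy), and then you must lift compactness of $|\phi_n|$ back to $\X$ through the equality cases in \eqref{eq.10}: the Cauchy--Schwarz defect forces $\phi_{n,t}+i\omega_n\phi_n\to0$ in $L^2$, and tightness plus weak $H^1$ convergence identifies the limit $\psi$ with $|\psi|=u$ and yields $H^1$ convergence by norm pinching. What your route buys is self-containedness (no appeal to the quoted NLS lemma) and an explicit display of where subadditivity enters; what it costs is the full Lions bookkeeping in the dichotomy step (cutoff cross terms, convergence of the split charges), which you correctly flag as the delicate point, plus the extra phase-recovery step that the paper's direct treatment of the complex sequence avoids. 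Both arguments use the same structural inputs (Proposition~\ref{prop.1}, the threshold $\sigma_*$ from \eqref{prop.2.3} of Proposition~\ref{prop.2}, and a defect/Brezis--Lieb-type analysis for the $\phi_t$ component), so the proposal is a legitimate, if longer, alternative proof.
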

\begin{proof}
For $ \sigma_* $ we choose the one defined in \eqref{prop.2.3} of Proposition~\ref{prop.2}.
We set $ \Phi_n := (\phi_n,\phi_{n,t}) $ and $ (u_n,\omega_n) := P(\Phi_n) $.
Since the sequence of $ \Er(u_n,\omega_n) = E(\Phi_n) $ is bounded, both $ (\phi_n,\phi_{n,t}) $ and $ (u_n,\omega_n) $ 
are bounded by \eqref{prop.1.3} of Proposition~\ref{prop.1}. Therefore, 
$ (u_n,\omega_n) := P(\Phi_n) $ is bounded as well. Up to extract a subsequence, we can
suppose that there exist $ \lambda $ such that 
\[
\|u_n\|_{L^2}^2\to\lambda,\quad\omega_n\to\frac{\sigma}{\lambda}.
\]
We can also suppose that $ \Es(\phi_n) $ converges to $ \inf_{S(\lambda)} (\E_S) $. Otherwise, given
$ \psi $ in $ S(\lambda) $ such that $ \Es(\psi) < \liminf_{n\to\infty}\Es(\phi_n) $, from \eqref{eq.41}
it would follow that $ \Er(\phi,\sigma/\lambda) < \I(\sigma) $. Moreover, $ \inf_{S(\lambda)}(\E_S) < 0 $. Otherwise,
from \eqref{eq.41}, we would obtain $ I(\sigma)\geq\sigma m $, which contradicts the properties of
$ \sigma_* $ established in \eqref{prop.2.3} of Proposition~\ref{prop.2}. Therefore, we can apply concentration properties of the variational setting $ (\Es,S(\lambda)) $ provided in \cite[Lemma~2.3]{GG17}. 
Then there exist $ (y_n) $ in $ \R $ and $ \phi $ in $ H^1(\R;\C) $ such that, up
to extract a subsequence,
\begin{equation}
\label{eq.110}
\phi_n (\cdot + y_n)\to\phi\text{ in } H^1 (\R;\C).
\end{equation}
We claim that a subsequence of $ \phi_{n,t}(\cdot + y_n) $ converges strongly in $ L^2 $.
In fact, up to extract a subsequence, there exist $ \phi_t $ in $ L^2 $ such that
$ \phi_{n,t}(\cdot + y_n)\wk\phi_t $ in $ L^2 (\R;\C) $. From \eqref{eq.110} it 
follows that
\begin{equation*}
\sigma = \lim_{n\to\infty} C(\phi_{n},\phi_{n,t}) = 
\lim_{n\to\infty} C(\phi_{n}(\cdot + y_n),\phi_{{n},t} (\cdot + y_n)) = 
C(\phi,\phi_{t}).
\end{equation*}
Therefore $ (\phi,\phi_t) $ is in $ \M_\sigma $. 
Now,
\[
\begin{split}
\I(\sigma) &= E(\phi_{n},\phi_{{n},t}) + o(1) = 
\E(\phi_n(\cdot + y_n),\phi_{n,t}(\cdot + y_n)) + o(1) \\
&=  
\frac{1}{2}\|\phi_{{n},t}(\cdot + y_n) - \phi_{t}\|_{L^2}^2 + \E(\phi,\phi_t) + o(1)\\ 
&+ \frac{m^2}{2}\|\phi_{{n}}(\cdot + y_n) - \phi\|_{L^2}^2 
+ \E_S (\phi_n(\cdot + y_n) - \phi)\\
&\geq \frac{1}{2}\|\phi_{{n},t}(\cdot + y_n) - \phi_{t}\|_{L^2}^2 + \I(\sigma) + o(1).
\end{split}
\]
The third equality follows \cite[Theorem~2]{BL83} and \eqref{eq.49}. The fourth inequality
follows \eqref{R1} and the fact that $ (\phi,\phi_{t}) $ belongs to $ M_\sigma $.
Taking the limit, we obtain
\[
\lim_{n\to\infty}\|\phi_{n,t}(\cdot + y_n) - \phi_{t}\|_{L^2}^2 = 0,
\]
which, together with \eqref{eq.110}, proves convergence stated in the lemma.
\end{proof}
\begin{remark}
In \cite{BF14}, the author devised an abstract framework to prove the concentration-compactness
of minimizing sequences of several variational problem. On this occasion we could not apply the result of \cite[Theorem~21]{BF14} to prove Lemma~\ref{lem.concentration-compactness} because of the lack of the assumption \cite[(EC-3).(i)]{BF14}, requiring $ \Ch(\Phi)\neq 0 $ whenever $ \Phi\neq 0 $. One can choose $ \Phi = (\phi,\phi) $ with $ \phi $ in $ H^1(\R;\R) $ as an example.
\end{remark}
\begin{cor}
\label{thm.min}
For every $ \sigma > \sigma_* $ the set $ \GSc $ is non-empty.
\end{cor}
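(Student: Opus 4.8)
The plan is to apply Lemma~\ref{lem.concentration-compactness} directly to a minimizing sequence of $\E$ on $\M_\sigma$. First I would record that, for $\sigma>0$, the quantity $I(\sigma)=\inf_{\M_\sigma}(\E)$ is a finite real number and that $\M_\sigma$ is non-empty, so that minimizing sequences exist: by \eqref{prop.1.2} of Proposition~\ref{prop.1} the energy $\E$ is bounded below on $\M_\sigma$, by \eqref{prop.2.1} of Proposition~\ref{prop.2} one has $I(\sigma)\leq\sigma m<+\infty$, and for any non-zero $\phi$ in $H^1(\R;\C)$ the vector $(\phi,-i\omega\phi)$ with $\omega:=\sigma\|\phi\|_{L^2}^{-2}$ lies in $\M_\sigma$ by \eqref{eq.5}.

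Fix therefore a sequence $(\Phi_n)$ in $\M_\sigma$ with $\E(\Phi_n)\to I(\sigma)$. Since $\Ch(\Phi_n)=\sigma$ for every $n$, the hypotheses of Lemma~\ref{lem.concentration-compactness} are met (here we use $\sigma>\sigma_*$, with $\sigma_*$ the threshold of \eqref{prop.2.3} of Proposition~\ref{prop.2}), and the lemma produces $(y_n)\subseteq\R$ and a vector $\Phi$ in $\X$ such that, after passing to a subsequence, $\Phi_n(\cdot+y_n)\to\Phi$ in the metric of $\X$.

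It remains to identify $\Phi$ as an element of $\GSc$. By the invariances \eqref{eq.77} one has $\Ch(\Phi_n(\cdot+y_n))=\Ch(\Phi_n)=\sigma$ and $\E(\Phi_n(\cdot+y_n))=\E(\Phi_n)\to I(\sigma)$ along the same subsequence. Since $\E$ and $\Ch$ are continuous on $\X$ by \eqref{prop.1.5} of Proposition~\ref{prop.1}, passing to the limit gives $\Ch(\Phi)=\sigma$ and $\E(\Phi)=I(\sigma)$. Thus $\Phi\in\M_\sigma$ and $\E(\Phi)=\inf_{\M_\sigma}(\E)$, that is $\Phi\in\GSc$, whence $\GSc\neq\emptyset$.

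I do not expect a genuine obstacle at this stage: the substantive analytic work—the concentration and strong compactness of the translated minimizing sequence, including the passage from strong $H^1$-convergence of the $\phi_n(\cdot+y_n)$ to strong $L^2$-convergence of the $\phi_{n,t}(\cdot+y_n)$—is entirely contained in Lemma~\ref{lem.concentration-compactness}. The only points to be careful about are the existence of a minimizing sequence and the invariance of both the constraint value and the energy value under the translations $y_n$ and the subsequent limit, all of which are supplied by Propositions~\ref{prop.1} and \ref{prop.2} together with \eqref{eq.77}.
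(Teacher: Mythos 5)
Your proposal is correct and follows essentially the same route as the paper: apply Lemma~\ref{lem.concentration-compactness} to a minimizing sequence, use the translation invariance of $\E$ and $\Ch$, and pass to the limit by continuity (Proposition~\ref{prop.1}) to identify the limit as a minimum. The extra preliminary checks you make (non-emptiness of $\M_\sigma$ and finiteness of $I(\sigma)$) are harmless additions the paper leaves implicit.
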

\begin{proof}
Let $ \Phi_n := (\phi_n,\phi_{n,t}) $ be a minimizing sequence in $ \M_\sigma $. Since $ \sigma > \sigma_* $
the assumptions of Lemma~\ref{lem.concentration-compactness} are
satisfied. Then, there exist $ (y_n) $ in $ \R $
and $ \Phi : = (\phi,\phi_t) $ in $ \M_\sigma $ such that, up to extract
a subsequence, $ \Phi_n(\cdot + y_n)\to \Phi\text{ in } \X $.
From \eqref{prop.1.5} of Proposition~\ref{prop.1}, 
\[
I(\sigma) = \E(\Phi_n) = \E(\Phi_n(\cdot + y_n))\to \E(\Phi).
\]
Then $ \Phi $ is a minimum of $ \E $ on $ \M_\sigma $. 
\end{proof}
\begin{theorem}
\label{thm.minima-structure}
For every $ \sigma > \sigma_* $, given $ \Phi $ in $ \GSc $
there exist $ m > \omega > 0 $, a radially decreasing function $ R $ of class 
$ H^1 _{r,+} (\R;\R)\cap\Gamma_\sigma $ and $ (y,z) $ in $ \R\times S^1 $ such that
\[
\Phi = (zR(\cdot + y),-i\omega z R(\cdot + y)).
\]
\end{theorem}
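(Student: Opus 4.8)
The plan is to start from a minimizer $\Phi=(\phi,\phi_t)$ of $\E$ on $\M_\sigma$ and run it through the map $P$ of \eqref{eq.11}, exploiting the chain of inequalities in \eqref{eq.10} to force every inequality there to be an equality. By \eqref{prop.1.1} of Proposition~\ref{prop.1} and \eqref{eq.132}, $(u,\omega):=P(\Phi)=(|\phi|,\omega)$ with $\omega=\sigma/\|\phi\|_{L^2}^2$ lies on $\M_\sigma^*$ and satisfies $\Er(u,\omega)=\I^*(\sigma)=\I(\sigma)=\E(\Phi)$, so $(u,\omega)$ is a minimizer of $\Er$ on $\M_\sigma^*$; in particular the inequality in \eqref{eq.10} is saturated. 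The two places where \eqref{eq.10} loses information are the Convex Inequality for the gradient, which becomes an equality only when $\phi(x)=e^{i\theta}|\phi(x)|$ for a constant phase $\theta$ (on the set where $\phi\neq0$, and using connectedness of $\R$), and the Cauchy–Schwarz step applied to $f=i\phi$, $g=\phi_t$, which becomes an equality only when $\phi_t=ci\phi$ for a real constant $c$; matching charges then forces $c=-\omega$. Thus $\phi=zv$ for some $z\in S^1$ and $v=|\phi|\geq0$ in $H^1$, and $\phi_t=-i\omega\,\phi$, i.e.\ $\Phi=(zv,-i\omega zv)$ with $(v,\omega)=P(\Phi)$ a minimizer of $\Er$.

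Next I would upgrade $v$ to a positive, radially decreasing profile. Since $(v,\omega)$ minimizes $\Er$ on $\M_\sigma^*$ and, by \eqref{eq.41}, $\Er(v,\omega)=\tfrac12(\sigma^2/\|v\|_{L^2}^2+m^2\|v\|_{L^2}^2)+\E_S(v)$, fixing $\lambda:=\|v\|_{L^2}^2$ shows that $v$ must minimize $\E_S$ on $S(\lambda)$; moreover $\omega=\sigma/\lambda$ must be the value of $\|\cdot\|_{L^2}^2$ that minimizes $t\mapsto\tfrac12(\sigma^2/t+m^2t)$ along the minimizing values of $\E_S$, and in any case $\E_S(v)=\inf_{S(\lambda)}(\E_S)<0$ from the argument already used in Lemma~\ref{lem.concentration-compactness}. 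For the structure of minimizers of $\E_S$ on $S(\lambda)$ I would invoke the results of \cite{BJM09} quoted in the introduction: the Schwarz symmetric rearrangement $v^*$ does not increase $\E_S$ (it preserves the $L^2$ norm and the potential term, and does not increase $\|v'\|_{L^2}$, strictly unless $v$ is already a translate of a symmetric decreasing function), so $R:=v^*$ is again a minimizer, $R\in H^1_{r,+}(\R;\R)$ is even and radially decreasing, and $v(\cdot)=R(\cdot+y)$ for some $y\in\R$ up to translation. Plugging back gives $\Phi=(zR(\cdot+y),-i\omega zR(\cdot+y))$.

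Finally I would pin down $0<\omega<m$. The bound $\omega>0$ is immediate since $\omega=\sigma/\lambda$ with $\sigma>0$ and $\lambda=\|R\|_{L^2}^2>0$ (were $R\equiv0$ the charge would vanish, contradicting $\sigma>0$). For $\omega<m$: since $(v,\omega)\in\M_\sigma^*$ is a minimizer, $\Er(v,\omega)=\I(\sigma)<\sigma m$ for $\sigma>\sigma_*$ by \eqref{prop.2.3} of Proposition~\ref{prop.2}; but from \eqref{eq.41}, $\Er(v,\omega)=\tfrac12(\sigma^2/\lambda+m^2\lambda)+\E_S(v)\geq\sigma m+\E_S(v)$ by AM–GM, with equality in AM–GM exactly when $\lambda=\sigma/m$, i.e.\ $\omega=m$. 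If $\omega\geq m$ then $\lambda\leq\sigma/m$, and I would show this is incompatible with $\Er(v,\omega)<\sigma m$ together with the fact that the scaling in \eqref{eq.rescaling}–\eqref{eq.129} already constrains $\|v'\|_{L^2}$; more directly, the Euler–Lagrange equation for $(v,\omega)$ on $\M_\sigma^*$ reads $-v''+(m^2-\omega^2)v+G'(v)=0$, and integrating against $v$ or using a Pohozaev-type identity in dimension one, together with $G(s_0)<0$ for some $s_0$ and $\E_S(v)<0$, forces $m^2-\omega^2>0$. I expect this last step — extracting $\omega<m$ cleanly — to be the main obstacle, since it is the only place that genuinely uses the subcriticality and the sign condition \eqref{R2} rather than just rearrangement and convexity; the natural route is the one-dimensional Pohozaev/Nehari identity satisfied by the minimizer $R$, which gives $\tfrac12\|R'\|_{L^2}^2=\nint(-\V(R)+\tfrac12 m^2 R^2 +\tfrac12\omega^2 R^2)\,dx$-type relations pinning the sign of $m^2-\omega^2$.
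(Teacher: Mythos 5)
Your plan follows the paper for the phase and the $\phi_t=-i\omega\phi$ identification, but the statement's quantitative core --- $\omega<m$ --- is exactly the step you leave open, and it is a genuine gap rather than a deferrable detail. The paper closes it by the route you only gesture at: since $(u,\omega):=P(\Phi)$ minimizes $\Er$ on $M_\sigma^*$, testing the Lagrange condition against $(0,1)$ shows the multiplier equals $\omega$, so $u$ solves $u''-(m^2-\omega^2)u-G'(u)=0$; multiplying by $2u'$ gives the pointwise identity $u'^2=(m^2-\omega^2)u^2+2G(u)$ (the integration constant vanishes because all terms are $L^1$), and integrating this turns the energy into $\Er(u,\omega)=\|u'\|_{L^2}^2+\omega^2\|u\|_{L^2}^2$. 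Since $\sigma>\sigma_*$ gives $I(\sigma)<\sigma m=m\omega\|u\|_{L^2}^2$ by \eqref{prop.2.3} of Proposition~\ref{prop.2}, one reads off $\omega<m$ immediately. Your AM--GM attempt cannot work, because $\E_S(v)<0$ leaves $\Er(v,\omega)\geq \sigma m+\E_S(v)$ compatible with $\Er<\sigma m$ for any $\omega$; and the identity you finally write down has the wrong signs (the correct one is $\tfrac12\|R'\|_{L^2}^2=\nint\big(\V(R(x))-\tfrac12\omega^2R(x)^2\big)dx$, equivalently $\tfrac12\|R'\|_{L^2}^2=\nint\big(G(R(x))+\tfrac12(m^2-\omega^2)R(x)^2\big)dx$, not $-\V(R)+\dots$). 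Note also that the sign information does not come from \eqref{R2} or subcriticality alone, but from combining the Pohozaev identity with the strict inequality $I(\sigma)<\sigma m$, i.e.\ with $\sigma>\sigma_*$.

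Two further steps are weaker than you suggest. For the profile, you replace the paper's ODE argument (elliptic regularity plus \cite[Theorem~5]{BL83a}, which identifies $u$ as a translate of a positive, even, strictly decreasing solution of \eqref{eq.56}) by Schwarz rearrangement; but the strictness claim ``does not increase $\|v'\|_{L^2}$, strictly unless $v$ is already a translate of a symmetric decreasing function'' is false in general (equality in P\'olya--Szeg\H{o} can hold for non-translates, e.g.\ profiles with plateaus), so rearrangement alone produces \emph{some} symmetric minimizer but not the identification $v=R(\cdot+y)$, nor strict positivity and strict radial decrease of $R$; you would still need the ODE/uniqueness input to finish. For the constant phase, equality in the convex inequality for the gradient yields a single $z\in S^1$ only once you know $|\phi|>0$ everywhere --- otherwise the phase may differ on different components of $\{\phi\neq 0\}$, and the converse stated in \cite{LL01} is not directly applicable; this is why the paper first proves $|\phi|=R(\cdot-y)>0$ and only then invokes \cite[Lemma~5.1]{Gar12}. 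Your order of steps (phase first, positivity later) leaves that extraction unjustified.
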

\begin{proof}
Let $ \Phi $ be a minimum of $ \E $ on $ \M_\sigma $. We use the notation $ (u,\omega) := P(\Phi) $. By
definition of $ P $, the function $ u $ is non-negative.
From \eqref{prop.1.1} of Proposition~\ref{prop.1}, the point $ (u,\omega) $ belongs
to $ M_\sigma $. Then $ \Ir(\sigma)\leq\Er(u,\omega)\leq \E(\Phi) = \I(\sigma) $. 
From \eqref{eq.132} $ \I(\sigma) = \Ir(\sigma) $.
Therefore $ (u,\omega) $ is a minimum of $ \Er $ on $ M_\sigma^* $. Thus,
there exist $ \eta $ such that $ d\Er(u,\omega) = \eta d\Chr(u,\omega) $. 
We apply this equality between linear functionals to vectors of the form $ (v,0) $ and $ (0,1) $ and obtain
\begin{gather*}
\displaystyle\nint u'v'dx + (m^2 + \omega^2) \nint uvdx + \nint\G'(u)vdx  = 2\eta\omega\nint uvdx\\
\displaystyle\omega\|u\|_{L^2}^2 = \eta\|u\|_{L^2}^2.
\end{gather*}
Since $ \Chr(u,\omega) = \sigma > 0 $, the $ L^2 $ norm of $ u $ is different from zero. Therefore, from the
second equation we obtain $ \eta = \omega $; after the substitution in the first equation
we obtain
\begin{equation*}
\displaystyle\nint u' v'dx + (m^2 - \omega^2) \nint uvdx + \nint\G'(u)vdx  = 0
\end{equation*}
for every $ v $ in $ H^1 (\R;\R) $. By elliptic regularity,
\begin{equation}
\label{eq.56}
u''(x) - (m^2 - \omega^2) u(x) - \G'(u(x)) = 0.
\end{equation}
We multiply the equation by $ 2u' $. Then, there exist $ d $ in $ \R $ such that 
\begin{equation}
\label{eq.24}
u'(x)\sp 2 - (m^2 - \omega^2) u(x)\sp 2 - 2\G(u(x))\equiv d.
\end{equation}
On the left side we have a sum of $ L^1 $
functions. Therefore $ d = 0 $. Integrating
on $ \mathbb{R} $, we obtain
\begin{equation*}
\int_{-\infty}^{+\infty} u'(x)\sp 2 dx - (m^2 - \omega^2)\nint u(x)^2 dx - 2\int_{-\infty}^{+\infty} \G(u(x))dx = 0.
\end{equation*}
Since $ (u,\omega) $ is a minimum, the equality above becomes
\[
\nint u'(x)^2 dx + \omega^2\nint u(x)^2dx = \Er(u,\omega) = I(\sigma).
\]
By \eqref{prop.2.3} of Proposition~\ref{prop.2}, we have
\[
\nint u'(x)^2 dx + \omega^2\nint u(x)^2dx < \sigma m = m\omega\nint u(x)^2dx.
\]
Therefore $ \omega < m $. Since $ u $ is $ H\sp 1 $ it is also $ L\sp{\infty} $.
From \eqref{eq.24} and the continuity of $ \G $, the function
$ |u'| $ is bounded. Since $ u $ is in $ L^2 $, we have
\begin{equation*}
\lim_{|x|\to\infty} u (x) = 0.
\end{equation*}
Since $ u $ is also $ C^2 (\R) $, it satisfies 
condition \cite[(6.1)]{BL83a}. From \eqref{R3}, the differential
equation \eqref{eq.24} satisfies \cite[(6.2)]{BL83a} of \cite[Theorem~5]{BL83a}.
Therefore, according to the quoted theorem, there exist $ y $ in $ \R $ such 
that
\begin{equation}
\label{eq.57}
u(x + y) = R(x)
\end{equation}
where $ R $ solves \eqref{eq.56}, is positive, even and decreasing with respect to
the origin. That is $ R $ is in $ H^1 _{r,+} (\R;\R) $. 
Since $ \E(\Phi) = \Er(u,\omega) = I(\sigma) $, in
\eqref{eq.10} we have a chain of inequalities with the same values at the
endpoints. Therefore, all the intermediate inequalities are equalities. Then, we have
\begin{equation*}
\nint |\phi'(x)|^2 dx = \nint ||\phi|' (x)|^2dx.
\end{equation*}
This is a particular case of the Convex Inequality for Gradient \cite[Theorem~7.8,\ p.,\ 177]{LL01}, where the equality holds.
In \cite[Lemma~5.1]{Gar12}, we showed that if $ |\phi| $ is continuous and positive everywhere, there exist a complex
number $ z $ such that $ |z| = 1 $ and 
\begin{equation}
\label{eq.58}
\phi(x) = z|\phi(x)|.
\end{equation}
Comparing the first and second line of \eqref{eq.10}, we also obtain the equality
\begin{equation*}
\|\phi_t\|_{L^2} ^2 = \frac{|\Ch(\Phi)|^2}{\|\phi\|_{L\sp 2}\sp 2}.
\end{equation*}
This is a case of the Cauchy-Schwarz inequality where the equality holds. In fact, it reads
\[
\|\phi_t\|_{L^2} ^2 \|i\phi\|_{L^2}^2 = |(i\phi,\phi_t)_{L^2}|^2
\]
Therefore, since $ \phi\neq 0 $, there exist $ \beta $ in $ \R $ such that 
\begin{equation}
\label{eq.113}
\phi_t(x) = i\beta\phi(x)
\end{equation}
for every $ x $ in $ (-\infty,+\infty) $. Also, since $ \Ch(\Phi) = \Chr(|\phi|,\omega) = \sigma $, we have
\begin{equation}
\label{eq.59}
-\text{Im}\nint\phi_t\overline{\phi} dx = \omega\text{Re}\nint\phi\overline{\phi}dx.
\end{equation}
From \eqref{eq.113}, we obtain 
\[
\begin{split}
&\text{Im}\nint\phi_t\overline{\phi} dx = \text{Im}\nint i\beta\phi\overline{\phi}dx \\
= &\beta\text{Im}\,i\nint\phi\overline{\phi}dx = \beta\text{Re}\nint\phi\overline{\phi}dx.
\end{split}
\]
From the equality above and \eqref{eq.59}, we obtain $ \beta = -\omega $. Therefore,
\begin{equation}
\label{eq.60}
\phi_t (x) = -i\omega\phi(x)
\end{equation}
for every $ x $ in $ (-\infty,+\infty) $. From \eqref{eq.58}, \eqref{eq.113}, \eqref{eq.60} and \eqref{eq.57}, it follows
\[
\begin{split}
\Phi(x) &= (\phi(x),\phi_t (x)) = (\phi(x),-i\omega\phi(x)) \\
&= (zu(x),-i\omega zu(x)) = (zR(x + y),-i\omega zR(x + y))
\end{split}
\]
which concludes the proof.
\end{proof}
\begin{remark}
In order to obtain \eqref{eq.58}, we did not use the converse of the Convex Inequality for Gradients of \cite{LL01}. In fact, in the version provided by the authors, it is required that either $ \text{Re}(\phi) $ is positive everywhere
or $ \text{Im}(\phi) $ is positive everywhere, an information that it is not available at this point of the proof, even
though it is true \textsl{a posteriori}. However, here we know that 
$ \text{essinf}_\Omega|\phi| > 0 $ 
for every bounded set $ \Omega $. A proof of the equality \eqref{eq.58} with this assumption (instead of $ \mathrm{Re}(\phi) > 0 $) is in \cite[Lemma~5.1]{Gar12}. Another proof has been
provided in \cite[Proof~of~Theorem~2.8]{BBBM10} using a lifting map, that is a function $ S $ such that $ \phi/|\phi| = e^{iS(x)} $ with $ S $ in $ W^{1,1} _{loc} (\R^n) $, \cite{BBM00}.
We preferred to rely on \cite[Lemma~5.1]{Gar12}, because using the regularity of $ \phi/|\phi| $ directly seemed to us more straightforward.
\end{remark}
We conclude this section with the proof of Theorem~\ref{thm.ground-state-stability}.
\begin{proof}[Proof~of~Theorem~\ref{thm.ground-state-stability}]
Suppose that $ \Gamma_\sigma $ is not stable. Then there are sequences 
$ (\Phi_n) $, $ (t_n) $ and $ \varepsilon_0 > 0 $ such that
\begin{equation}
\label{eq.135}
\mathrm{dist}(\Phi_n,\Gamma_\sigma)\to 0,\quad
\mathrm{dist}(U(t_n,\Phi_n),\Gamma_\sigma)\geq\varepsilon_0.
\end{equation}
We set $ \Psi_n := U(t_n,\Phi_n) $.
By \eqref{prop.1.5} of Proposition~\ref{prop.1} $ \E(\Phi_n)\to\I(\sigma) $ and
$ \Ch(\Phi_n)\to\sigma $. From \eqref{eq.101}, we have $ \E(\Psi_n) = \E(\Phi_n) $ and 
$ \Ch(\Psi_n)) = \Ch(\Phi_n) $. Therefore, $ \E(\Psi_n))\to\I(\sigma) $ and
$ \Ch(\Psi_n))\to\sigma $. By Lemma~\ref{lem.concentration-compactness}, up to extract
a subsequence, we can suppose that there exist $ (y_n)\subseteq\R $ and $ \Psi $ in 
$ \Gamma_\sigma $ such that $ \Phi_n(\cdot + y_n)\to\Psi $. Therefore, 
$ \|\Psi_n - \Phi(\cdot - y_n)\|_\X\to 0 $ implying that $ \dist(\Psi_n,\Gamma_\sigma)\to 0 $
and giving a contradiction with \eqref{eq.135}.
\end{proof}
\section{The double power case: the cardinality of $ K_\sigma $}
\label{sect.du}
So far our conclusions hold for general assumptions on the non-linearity $ \G $. 
In this section we address the double power non-linearity \eqref{eq.DP}.
In this section, for every $ \sigma $ we count the number of minima of $ \E $ constrained to 
$ \M_\sigma $ which are real-valued and radially decreasing. We set: 
\begin{equation*}
M_{\sigma,r}^* :=  \{(u,\omega)\in\R\times H^1_r (\R;\R)\mid\Chr(u,\omega) = \sigma\}.
\end{equation*}
\begin{equation*}
\Ers\colon M_{\sigma,r}^*\to\R,\quad (u,\omega)\mapsto \Er(u,\omega).
\end{equation*}
We use a similar approach to the one in \cite{GG17}. We define:
\begin{equation}
\label{eq.81}
V(s) := - \frac{2G(s)}{s^2} = 2as^2 - 2bs^4.
\end{equation}
Given $ m > \omega > 0 $, we define
\begin{equation}
\label{eq.79}
R_* (\omega) := \inf\{s > 0\mid V(s) = m^2 - \omega^2\}
\end{equation}
as long as the set is non-empty and
\[
s_* = \left(\frac{a}{2b}\right)^{\frac{1}{2}},\quad V(s_*) = \sup(V) = \frac{a^2}{2b}
\]
the unique positive local maximum. We also define
\begin{equation}
\label{eq.80}
\omega_* := (m^2 - \sup(V))^{\frac{1}{2}}.
\end{equation}
The next proposition contains a list of properties of $ R_* $ that we will
use in this section. We do not provide a detailed proof of them, as they follow
from the inspection of the graph of $ V $ and the Implicit Function Theorem.
\begin{proposition}
\label{prop.3}
If $ \G $ is a double power non-linearity as in \eqref{eq.DP}, then
\[
\{s \geq 0\mid V(s) = m^2 - \omega^2\}\neq\emptyset\iff \omega\in [\omega_*,m].
\]
Then $ R_* $ is defined on the interval $ [\omega_*,m] $. Moreover, 
\begin{enumerate}
\item $ R_* $ is a smooth function from $ (\omega_*,m) $ to $ (0,s_*) $
\item\label{prop.3.2} $ V(R_* (\omega)) = m^2 - \omega^2 $
\item $ R_* '(\omega) < 0 $ for every $ \omega $ in $ (\omega_*,m) $
\item\label{prop.3.4} $ R_*(\omega_*) = s_* $ and $ R_*(m) = 0 $.
\end{enumerate}
\end{proposition}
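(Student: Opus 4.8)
The plan is to recognise $R_*$ as the inverse of the increasing branch of $V$ and to read everything off the graph of $V$ together with the inverse function theorem. Since $V(s) = 2as^2 - 2bs^4$ we have $V'(s) = 4s(a - 2bs^2)$, so $V$ is strictly increasing on $[0,s_*]$ and strictly decreasing on $[s_*,\infty)$, with $V(0) = 0$, $V(s_*) = a^2/(2b) = \sup V$, and $V(s) \to -\infty$ as $s \to \infty$; in particular $V([0,\infty)) = (-\infty, \sup V]$. This already gives the displayed equivalence: for $0 < \omega \le m$ one has $m^2 - \omega^2 \ge 0$, and $V(s) = m^2 - \omega^2$ is solvable for some $s \ge 0$ iff $m^2 - \omega^2 \le \sup V$, i.e.\ iff $\omega^2 \ge m^2 - \sup V = \omega_*^2$, i.e.\ iff $\omega \in [\omega_*, m]$. (As is implicit throughout the double-power discussion, $\tau(a,b,m) \ge 1$, equivalently $m^2 \ge \sup V$, so that $\omega_*$ is a well-defined point of $[0,m)$.)

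For a fixed $\omega \in (\omega_*, m)$ put $c := m^2 - \omega^2 \in (0, \sup V)$. Substituting $t = s^2$ turns $V(s) = c$ into $2bt^2 - 2at + c = 0$, whose discriminant $a^2 - 2bc$ is positive, giving two positive roots $t_1 < s_*^2 < t_2$; hence $\{s > 0 \mid V(s) = c\} = \{\sqrt{t_1}, \sqrt{t_2}\}$ with $\sqrt{t_1} < s_* < \sqrt{t_2}$, so the infimum is $\sqrt{t_1} \in (0,s_*)$. Equivalently, $R_*(\omega) = (V|_{[0,s_*]})^{-1}(m^2 - \omega^2)$, the composite of $\omega \mapsto m^2 - \omega^2$ with the inverse of the strictly increasing smooth bijection $V|_{[0,s_*]}\colon [0,s_*] \to [0,\sup V]$. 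Because $V' > 0$ on $(0,s_*)$, the inverse function theorem makes this inverse smooth on $(0,\sup V)$, hence $R_*$ smooth on $(\omega_*, m)$; and as $\omega$ runs over $(\omega_*, m)$, $m^2 - \omega^2$ runs over $(0, \sup V)$ and its $V|_{[0,s_*]}$-preimage over $(0,s_*)$. That is the first assertion, and the second is the defining identity $V(R_*(\omega)) = m^2 - \omega^2$.

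For the third assertion I would differentiate the second: $V'(R_*(\omega))\,R_*'(\omega) = -2\omega$, and since $R_*(\omega) \in (0,s_*)$ forces $V'(R_*(\omega)) > 0$ while $\omega > 0$, we get $R_*'(\omega) = -2\omega / V'(R_*(\omega)) < 0$. For the fourth, the value at $\omega_*$ is immediate because $s_*$ is the unique positive solution of $V(s) = \sup V$; for the value at $m$ one passes to the limit $\omega \to m^-$ in the identity: $V(R_*(\omega)) = m^2 - \omega^2 \to 0$ with $R_*(\omega) \le s_*$ and $V$ injective on $[0,s_*]$ with $V^{-1}(0) = 0$ there, so $R_*(\omega) \to 0$. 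Alternatively, all four items read off directly from the explicit formula $R_*(\omega) = \big((a - \sqrt{a^2 - 2b(m^2 - \omega^2)})/(2b)\big)^{1/2}$.

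The only point needing care is the first step — pinning down which of the (generically two) positive roots of $V(s) = m^2 - \omega^2$ the infimum selects, and noting that at the right endpoint the infimum in \eqref{eq.79} would actually return the nonzero root $\sqrt{a/b}$, so that ``$R_*(m) = 0$'' in the fourth assertion records the continuous extension of $R_*$ from $(\omega_*,m)$. Once $R_*(\omega)$ is known to lie on the increasing branch $(0,s_*)$, the four items are routine, and I expect no further obstacle.
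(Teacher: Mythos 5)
Your proof is correct and takes essentially the same route as the paper, which likewise reads everything off the graph of $ V $ and applies the Implicit Function Theorem to $ g(\omega,s) := V(s) - m^2 + \omega^2 $ after identifying $ R_*(\omega) $ as the smaller of the two positive roots, lying on the increasing branch $ (0,s_*) $. Your closing remark about the right endpoint is apt: the paper gets $ R_*(m) = 0 $ by taking the solution set over $ s \geq 0 $ (as in the proposition's statement, the origin being one of the two zeroes of $ V $), whereas you obtain it as the continuous extension from $ (\omega_*,m) $, consistent with the strict inequality $ s > 0 $ in \eqref{eq.79}; both readings yield the stated value.
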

\begin{proof}
From $ \omega_* \leq \omega \leq m $, it follows that 
$ m^2 - \omega_*^2 \geq m^2 - \omega^2 \geq 0 $.
By definition of $ \omega_* $, there holds $ \sup(V) \geq m^2 - \omega^2 \geq 0 $. Since
$ V $ is continuous and $ V(0) = 0 $, the image of $ V $ contains $ m^2 - \omega^2 $. 
\smallskip

(i). For every $ \omega $ in $ (\omega_*,m) $, the set in \eqref{eq.79} contains
exactly two points $ s_1 < s_* < s_2 $ and $ R_* (\omega) = s_1 $. Since $ V $ is
smooth and $ V'(s_1) > 0 $, the regularity of $ R_* $ follows by applying the
Implicit Function Theorem to $ g(\omega,s) := V(s) - m^2 + \omega^2 $.
\smallskip

(ii). This follows from the definition of $ R_* $. (iii). Taking the derivative with
respect to $ \omega $ in \eqref{prop.3.2}, we obtain $ V'(R_* (\omega))R_* '(\omega) = -2\omega $
whence $ R_* ' (\omega) < 0 $.\smallskip

(iv). From \eqref{eq.80}, $ V(s_*) = m^2 - \omega_*^2 $. Since $ \sup(V) $ is achieved
in a unique point, $ R_* (\omega_*) = \sqrt{a/2b} $. If $ \omega = m $, then $ R_* (m) = 0 $,
because $ V $ has exactly two zeroes on $ [0,+\infty) $ and the origin is one of these.
\end{proof}
Let $ R_\omega $ be the solution of the initial value problem 
\begin{equation}
\label{eq.IVP}
R_\omega ''(x) = G'(R_\omega (x)) + (m^2 - \omega^2) R_\omega(x),
\quad R_\omega '(0) = 0,\quad R_\omega(0) = R_* (\omega).
\end{equation}
\begin{proposition}
\label{prop.4}
For every $ \omega $ in $ (\omega_*,m) $.
\begin{enumerate}
\item\label{prop.4.1} $ R_\omega $ is positive
\item\label{prop.4.2} $ R_\omega $ is even and radially strictly 
decreasing with respect to the origin
\item\label{prop.4.3} $ R_\omega $ and $ R_\omega' $ are exponentially decaying as 
$ |x|\to\infty $.
\end{enumerate}
In particular, $ R_\omega $ is in $ H^1 _{r,+} (\R;\R) $ and it is radially decreasing.
\end{proposition}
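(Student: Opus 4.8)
The plan is to integrate \eqref{eq.IVP} once to obtain a first integral and then to read off all three conclusions from the geometry of $V$ established in Proposition~\ref{prop.3}. Since, by \eqref{eq.81}, $G(s)=-\tfrac12 s^2V(s)$, and hence $G'(s)=-sV(s)-\tfrac12 s^2V'(s)$, the right-hand side of \eqref{eq.IVP} is smooth, so I may start from the unique maximal $C^2$ solution $R_\omega$ given by Cauchy--Lipschitz. Multiplying \eqref{eq.IVP} by $2R_\omega'$ and integrating produces a constant $c$ with $(R_\omega')^2=2G(R_\omega)+(m^2-\omega^2)R_\omega^2+c$; evaluating at $x=0$ with $R_\omega'(0)=0$, $R_\omega(0)=R_*(\omega)$ and $V(R_*(\omega))=m^2-\omega^2$ from \eqref{prop.3.2} of Proposition~\ref{prop.3} forces $c=0$, so
\begin{equation}
\label{eq.firstint}
(R_\omega'(x))^2=R_\omega(x)^2\bigl((m^2-\omega^2)-V(R_\omega(x))\bigr)
\end{equation}
on the interval of existence. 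Since $0<R_*(\omega)<s_*$ by Proposition~\ref{prop.3}, the function $V$ is strictly increasing on $[0,R_*(\omega)]$, so the bracket in \eqref{eq.firstint} is positive on $[0,R_*(\omega))$ and vanishes at $R_*(\omega)$; hence \eqref{eq.firstint} traps $R_\omega$ in $[0,R_*(\omega)]$, which keeps $R_\omega$ and $R_\omega'$ bounded and extends $R_\omega$ to all of $\R$.

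Next I would prove \eqref{prop.4.1} and \eqref{prop.4.2}. From the formula for $G'$ and \eqref{prop.3.2} of Proposition~\ref{prop.3}, $R_\omega''(0)=G'(R_*(\omega))+(m^2-\omega^2)R_*(\omega)=-\tfrac12 R_*(\omega)^2V'(R_*(\omega))<0$ because $R_*(\omega)\in(0,s_*)$ where $V'>0$, so $0$ is a strict local maximum and $R_\omega'<0$ immediately to the right of $0$. If $R_\omega$ were not strictly decreasing on $(0,+\infty)$ there would be a smallest $x_1>0$ with $R_\omega'(x_1)=0$; by \eqref{eq.firstint} either $R_\omega(x_1)=0$ --- then $R_\omega(x_1)=R_\omega'(x_1)=0$ and uniqueness for \eqref{eq.IVP} would give $R_\omega\equiv 0$, contradicting $R_\omega(0)>0$ --- or $V(R_\omega(x_1))=m^2-\omega^2$, impossible since $0<R_\omega(x_1)<R_*(\omega)$ and $V$ is strictly increasing on $[0,R_*(\omega)]$. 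So $R_\omega>0$ and $R_\omega'<0$ on $(0,+\infty)$; evenness follows because $x\mapsto R_\omega(-x)$ solves the same Cauchy problem \eqref{eq.IVP}, hence equals $R_\omega$, and then $R_\omega$ is strictly decreasing on $(0,+\infty)$ and strictly increasing on $(-\infty,0)$, i.e.\ radially strictly decreasing.

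For \eqref{prop.4.3}, monotonicity and boundedness give $R_\omega(x)\to\ell\ge 0$ as $x\to+\infty$, and \eqref{eq.firstint} then shows $(R_\omega'(x))^2\to\ell^2((m^2-\omega^2)-V(\ell))$, which must vanish (otherwise $R_\omega$ could not converge); since $(m^2-\omega^2)-V(\ell)>0$ for $0\le\ell<R_*(\omega)$, this forces $\ell=0$, hence $V(R_\omega(x))\to 0$. Fixing $\var$ with $0<\var<m^2-\omega^2$, choose $x_\var$ with $V(R_\omega(x))\le\var$ for $x\ge x_\var$; on $[x_\var,+\infty)$, \eqref{eq.firstint} together with $R_\omega>0>R_\omega'$ gives $-\bigl(\log R_\omega\bigr)'(x)=\bigl((m^2-\omega^2)-V(R_\omega(x))\bigr)^{1/2}\ge(m^2-\omega^2-\var)^{1/2}>0$, the positivity using $\omega<m$, and integrating yields $R_\omega(x)\le R_\omega(x_\var)e^{-(m^2-\omega^2-\var)^{1/2}(x-x_\var)}$; moreover $|R_\omega'(x)|=R_\omega(x)\bigl((m^2-\omega^2)-V(R_\omega(x))\bigr)^{1/2}\le(m^2-\omega^2)^{1/2}R_\omega(x)$, so $R_\omega'$ decays at least as fast, and evenness transfers both bounds to $x\to-\infty$. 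Taken together, $R_\omega$ is positive, even, radially strictly decreasing and exponentially decaying, hence in $H^1_{r,+}(\R;\R)$.

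I expect the only delicate point to be the bootstrap in the second paragraph --- excluding an interior critical point or a finite-time zero of $R_\omega$ --- which is precisely where $R_*(\omega)<s_*$, the strict monotonicity of $V$ on $[0,s_*]$ from Proposition~\ref{prop.3}, and uniqueness for \eqref{eq.IVP} are indispensable; the remaining assertions are routine consequences of the first integral \eqref{eq.firstint}.
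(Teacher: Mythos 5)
Your argument is correct, but it takes a genuinely different route from the paper. The paper settles the whole proposition by citation: writing $F(s)=-G(s)-\tfrac12(m^2-\omega^2)s^2$, it notes that $R_*(\omega)$ is the first positive zero of $F$, applies \cite[Theorem~5]{BL83a} to obtain positivity, evenness and strict radial decrease of $R_\omega$, and obtains the exponential decay of $R_\omega$ and $R_\omega'$ from \cite[Remark~6.3]{BL83a}, since $\lim_{s\to 0}F'(s)/s=-(m^2-\omega^2)<0$. You instead reprove the relevant one-dimensional special case by hand: the zero-energy first integral $(R_\omega')^2=R_\omega^2\bigl((m^2-\omega^2)-V(R_\omega)\bigr)$, the sign of $R_\omega''(0)$, the exclusion of interior critical points via uniqueness, and a logarithmic-derivative estimate for the decay. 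Your route buys a self-contained, elementary phase-plane proof that also exhibits an explicit decay rate $\sqrt{m^2-\omega^2-\var}$; the paper's citation buys brevity and avoids re-verifying the ODE analysis, which is exactly what the quoted theorem of Berestycki and Lions encapsulates.

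Two small points in your write-up need a line each. First, the first integral by itself only confines $R_\omega$ to $[-R_*(\omega),R_*(\omega)]$, since the admissible set $\{V\le m^2-\omega^2\}$ is symmetric; this is all you need for boundedness and global existence, but positivity is only available after the second paragraph. Second, in the dichotomy at the first critical point $x_1$ you assert $0<R_\omega(x_1)<R_*(\omega)$ without justifying the lower bound: a priori $R_\omega$ could have crossed zero before $x_1$, or one could have $R_\omega(x_1)=-R_*(\omega)$. Both possibilities are excluded by the observation you already use elsewhere: at any zero of $R_\omega$ the first integral forces $R_\omega'=0$ there, contradicting the minimality of $x_1$; the same remark is what gives $R_\omega>0$ on all of $(0,+\infty)$ once you know $R_\omega'<0$ there. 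With these one-line patches the argument is complete.
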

\begin{proof}
From \eqref{eq.81} and \eqref{eq.79}, 
the point $ R_* (\omega) $ is the first positive zero of the non-linear term
\[
F(s) := -G(s) - \frac{1}{2}(m^2 - \omega^2)s^2.
\]
Therefore, by applying \cite[Theorem~5]{BL83a} with $ \zeta_0 := R_* (\omega) $, we obtain 
\eqref{prop.4.1} and \eqref{prop.4.2}, which follow from
(i), (ii) and (iv) of the quoted theorem. Still using the notations of the same
paper
\[
\lim_{s\to 0}\frac{F'(s)}{s} = - (m^2 - \omega^2).
\]
From Theorem~\ref{thm.minima-structure} the limit above is negative. Therefore the assumptions of \cite[Remark~6.3]{BL83a} are satisfied and both
$ R_\omega $ and $ R_\omega' $ have exponential decay as $ |x| $ diverges. Then
$ R_\omega $ is in $ H^1 _{d,+} (\R;\R) $.
\end{proof}
From Proposition~\ref{prop.4}, we have two well-defined functions:
\begin{equation}
\label{eq.83}
\begin{split}
\sigma&:(\omega_*,m)\to (0,+\infty),\quad \sigma(\omega) := \omega\|R_\omega\|_{L^2}^2 = \Chr(R_\omega,\omega)\\
e&:(\omega_*,m)\to (0,+\infty),\quad e(\omega) := \Er(R_\omega,\omega).
\end{split}
\end{equation}
\begin{remark}
The double power non-linearity defined in \eqref{eq.DP} satisfies \eqref{R2} and \eqref{R3}.
In fact, $ \G $ achieves negative values in a neighbourhood of the origin; the inequality
in \eqref{R3} is satisfied with powers $ p = 4 $ and $ q = 6 $, and $ \G(0) = 0 $.
Assumption \eqref{R1} is satisfied only for appropriate choices of $ a, b $ and $ m $.
In fact, the infimum of the function
\begin{equation*}
\frac{\V(s)}{s^2} = \frac{m^2}{2} - as^2 + bs^4
\end{equation*}
is positive if and only if it achieves a positive value at the minimum. Equivalently,
$ \tau(a,b,m) > 1 $ which is the function introduced in \eqref{eq.127}. Finally, it satisfies (\hyperlink{R4}{G4}). In fact, the non-linearity is of class $ C^\infty $ with 
$ \G'(0) = 0 $, which implies the existence of local solutions and \eqref{R1} provides
a priori bounds for local solutions, \cite{GV85}.
\end{remark}
Hereafter, we will assume that $ \tau > 1 $.
\begin{lemma}
\label{lem.decreasing-charge}
There exist $ 1 < \tau_* $ such that for every $ a,b $ and $ m $
\begin{enumerate}
\item if $ \tau(a,b,m)\geq \tau_* $
the function $ \sigma $ is strictly decreasing on
$ (\omega_*,m) $. If $ \tau = \tau_* $, it has a unique saddle point
$ \omega_s (a,b,m) $ 
\item\label{lem.decreasing-charge.2}
if $ 1 < \tau < \tau_* $, there are two critical points
$ \omega_{m} (a,b,m) < \omega_{M} (a,b,m) $
which are, respectively, a local minimum and a local maximum.
\end{enumerate}
\end{lemma}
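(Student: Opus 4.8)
The plan is to reduce everything to a careful analysis of the single real-valued function $\sigma(\omega)=\omega\|R_\omega\|_{L^2}^2$ on $(\omega_*,m)$ and to identify the critical structure of $\sigma$ with that of a one-variable function depending on the rescaled parameter $\tau=\tau(a,b,m)$. First I would differentiate $\sigma$. Since $R_\omega$ solves the autonomous ODE \eqref{eq.IVP} with the conserved quantity $R_\omega'(x)^2=(m^2-\omega^2)R_\omega(x)^2+2G(R_\omega(x))$ (the constant being $0$, exactly as in \eqref{eq.24}), one can write $\|R_\omega\|_{L^2}^2$ as an explicit quadrature: using $R_\omega$ as the variable of integration on a half-line,
\[
\|R_\omega\|_{L^2}^2 = 2\int_0^{R_*(\omega)} \frac{s^2\,ds}{\sqrt{(m^2-\omega^2)s^2+2G(s)}}
= 2\int_0^{R_*(\omega)} \frac{s\,ds}{\sqrt{\,(m^2-\omega^2)-V(s)\,}},
\]
where $V(s)=-2G(s)/s^2=2as^2-2bs^4$ is the function from \eqref{eq.81}. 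Then $\sigma(\omega)=2\omega\int_0^{R_*(\omega)} s\,ds/\sqrt{(m^2-\omega^2)-V(s)}$.

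Next I would perform the rescaling that removes all but one parameter. Writing $s=s_*\,t$ with $s_*=(a/2b)^{1/2}$, one has $V(s_*t)=\tfrac{a^2}{2b}(2t^2-t^4)=\sup(V)\,(2t^2-t^4)$, and $m^2-\omega^2$ ranges over $(0,\sup(V))$ as $\omega$ ranges over $(\omega_*,m)$. Introducing the normalized spectral parameter $\nu:=(m^2-\omega^2)/\sup(V)\in(0,1)$, the integral becomes $\sigma(\omega)=c(a,b)\,\omega\,\Psi(\nu)$ for an explicit constant $c(a,b)>0$ and a fixed, parameter-free function
\[
\Psi(\nu)=\int_0^{t_*(\nu)} \frac{t\,dt}{\sqrt{\,\nu-(2t^2-t^4)\,}},
\]
where $t_*(\nu)$ is the first positive root of $2t^2-t^4=\nu$. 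Now $\omega=(m^2-\nu\sup(V))^{1/2}$, and $\sup(V)=a^2/2b$, while $m^2=\tau a^2/2b$ by \eqref{eq.127}; hence $\omega^2=\tfrac{a^2}{2b}(\tau-\nu)$. So, up to a positive multiplicative constant, $\sigma$ as a function of $\nu\in(0,1)$ equals $(\tau-\nu)^{1/2}\,\Psi(\nu)$. Therefore the monotonicity/critical behaviour of $\sigma$ on $(\omega_*,m)$ is exactly the critical behaviour of
\[
g_\tau(\nu):=(\tau-\nu)^{1/2}\,\Psi(\nu),\qquad \nu\in(0,1),
\]
and since $\omega\mapsto\nu$ is a smooth decreasing bijection, "$\sigma$ strictly decreasing in $\omega$" corresponds to "$g_\tau$ strictly increasing in $\nu$", etc. I would then compute $g_\tau'(\nu)=0 \iff 2(\tau-\nu)\Psi'(\nu)=\Psi(\nu)$, i.e. $\tau = \nu + \tfrac{\Psi(\nu)}{2\Psi'(\nu)} =: h(\nu)$. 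The whole lemma thus reduces to the shape of the fixed one-variable function $h$ on $(0,1)$: if $h$ is strictly monotone (say strictly increasing) with a finite supremum $\tau_*:=\sup_{(0,1)} h>1$, then for $\tau\ge\tau_*$ the equation $h(\nu)=\tau$ has no solution in $(0,1)$ and $g_\tau$ has constant-sign derivative (so $\sigma$ is strictly monotone), with exactly one degenerate/saddle solution when $\tau=\tau_*$ is attained as a limit/interior max; and for $1<\tau<\tau_*$ it has exactly two solutions, the smaller a local min and the larger a local max of $g_\tau$ (hence of $\sigma$, after reversing orientation), which is \eqref{lem.decreasing-charge.2}. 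One must also check $h(\nu)>1$ on $(0,1)$ so that $\tau_*>1$, and track the behaviour of $\Psi,\Psi'$ at the endpoints $\nu\to 0^+$ (where $t_*(\nu)\sim(\nu/2)^{1/2}$, so $\Psi(\nu)\to$ a finite positive limit or $0$ — a short computation) and $\nu\to 1^-$ (where $t_*(\nu)\to 1$ and the integrand acquires an integrable singularity), to see how $g_\tau'$ behaves near the ends and to pin down signs.

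The main obstacle is precisely establishing the qualitative shape of $h$ — i.e. that $\Psi(\nu)/\Psi'(\nu)$ (equivalently the logarithmic derivative of $\Psi$) is such that $h(\nu)=\nu+\Psi/(2\Psi')$ is strictly monotone with the claimed finite supremum $\tau_*\in(1,\infty)$. This is a monotonicity statement about a ratio of incomplete elliptic-type integrals with a moving endpoint, so the clean way is to differentiate $\Psi$ under the integral sign after desingularizing: substitute $2t^2-t^4=\nu u$ or, better, $t^2=1-\sqrt{1-\nu}\,\rho$ to turn $\Psi$ into an integral over a fixed interval with a smooth positive integrand, so that $\Psi'$ and $\Psi''$ are genuine integrals one can estimate. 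Then one shows $(\log\Psi)'$ is monotone (a convexity/log-convexity property of $\Psi$, of the kind that is standard for such period-type integrals), from which the monotonicity of $h$ and the existence of $\tau_*$ follow, and the counting of critical points in each $\tau$-regime is immediate by the intermediate value theorem together with the strict monotonicity of $h$. I would carry out the desingularizing substitution explicitly, record the resulting integral formulas for $\Psi,\Psi'$, verify the sign of $g_\tau'$ at the endpoints to fix orientation, and then quote the log-convexity computation; the remaining bookkeeping (translating back from $\nu$ to $\omega$, naming $\omega_s,\omega_m,\omega_M$ as the preimages of the roots of $h(\nu)=\tau$, and noting smooth dependence on $(a,b,m)$ through $\tau$) is routine.
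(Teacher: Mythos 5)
Your reduction is on the right track and in fact coincides, after the substitution $\nu=\alpha^2$, with what the paper does: your $\Psi(\nu)$ is (up to a constant) $\ln\frac{1+\alpha}{1-\alpha}$, your $g_\tau$ is the paper's $k_1$, and your $h(\nu)=\nu+\Psi/(2\Psi')$ is exactly the paper's $k_2(\alpha)=\alpha^2+\frac{\alpha-\alpha^3}{2}\ln\frac{1+\alpha}{1-\alpha}$. But the structural hypothesis you then propose to verify is the wrong one, and it is internally inconsistent with the conclusion you want. If $h$ were strictly monotone on $(0,1)$, the equation $h(\nu)=\tau$ would have at most one solution for every $\tau$, so you could never produce the \emph{two} critical points $\omega_m<\omega_M$ required in case (2), nor an interior saddle at $\tau=\tau_*$ (a strictly increasing $h$ attains its supremum only in the limit $\nu\to 1^-$, giving no critical point at all when $\tau=\tau_*$). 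The correct shape, and the actual content of the lemma, is that $h$ is \emph{unimodal}: $h(0^+)=0$, $h(1^-)=1$, and $h$ has a unique interior maximum with value $\tau_*>1$; the two solutions for $1<\tau<\tau_*$ come from the ascending and descending branches of this hump, and the endpoint limit $1$ at $\nu\to1^-$ is what guarantees that every $\tau\in(1,\tau_*)$ is hit exactly twice (this is where the standing assumption $\tau>1$ enters). None of this is established in your proposal; it is precisely the ``main obstacle'' you defer, and the deferred tool (log-convexity of a period-type integral) would at best give monotonicity of $\Psi'/\Psi$, which neither matches the needed unimodality of $h$ nor is carried out.

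You also miss the decisive simplification that makes the problem elementary for the double power \eqref{eq.DP}: after the substitution $s\mapsto s^2$ the quartic under the square root in the quadrature becomes a quadratic, so the integral \eqref{eq.84} evaluates in closed form to a logarithm, $\sigma(\omega)=\frac{\omega}{2\sqrt{2b}}\ln\big(\frac{1+\alpha(\omega)}{1-\alpha(\omega)}\big)$. With this explicit formula the paper does not need any abstract convexity argument: one computes $k_2'$ and $k_2''$ directly, observes $k_2'\simeq 4\alpha$ near $0$ and $k_2'\to-\infty$ at $1$, and notes that $k_2''$ is a sum of two strictly decreasing functions, hence has at most one zero; this forces $k_2'$ to have a unique zero, giving the unimodal shape, the value $\tau_*=\sup k_2>1$, and the exact counting of critical points of $\sigma$ in each regime. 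To repair your argument you should either use this closed form, or, if you insist on treating $\Psi$ as an abstract integral, you must actually prove the unimodality of $h$ (not its monotonicity) together with the endpoint limits $h(0^+)=0$ and $h(1^-)=1$ and the sign of $\sigma'$ away from the hump; as written, the key analytic step is both misstated and missing.
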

\begin{proof}
From \eqref{prop.4.3} of Proposition~\ref{prop.4}, if we multiply \eqref{eq.IVP} by $ 2R_\omega' $ and integrate, we obtain
\[
R_\omega '(x)^2 = (m^2 - \omega^2) R_\omega (x) ^2 + 2G(R_\omega(x)).
\]
By \eqref{prop.4.2} of Proposition~\ref{prop.4}, we have
\[
R_\omega '(x) = -\sqrt{(m^2 - \omega^2) R_\omega (x)^2 + 2G(R_\omega (x))}.
\]
Since $ R_\omega $ is even, we can restrict to the integration on the interval
$ (0,+\infty) $. Therefore from \eqref{eq.83},
\begin{equation}
\label{eq.84}
\begin{split}
\sigma(\omega) &= 2\omega\int_0^\infty  R_\omega (x)^2 dx \\
&= 2\omega \int_0^\infty
\frac{R_\omega (x)^2 R_\omega'(x)dx}{-\sqrt{(m^2 - \omega^2) R_\omega (x)^2 + 2G(R_\omega(x))}}\\
&= 2\omega \int_0^\infty
\frac{R_\omega (x) R_\omega'(x)dx}{-\sqrt{m^2 - \omega^2 - V(R_\omega(x))}}\\
&= 2\omega \int_0^\infty
\frac{R_\omega (x) R_\omega'(x)dx}{-\sqrt{m^2 - \omega^2 - 2a R_\omega(x)^2 + 2bR_\omega(x)^4}}\\
&= \omega\int_{0}^{R_*(\omega)^2} \frac{ds}%
{\sqrt{m^2 - \omega^2 - 2as + 2bs^2}}.
\end{split}
\end{equation}
From \eqref{prop.3.2} of Proposition~\ref{prop.3}, we have
\begin{equation}
\label{eq.115}
m^2 - \omega^2 = 2aR_* (\omega)^2 - 2bR_* (\omega)^4
\end{equation}
which gives
\begin{equation}
\label{eq.85}
\left|R_* (\omega)^2 - \frac{a}{2b}\right|^2  = 
\frac{a^2}{4b^2} - \frac{m^2 - \omega^2}{2b} = \frac{a^2}{4b^2}(1 - \alpha^2(\omega))
\end{equation}
where
\begin{equation}
\label{eq.117}
\alpha(\omega) = \left(\frac{2b(m^2 - \omega^2)}{a^2}\right)^{\frac{1}{2}}.
\end{equation}
In order to find a suitable integration by substitution, we 
rearrange the argument of the square root in \eqref{eq.84}. From \eqref{eq.115},
\begin{equation}
\label{eq.116}
\begin{split}
m^2 - \omega^2 - 2as + 2bs^2 &= 2aR_* (\omega)^2 - 2bR_* (\omega)^4 - 2as + 2bs^2\\
&= 2b\left(\bigg(s - \frac{a}{2b}\bigg)^2 - \bigg(R_*(\omega)^2 - \frac{a}{2b}\bigg)^2\right).
\end{split}
\end{equation}
From \eqref{eq.84}, we can continue as 
\[
\begin{split}
&\frac{\omega}{\sqrt{2b}}
\int_0^{R_*(\omega)^2}\frac{ds}{\sqrt{(s - \frac{a}{2b})^2 - 
(R_*(\omega)^2 - \frac{a}{2b})^2}} \\
=& \frac{\omega}{\sqrt{2b}}
\Bigg[\ln\bigg|s - \frac{a}{2b} + \sqrt{\left(s - \frac{a}{2b}\right)^2 - 
\left(R_*(\omega)^2 - \frac{a}{2b}\right)^2}\bigg|\Bigg]_0^{R_* (\omega)^2}\\
=& \frac{\omega}{\sqrt{2b}} \ln\bigg|R_*(\omega)^2 - \frac{a}{2b}\bigg|
- \frac{\omega}{\sqrt{2b}} 
\ln\bigg| - \frac{a}{2b} + \sqrt{\frac{a^2}{4b^2} - 
\left(R_*(\omega)^2 - \frac{a}{2b}\right)^2}\bigg|
\end{split}
\]
using \eqref{eq.116}.
The integral can be evaluated by means of a hyperbolic trigonometric function substitution. 
We obtain
\[
\begin{split}
& \frac{\omega}{\sqrt{2b}}\ln\bigg|\frac{a}{2b}\sqrt{1 - \alpha^2(\omega)}\bigg|
-\frac{\omega}{\sqrt{2b}} \ln\bigg| - \frac{a}{2b} + \frac{a}{2b}\alpha(\omega)\bigg|\\
=& \frac{\omega}{2\sqrt{2b}}\ln(1 - \alpha^2(\omega))
-\frac{\omega}{2\sqrt{2b}} \ln(1 - \alpha(\omega))^2.
\end{split}
\]
Therefore, 
\[
\sigma(\omega) = \frac{\omega}{2\sqrt{2b}}\ln\left(\frac{1 + \alpha(\omega)}{1 - \alpha(\omega)}\right).
\]
In order to study the sign of the derivative of $ \sigma $, we represent it as
the composite function of $ \alpha $, which is decreasing and surjective
from the interval $ (\omega_*,m) $ to $ (0,1) $. This follows from \eqref{eq.85} and
\eqref{prop.3.4} of Proposition~\ref{prop.3}. From \eqref{eq.117} and \eqref{eq.127},
\[
\omega = \frac{a}{\sqrt{2b}}\sqrt{\tau - \alpha^2}.
\]
Therefore,
\begin{equation}
\label{eq.119}
\sigma(\omega) = \frac{a}{4b} k_1 (\alpha(\omega)),\quad\sigma'(\omega) = 
\frac{a}{4b} k_1'(\alpha(\omega))\alpha'(\omega)
\end{equation}
where
\[
k_1 (\alpha) = \sqrt{\tau - \alpha^2}
\ln\left(\frac{1 + \alpha}{1 - \alpha}\right),\quad\alpha\in (0,1).
\]
We have
\[
k_1 '(\alpha) = -\frac{\alpha}{\sqrt{\tau - \alpha^2}}
\ln\left(\frac{1 + \alpha}{1 - \alpha}\right) + 
\sqrt{\tau - \alpha^2}
\cdot\frac{2}{1 - \alpha^2}.
\]
Then
\begin{equation}
\label{eq.87}
\begin{split}
k_1 '(\alpha)\sqrt{\tau - \alpha^2}\left(\frac{1 - \alpha^2}{2}\right) 
&= 
\tau - \left(\alpha^2 + \frac{\alpha - \alpha^3}{2}
\ln\left(\frac{1 + \alpha}{1 - \alpha}\right)\right) =: \tau - k_2(\alpha).
\end{split}
\end{equation}
From \eqref{eq.119} and \eqref{eq.87} it follows
\begin{equation}
\label{eq.120}
\sigma'(\omega) = \frac{a}{2b}\frac{\alpha'(\omega)}%
{(1 - \alpha(\omega)^2)\sqrt{\tau - \alpha(\omega)^2}} 
\big(\tau - k_2(\alpha(\omega))\big).
\end{equation}
We define
\[
\tau_* := \sup_{\alpha\in(0,1)} k_2(\alpha).
\]
The behaviour of $ k_2 $ at the endpoints is
\begin{equation}
\label{eq.134}
k_2(0) = 0,\quad\lim_{\alpha\to 1} k_2 = 1.
\end{equation}
As $ \alpha $ converges to 1 the function $ k_2 $ converges to 1, proving that
$ \tau_*\geq 1 $. We show that the properties of the critical points of $ \sigma $
are exactly how we stated in the lemma. Since $ \alpha'(\omega) < 0 $, always different from
zero, from \eqref{eq.119} and \eqref{eq.87} it is sufficient to restrict to the solutions to 
\begin{equation}
\label{eq.133}
k_2 (\alpha) = \tau_*,\quad\alpha\in (0,1).
\end{equation}
\subsubsection*{The case $ \tau > \tau_* $}
Since $ \tau > k_2 $ on $ (0,1) $ the function $ k_1' $ is always positive, then $ \sigma' $
is negative on $ (\omega_*,m) $ by \eqref{eq.87}. Therefore $ \sigma $ is strictly decreasing.
\subsubsection*{The case $ \tau = \tau_* $}
We can show that there is one solution to \eqref{eq.133}. In fact,
\[
k_2'(\alpha) = 3\alpha + \frac{1 - 3\alpha^2}{2}\ln\left(\frac{1 + \alpha}{1 - \alpha}\right).
\]
We have
\[
\lim_{\alpha\to 0} k_2' = 0,\quad\lim_{\alpha\to 1} k_2' = -\infty.
\]
Therefore $ k_2 $ is decreasing in a neighbourhood of 1. Then $ k_2 $ achieves its supremum
in the interior of $ [0,1] $ proving that $ \tau_* > 1 $ and that 
$ \tau_* - k_2 $ has at least one zero in $ (0,1) $. We show that the supremum is achieved
only once. On the contrary $ k_2' $ would have two zeroes in the interval $ (0,1) $.
In a neighbourhood of the origin $ k_2'\simeq 4\alpha $, that is, has positive sign. Then 
if $ k_2' $ has a two zeroes it must have a third one, unless in one of the two 
$ k_2 '' $ vanishes as well. In both cases $ k_2'' $ would have two zeroes in $ (0,1) $. However,
\[
k_2''(\alpha)= \frac{4 - 6\alpha^2}{1 - \alpha^2} - 3\alpha
\ln\left(\frac{1 + \alpha}{1 - \alpha}\right)
\]
is the sum of two strictly decreasing functions. Thus, only one zero is allowed to
exist for $ k_2'' $. Now let $ \alpha_s $ be the unique zero of $ k_2' $. Since $ \alpha $ is
bijective, there exist only one $ \omega_s $ such that $ \alpha(\omega_s) = \alpha_s $.
From \eqref{eq.120} with $ \tau = \tau_* $ it follows that $ \sigma'(\omega_s) = 0 $ and 
$ \sigma''(\omega_s) = 0 $, while $ \sigma' $ is negative
at any other point of the interval $ (\omega_*,m) $.
\subsubsection*{The case $ 1 < \tau < \tau_* $}
Since $ k_2 $ has a unique critical point where its maximum is achieved, any value in the interval $ [1,\tau_*) $ is achieved exactly two times from \eqref{eq.134}.
Let $ \alpha_1 < \alpha_2 $ be such that $ k_2 (\alpha_1) = k_2 (\alpha_2) $;
$ k_2 > \tau $ on $ (\alpha_1,\alpha_2) $ and
$ k_2 < \tau $ on $ (0,\alpha_1)\cup(\alpha_2,1) $. Let $ \omega_m $ and $ \omega_M $ be such that
$ \alpha(\omega_m) = \alpha_2 $ and $ \alpha(\omega_M) = \alpha_1 $. Then
$ \omega_m < \omega_M $ and from \eqref{eq.120}
$ \sigma' < 0 $ on $ (\omega_*,\omega_m)\cup(\omega_M,m) $ and
$ \sigma' > 0 $ on $ (\omega_m,\omega_M) $, just as we stated in the lemma.
\end{proof}
Hereafter, we will use the notation 
\begin{equation}
\label{eq.121}
\sigma_m := \sigma(\omega_m),\quad\sigma_M := \sigma(\omega_M).
\end{equation}
The next lemma shows that from the point of view of the multiplicity of minima, one can have different behaviours depending on the choice of $ a,b $ and $ m $ and the constraint 
$ M_{\sigma,r}^* $.
\begin{theorem}
\label{thm.uniqueness}
If $ 1 < \tau < \tau_* $ there exist $ \sigma_2 $ in $ (\sigma_m,\sigma_M) $ such that two positive 
critical points of $ \Ers $ on $ M_{\sigma,r}^* $ have different energy if 
$ \sigma\neq\sigma_2 $. If 
$ \sigma = \sigma_2 $, there are two minima.
\end{theorem}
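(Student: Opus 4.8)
The plan is to reduce the statement to a one-variable analysis along the explicit soliton family $\{(R_\omega,\omega)\mid\omega\in(\omega_*,m)\}$ of Proposition~\ref{prop.4}. First, exactly as in the proof of Theorem~\ref{thm.minima-structure} (Lagrange multipliers, elliptic regularity, and the classification of positive decaying even solutions via \cite[Theorem~5]{BL83a}), every positive critical point of $\Ers$ on $M_{\sigma,r}^*$ equals $(R_\omega,\omega)$ for some $\omega\in(\omega_*,m)$ with $\sigma(\omega)=\sigma$, the charge function \eqref{eq.83}. By Lemma~\ref{lem.decreasing-charge}, when $1<\tau<\tau_*$ the function $\sigma(\cdot)$ is strictly monotone on each of $B_1:=(\omega_*,\omega_m)$, $B_2:=(\omega_m,\omega_M)$, $B_3:=(\omega_M,m)$; hence for $\sigma\in(\sigma_m,\sigma_M)$ there are exactly three positive critical points, one on each branch, with soliton frequencies $\omega^{(1)}(\sigma)<\omega_m<\omega^{(2)}(\sigma)<\omega_M<\omega^{(3)}(\sigma)$ and energies $e_i(\sigma):=\Er(R_{\omega^{(i)}(\sigma)},\omega^{(i)}(\sigma))$, while for $\sigma\notin(\sigma_m,\sigma_M)$ there are at most two (at $\sigma=\sigma_m$ and $\sigma=\sigma_M$ the two present are a soliton on $B_3$, resp.\ $B_1$, together with the cusp soliton $(R_{\omega_m},\omega_m)$, resp.\ $(R_{\omega_M},\omega_M)$).

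The key identity is $e'(\omega)=\omega\,\sigma'(\omega)$, obtained by differentiating $e(\omega)=\Er(R_\omega,\omega)=\|R_\omega'\|_{L^2}^2+\omega^2\|R_\omega\|_{L^2}^2$ (the second equality following from the Pohozaev-type identity \eqref{eq.24}, as in the proof of Theorem~\ref{thm.minima-structure}) and using the Euler--Lagrange equation \eqref{eq.56} together with $\sigma(\omega)=\omega\|R_\omega\|_{L^2}^2$; differentiability of $\omega\mapsto R_\omega$ comes from Proposition~\ref{prop.3} and smooth dependence of \eqref{eq.IVP} on the parameter, or one simply differentiates the closed form $\sigma(\omega)=\tfrac{\omega}{2\sqrt{2b}}\ln\tfrac{1+\alpha(\omega)}{1-\alpha(\omega)}$. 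Thus on each branch $\tfrac{de_i}{d\sigma}$ equals the corresponding soliton frequency. Since $\tfrac{d}{d\sigma}(e_2-e_1)=\omega^{(2)}-\omega^{(1)}>0$ while $B_1$ and $B_2$ both limit onto $(R_{\omega_m},\omega_m)$ as $\sigma\downarrow\sigma_m$, so that $e_1(\sigma_m)=e_2(\sigma_m)$, and symmetrically $\tfrac{d}{d\sigma}(e_3-e_2)=\omega^{(3)}-\omega^{(2)}>0$ with $e_2(\sigma_M)=e_3(\sigma_M)$, integration yields $e_1<e_2$ and $e_3<e_2$ on all of $(\sigma_m,\sigma_M)$: the minimal energy is never attained on $B_2$, and two critical points can have the same energy only if $e_1=e_3$. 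Finally $\tfrac{d}{d\sigma}(e_1-e_3)=\omega^{(1)}-\omega^{(3)}<0$, so $e_1-e_3$ is continuous and strictly decreasing on $[\sigma_m,\sigma_M]$ and has at most one zero.

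The remaining point, and the one I expect to be the real work, is that this zero is strictly interior, i.e.\ $(e_1-e_3)(\sigma_m)>0$ and $(e_1-e_3)(\sigma_M)<0$. Passing to the variable $\alpha=\alpha(\omega)$ of Lemma~\ref{lem.decreasing-charge} and writing $\alpha_1:=\alpha(\omega_M)<\alpha_2:=\alpha(\omega_m)$, one has $\sigma=c_\sigma S(\alpha)$ and $e=c_eE(\alpha)$ for explicit constants $c_\sigma,c_e>0$ and explicit $S,E$, and $e'(\omega)=\omega\sigma'(\omega)$ becomes $E'(\alpha)=2\sqrt{\tau-\alpha^2}\,S'(\alpha)$. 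At $\sigma=\sigma_m$ the $B_1$-critical point has energy $c_eE(\alpha_2)$ and the $B_3$-critical point has energy $c_eE(\alpha^*)$, where $\alpha^*<\alpha_1<\alpha_2$ and $S(\alpha^*)=S(\alpha_2)$; integrating by parts,
\[
E(\alpha_2)-E(\alpha^*)=\int_{\alpha^*}^{\alpha_2}2\sqrt{\tau-\alpha^2}\,S'(\alpha)\,d\alpha=\int_{\alpha^*}^{\alpha_2}\frac{2\alpha}{\sqrt{\tau-\alpha^2}}\bigl(S(\alpha)-S(\alpha_2)\bigr)\,d\alpha>0,
\]
because on $(\alpha^*,\alpha_2)$ one has $S>S(\alpha_2)$: by the shape of $\sigma$ in Lemma~\ref{lem.decreasing-charge}, $S$ increases on $(\alpha^*,\alpha_1)$ away from the value $S(\alpha^*)=S(\alpha_2)$ and decreases on $(\alpha_1,\alpha_2)$ back down to $S(\alpha_2)$. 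The inequality at $\sigma_M$ is the mirror computation, using that $S<S(\alpha_1)$ on $(\alpha_1,\alpha^{**})$, where $S(\alpha^{**})=S(\alpha_1)$ and $\alpha^{**}>\alpha_2$.

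Combining these, $e_1-e_3$ vanishes at a unique $\sigma_2\in(\sigma_m,\sigma_M)$. For $\sigma\in(\sigma_m,\sigma_2)$ one has $e_3<e_1<e_2$ and for $\sigma\in(\sigma_2,\sigma_M)$ one has $e_1<e_3<e_2$, so the critical points have pairwise distinct energies and there is a unique minimum; the strict endpoint inequalities handle $\sigma=\sigma_m,\sigma_M$ (two critical points, different energies) and $\sigma\notin[\sigma_m,\sigma_M]$ (at most one critical point). At $\sigma=\sigma_2$ one has $e_1(\sigma_2)=e_3(\sigma_2)<e_2(\sigma_2)$, and by Theorem~\ref{thm.minima-structure} every minimizer of $\E$ on $\M_{\sigma_2}$ is, up to translation and phase, one of these three solitons; hence the minimizers are precisely the two solitons on $B_1$ and $B_3$, i.e.\ $|K_{\sigma_2}|=2$.
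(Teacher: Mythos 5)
Your proposal is correct and follows essentially the same route as the paper: the key identity $e'(\omega)=\omega\,\sigma'(\omega)$ combined with integration by parts, which turns energy differences between critical points at the same charge into signed areas between the level $\sigma$ and the graph of $\sigma(\cdot)$, and then a monotonicity-plus-sign-change argument producing the unique level $\sigma_2$. Your bookkeeping (branch energies $e_i(\sigma)$ with $de_i/d\sigma=\omega^{(i)}(\sigma)$ and endpoint signs computed in the $\alpha$ variable) is an equivalent reformulation of the paper's area functions $g_1,g_2$ and the identity $e(\omega_3(\sigma))-e(\omega_1(\sigma))=(g_1-g_2)(\sigma)$, so no substantive difference.
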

\begin{proof}
For every $ \omega $ in $ (\omega_*,m) $ there holds
\begin{equation}
\label{eq.90}
e'(\omega)  = \omega\sigma'(\omega).
\end{equation}
In fact, from Theorem~\ref{thm.minima-structure}, $ (R_\omega,\omega) $ is a critical point of $ \Er $ on
$ \Mr_\sigma $ with Lagrange multiplier $ \omega $.
Since the function $ (\omega_*,m)\ni\omega\mapsto R_\omega $ is 
$ C^1 \big((\omega_*,m),H^1 _{r,+} (\R;\R)\big) $, we have
\[
\begin{split}
e'(\omega) &= \frac{d}{d\omega} \Er(R_\omega,\omega) = 
d\Er(R_\omega,\omega)[(\partial_\omega R_\omega,1)] \\
&= \omega
d\Chr(R_\omega,\omega)[(\partial_\omega R_\omega,1)] = \omega\frac{d}{d\omega}\Chr(R_\omega,\omega) = 
\omega\sigma'(\omega).
\end{split}
\]
For every $ \sigma $ in $ (\sigma_m,\sigma_M) $, there are three points 
$ \omega_1 (\sigma) < \omega_2(\sigma) < \omega_3 (\sigma) $ such that 
$ \sigma(\omega_i (\sigma)) = \sigma $. From Lemma~\ref{lem.decreasing-charge} and $ 1 < \tau < \tau_* $, the Mean Value Theorem 
forces $ \omega_1 (\sigma) < \omega_m < \omega_2 (\sigma) < \omega_M < \omega_3 (\sigma) $ 
(check Figure~\ref{fig.1}). Moreover, $ \omega_i $ are smooth functions on $ (\sigma_m,\sigma_M) $. 
We define
\[
g_1 (\sigma) :=  \int_{\omega_1(\sigma)}^{\omega_2 (\sigma)} (\sigma - \sigma(t))dt,
\quad g_2 (\sigma) :=  \int_{\omega_2(\sigma)}^{\omega_3(\sigma)} (\sigma(t) - \sigma)dt.
\]
Since $ (\omega_1 (\sigma),\omega_2 (\sigma))\subseteq (\omega_*,\omega_M) $ and
$ (\omega_2 (\sigma),\omega_3 (\sigma))\subseteq (\omega_M,m) $, both functions 
are positive. Moreover,
\begin{equation*}
g_1 (\sigma_m) = g_2 (\sigma_M) = 0,\quad g_1(\sigma_M),\ g_1(\sigma_m) > 0 .
\end{equation*}
Since $ \omega_1 (\sigma) < \omega_m < \omega_2 (\sigma) $, for $ \sigma' > \sigma $ we
have $ [\omega_1 (\sigma),\omega_2 (\sigma)]\subseteq [\omega_1 (\sigma'),\omega_2 (\sigma')] $
and $ [\omega_2 (\sigma),\omega_3 (\sigma)]\supseteq [\omega_2 (\sigma'),\omega_3 (\sigma')] $.
Then, $ g_1 $ is an increasing function and $ g_2 $ a decreasing function. Therefore,
$ g_1 - g_2 $ is a strictly increasing function  on the interval $ [\sigma_m,\sigma_M] $
attaining different signs at the endpoints. By the Intermediate Value Theorem, there exist
a unique $ \sigma_2 $ in $ (\sigma_m,\sigma_M) $ such that 
$ g_1 (\sigma_2) - g_2 (\sigma_2) = 0 $. From \eqref{eq.90} it follows that
\begin{equation}
\label{eq.99}
\begin{split}
e(\omega_3 (\sigma)) - e(\omega_1 (\sigma)) &= 
\int_{\omega_1 (\sigma)}^{\omega_3 (\sigma)} e'(t)dt =
\int_{\omega_1 (\sigma)}^{\omega_3 (\sigma)} t\sigma'(t)dt \\
&= -\int_{\omega_1 (\sigma)}^{\omega_3 (\sigma)} \sigma(t)dt + 
\sigma(\omega_3 (\sigma) - \omega_1 (\sigma)) \\
&= 
\int_{\omega_1 (\sigma_2)}^{\omega_3%
(\sigma_2)} (\sigma - \sigma(t))dt = (g_1 - g_2)(\sigma).
\end{split}
\end{equation}
for every $ \sigma $ in $ (\sigma_m,\sigma_M) $. Then
$ e(\omega_1 (\sigma_2)) = e(\omega_3 (\sigma_2)) $; if 
$ \sigma\neq\sigma_2 $, the last term of \eqref{eq.99} is different than 0, because 
$ \sigma_2 $ is the unique zero of $ g_1 - g_2 $ on $ [\sigma_m,\sigma_M] $. 
If $ \sigma\notin [\sigma_m,\sigma_M] $, there are not
critical points with the same charge, by \eqref{lem.decreasing-charge.2} of Lemma~\ref{lem.decreasing-charge}.
The equality \eqref{eq.99} does not hold for the pair 
$ \{\omega_1 (\sigma_2),\omega_2 (\sigma_2)\} $ or $ \{\omega_2 (\sigma_2),\omega_3 (\sigma_2)\} $, as \eqref{eq.99} would be equal to $ g_1 (\sigma_2) $ and
$ -g_2 (\sigma_2) $, respectively, a quantity different from zero in any case. 
Therefore, 
\[
\Er(R_{\omega_2 (\sigma_2)},\omega_2(\sigma_2)) > \Er(R_{\omega_i (\sigma_2)},\omega_i(\sigma_2))
\]
for $ i = 1,3 $. This allows us to conclude that there are exactly two positive minima if 
$ \sigma = \sigma_2 $ and one positive minimum if $ \sigma\neq\sigma_2 $. In fact, by \cite[Theorem~5]{BL83a} for fixed $ \omega $ there exist a unique positive and decaying solution 
to \eqref{eq.56}.
\end{proof}
\begin{center}
\begin{figure}[h!]
\centering\includegraphics[scale=0.7]{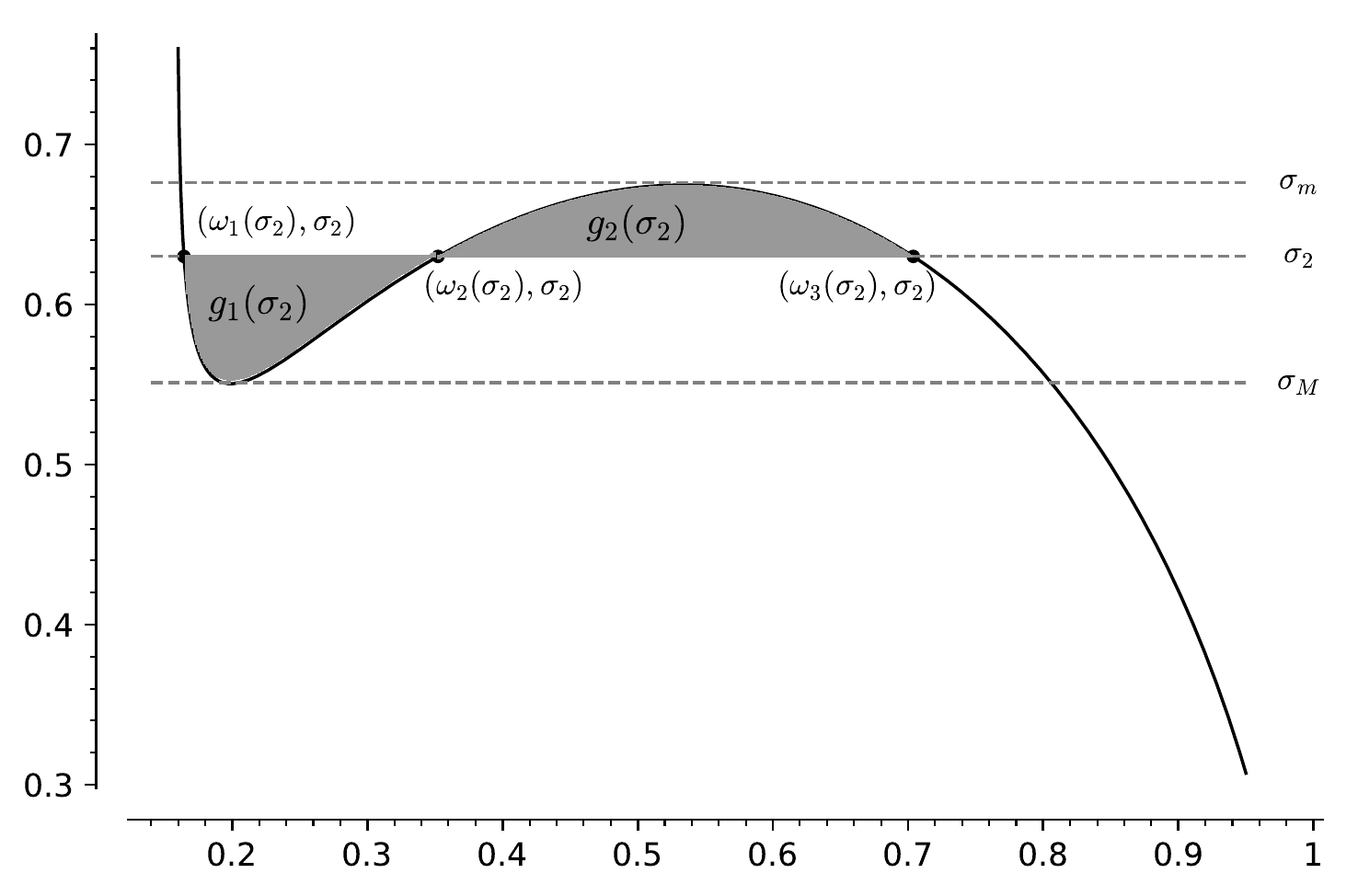}
\caption{Graph of $ \sigma(\omega) $ with $ 1 < \tau < \tau_* $. 
$ \sigma_2 $ is the level where the two shaded regions have the same area or, equivalently,
there are two minima.}
\end{figure}
\end{center}
\begin{remark}
\label{rem.regular-one-parameter}
A proof of the regularity of the one-parameter family defined in \eqref{eq.IVP} exists in dimension $ n\geq 3 $ as a result of \cite[Lemma~20]{SS85}. The only change needed in the quoted reference in order to obtain a proof tailored to our assumption
is $ \G'(u) $ is a function in $ L^2 $ (instead of $ L^{\frac{2n}{n + 2}} (\R^n) $ as in the quoted lemma). 
The authors also assume that the only $ H^1 _r (\R^n;\R) $ solution to $ \Delta v - 
\G''(R_\omega)v - (m^2 - \omega^2)v = 0 $
is the zero function. In our case ($ n = 1 $) we do not need such assumption as it follows
as a special outcome of the dimension one.
\end{remark}
\begin{remark}
\label{rem.minimum-level}
In the case \eqref{eq.DP} the threshold $ \sigma_* $ provided in \eqref{prop.2.3}
of Proposition~\ref{prop.2}, is zero. In fact, given $ \sigma > 0 $ one can choose an
element $ u $ in $ S(\sigma/m) $ such that $ \E_S (u) < 0 $. This follows, for instance,
from \cite[Lemma~5]{BBGM07}, under the assumption that
$ \G(s)\lesssim -s^{2 + \var} $ with $ 0 < \var < 4/n $ in
a neighbourhood of the origin (\cite[$ F_2 $]{BBGM07}), which is satisfied indeed with
$ \var = 2 $. Therefore, from \eqref{eq.41},
$ \Er(u,m) = \sigma m + \E_S(u) < \sigma m $.
\end{remark}
In the next theorem, which concludes this section, we will be able to count the number
of positive critical points of $ \Er $ on $ M_{\sigma,r}^* $ and to establish for each of them
whether it is a minimum or not. Let $ K_\sigma $ be the set defined in \eqref{eq.critical-set} and set
\[
Cr_{\sigma} := \{(R,\omega)\mid\exists\eta\text{ s.t. } d\Er(R,\omega) = \eta d\Chr(R,\omega)\}\cap
H^1 _{r,+} (\R;\R)
\]
which is the set of critical points of $ \Er $ constrained on $ \M_{\sigma,r}^* $.
\begin{theorem}
\label{thm.multiplicity-result}
For every $ a,b,m $ such that $ \tau(a,b,m) > 1 $, the number of positive critical points and minima
of $ \Er $ on $ M_{\sigma,r}^* $ behaves as follows:
\begin{enumerate}[(i)]
\item\label{thm.multiplicity-result.1} if $ \tau\geq\tau_* $ then 
$ |K_{\sigma}| = |Cr_{\sigma}| = 1 $
\item\label{thm.multiplicity-result.2} if $ 1 < \tau < \tau_* $ then there are two levels 
$ \sigma_m (a,b,m) < \sigma_M (a,b,m) $ such that 
\begin{enumerate}[\ (a)]
\item\label{thm.multiplicity-result.2.1} 
$ |K_{\sigma}| = |Cr_{\sigma}| = 1 $ if $ \sigma < \sigma_m $ or $ \sigma > \sigma_M $
\item\label{thm.multiplicity-result.2.2} 
$ |Cr_{\sigma}| = 2 $ and $ |K_{\sigma}| = 1 $ if $ \sigma\in\{\sigma_m,\sigma_M\} $
\item\label{thm.multiplicity-result.2.3}
 $ |Cr_{\sigma}| = 3 $ and $ |K_{\sigma}| = 1 $ if $ \sigma_m < \sigma < \sigma_M $ with $ \sigma\neq\sigma_2 $
\item\label{thm.multiplicity-result.2.4}
$ |Cr_{\sigma}| = 3 $ and $ |K_{\sigma}| = 2 $ if $ \sigma = \sigma_2 $. That is, there are \textsl{two} minima.
\end{enumerate}
\end{enumerate}
The level $ \sigma_2 $ is the level provided in Theorem~\ref{thm.uniqueness}.
\end{theorem}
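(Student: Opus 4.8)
The plan is to combine three earlier structural results: (1) Theorem~\ref{thm.minima-structure}, which says every element of $\GSc$ comes from a positive, even, radially decreasing $R$ solving \eqref{eq.56} with some $m>\omega>0$; (2) the classification in \cite[Theorem~5]{BL83a} that, for fixed $\omega$, the positive decaying solution to \eqref{eq.56} is \emph{unique}, so that the map $\omega\mapsto R_\omega$ of Proposition~\ref{prop.4} parametrizes \emph{all} positive critical points; and (3) the monotonicity analysis of the charge function $\sigma(\omega)=\frac{\omega}{2\sqrt{2b}}\ln\!\big(\frac{1+\alpha(\omega)}{1-\alpha(\omega)}\big)$ carried out in Lemma~\ref{lem.decreasing-charge}, together with the energy comparison of Theorem~\ref{thm.uniqueness}. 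The counting of $|Cr_\sigma|$ is then purely a matter of counting preimages $\sigma^{-1}(\sigma)$ in $(\omega_*,m)$, and the counting of $|K_\sigma|$ a matter of deciding, among those preimages, how many realize the infimum $I(\sigma)=\Ir(\sigma)$.

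First I would record the bijection between $Cr_\sigma$ and $\{\omega\in(\omega_*,m): \sigma(\omega)=\sigma\}$. Given $(R,\omega)\in Cr_\sigma$, the Lagrange-multiplier equation together with $\Chr(R,\omega)=\sigma>0$ forces (exactly as in the proof of Theorem~\ref{thm.minima-structure}) that the multiplier equals $\omega$, hence $R$ solves \eqref{eq.56}; since $R\in H^1_{r,+}$ it is positive and decaying, so by \cite[Theorem~5]{BL83a} $R=R_\omega$ after a translation — but $R$ is already even, so $R=R_\omega$ and necessarily $R(0)=R_*(\omega)$ by \eqref{eq.24} with $d=0$. Conversely each $R_\omega$ with $\sigma(\omega)=\sigma$ gives a point of $Cr_\sigma$. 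Thus $|Cr_\sigma|=\#\sigma^{-1}(\sigma)$. Now invoke Lemma~\ref{lem.decreasing-charge}: in case $\tau\ge\tau_*$ the function $\sigma$ is strictly decreasing (strictly monotone even at the unique saddle when $\tau=\tau_*$), so every level is hit at most once, and by Corollary~\ref{thm.min} and Remark~\ref{rem.minimum-level} ($\sigma_*=0$) there is a minimum for every $\sigma>0$, so $|Cr_\sigma|\ge 1$; hence $|Cr_\sigma|=1$, and since $K_\sigma\subseteq Cr_\sigma$ is nonempty, $|K_\sigma|=1$. This proves \eqref{thm.multiplicity-result.1}.

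For \eqref{thm.multiplicity-result.2}, use part \eqref{lem.decreasing-charge.2} of Lemma~\ref{lem.decreasing-charge}: $\sigma$ decreases on $(\omega_*,\omega_m)$, increases on $(\omega_m,\omega_M)$, decreases on $(\omega_M,m)$, with local minimum value $\sigma_m$ and local maximum value $\sigma_M$ (note $\sigma_m<\sigma_M$, and the behaviour of $\alpha$ at the endpoints gives $\sigma(\omega)\to 0$ as $\omega\to m$ and $\sigma(\omega)\to 0$ as $\omega\to\omega_*$, since $\alpha\to 0$ there). A direct count of horizontal-line intersections with this graph gives: one preimage if $\sigma<\sigma_m$ or $\sigma>\sigma_M$; two (one being the critical value) if $\sigma\in\{\sigma_m,\sigma_M\}$; three if $\sigma_m<\sigma<\sigma_M$. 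This yields the stated values of $|Cr_\sigma|$ in (a)--(d). For the $K_\sigma$ count: when $|Cr_\sigma|=1$ the unique critical point must be the minimum (existence of a minimum from Corollary~\ref{thm.min}), giving $|K_\sigma|=1$ in (a); when $\sigma\in\{\sigma_m,\sigma_M\}$ I would argue that the ``critical'' $\omega$ (where $\sigma'=0$) is a strict local max of the energy restricted appropriately — or more cleanly, apply the energy identity \eqref{eq.99}-style computation: $e(\omega_j)-e(\omega_i)=\int(\sigma-\sigma(t))dt$ along the charge-$\sigma$ fibre, which is a signed area that is nonzero at $\sigma_m,\sigma_M$ for the pair present, so the two critical points have different energy and only one is the minimum, giving $|K_\sigma|=1$ in (b). For $\sigma_m<\sigma<\sigma_M$, Theorem~\ref{thm.uniqueness} tells us precisely that the two ``outer'' critical points $\omega_1(\sigma),\omega_3(\sigma)$ have equal energy iff $\sigma=\sigma_2$, while the ``middle'' point $\omega_2(\sigma)$ always has strictly larger energy (the last paragraph of that proof); hence $|K_\sigma|=1$ if $\sigma\neq\sigma_2$ (case (c)) and $|K_\sigma|=2$ if $\sigma=\sigma_2$ (case (d)).

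The main obstacle is the bookkeeping at the boundary levels $\sigma\in\{\sigma_m,\sigma_M\}$ and the verification that the ``middle branch'' critical point is never a minimum: one must make sure the energy-comparison identity $e(\omega)=\int^\omega t\,\sigma'(t)\,dt + \mathrm{const}$ from \eqref{eq.90} is applied on the correct intervals and that the signs of $g_1,g_2$ (and their analogues at the endpoints of $[\sigma_m,\sigma_M]$) come out as claimed; this is exactly the content already organized in Theorem~\ref{thm.uniqueness}, so the remaining work is to package those inequalities into the four cases, plus to note — via Corollary~\ref{thm.min} and Remark~\ref{rem.minimum-level} — that $K_\sigma\neq\emptyset$ for every $\sigma>0$ so that the counts of $|K_\sigma|$ are bounded below by $1$ throughout. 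I do not expect any genuinely new estimate to be needed beyond what Lemmas~\ref{lem.decreasing-charge}, Theorems~\ref{thm.minima-structure} and \ref{thm.uniqueness} already supply.
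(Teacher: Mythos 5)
Your overall route --- identifying $Cr_\sigma$ with the preimage set $\{\omega\in(\omega_*,m):\sigma(\omega)=\sigma\}$ via the uniqueness statement of \cite[Theorem~5]{BL83a}, counting preimages from the monotonicity pattern of Lemma~\ref{lem.decreasing-charge}, and then deciding which preimages are minima through the energy comparison of Theorem~\ref{thm.uniqueness} together with the existence of at least one minimum (Corollary~\ref{thm.min}, Remark~\ref{rem.minimum-level} and Theorem~\ref{thm.minima-structure}) --- is exactly the strategy of the paper. There is, however, one concrete error in the step on which the preimage count rests: you claim that $\alpha\to 0$, hence $\sigma(\omega)\to 0$, as $\omega\to\omega_*$. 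This is false. By \eqref{eq.80} one has $m^2-\omega_*^2=\sup(V)=a^2/(2b)$, so \eqref{eq.117} gives $\alpha(\omega)\to 1$ as $\omega\to\omega_*$, and since $\tau>1$,
\[
\sigma(\omega)=\frac{a}{4b}\sqrt{\tau-\alpha(\omega)^2}\,
\ln\!\left(\frac{1+\alpha(\omega)}{1-\alpha(\omega)}\right)\longrightarrow+\infty
\qquad(\omega\to\omega_*^+).
\]
The paper uses precisely this divergence: it is what makes the first (decreasing) branch of $\sigma$ on $(\omega_*,\omega_m)$ have range $(\sigma_m,+\infty)$, hence exactly one preimage for every $\sigma>\sigma_M$ in case (a), and it is what produces the second preimage $\omega_2<\omega_M$ at the level $\sigma=\sigma_M$ in case (b). With the limit as you state it the picture is incoherent (a positive function cannot be strictly decreasing on $(\omega_*,\omega_m)$ while tending to $0$ at $\omega_*$), and taken literally your horizontal-line count would give no critical point at all when $\sigma>\sigma_M$, contradicting the existence of a minimum at every level.

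This is a localized and easily repaired slip: replace the limit at $\omega_*$ by the divergence above (the limit $\sigma\to 0$ as $\omega\to m$, which you state correctly, is the one needed on the third branch for $\sigma\leq\sigma_m$). Once corrected, the remaining bookkeeping you outline --- three preimages for $\sigma_m<\sigma<\sigma_M$, two at $\sigma\in\{\sigma_m,\sigma_M\}$, one otherwise; a single minimum whenever all critical points have pairwise distinct energies, and two minima exactly at $\sigma=\sigma_2$ where Theorem~\ref{thm.uniqueness} equalizes the energies of the two outer points while the middle one stays strictly higher --- matches the paper's proof step for step.
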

\begin{proof}
From Remark~\ref{rem.minimum-level}, Corollary~\ref{thm.min} and 
Theorem~\ref{thm.minima-structure}, for every $ \sigma > 0 $ there exist at least one 
positive minimum.
\subsubsection*{The case $ \tau\geq\tau_* $}
From Lemma~\ref{lem.decreasing-charge}, this is the case where $ \sigma $ is a strictly
decreasing function. Suppose that there are two positive solutions 
$ (\omega_1,R_{\omega_1}) $
and $ (\omega_2,R_{\omega_2}) $ in $ M_{\sigma,r} $. Without loss of generality we can
assume that $ \omega_1\leq\omega_2 $. If $ \omega_1 = \omega_2 $, then $ R_{\omega_1} = R_{\omega_2} $
by the uniqueness result \cite[Proof~of~Theorem~5]{BL83a}. If $ \omega_1 < \omega_2 $, then
$ \sigma(\omega_1) > \sigma(\omega_2) $ gives a contradiction. Since there is only one
critical point, it is a minimum.
\subsubsection*{The case $ 1 < \tau < \tau_* $}
On the intervals
$ (0,\sigma_m) $ and $ (\sigma_M,+\infty) $ we use the same argument as the previous
case. Then, there is exactly one positive minimum at each level. 

If $ \sigma = \sigma_M $, the value $ \sigma $ is achieved at 
$ \omega_M $. From \eqref{eq.119} and \eqref{eq.117}
$ \sigma(\omega) $ diverges as $ \omega\to\omega_* $. Then $ \sigma_M $ is achieved in a second
point $ \omega_2 < \omega_M $. The existence of a third point would imply the existence of three
critical points for $ \sigma $ which is ruled out by 
Lemma~\ref{lem.decreasing-charge}. 
Therefore, $ |Cr_{\sigma}| = 2 $ and only one critical
point is a minimum, from Theorem~\ref{thm.uniqueness}.

If $ \sigma = \sigma_m $, the value $ \sigma $ is
achieved in $ \omega_m $. Since $ \omega_m $ is a local minimum, and $ \sigma(\omega) $
converges to 0 as $ \omega\to m $, the value $ \sigma $ is achieved in a second point
$ \omega_2 $. The existence of a third point would imply the existence of three
critical points for $ \sigma $ which is ruled out by 
Lemma~\ref{lem.decreasing-charge}. 
Then, $ |Cr_{\sigma}| = 2 $ and only one critical point is a minimum, from Theorem~\ref{thm.uniqueness}.

Finally, if $ \sigma_m < \sigma < \sigma_M $, there are three points in $ (\omega_*,m) $
such that $ \sigma(\omega) = \sigma $. More precisely, these points can be obtained
by applying the Intermediate Value Theorem to the function $ \sigma $ on the intervals
$ (\omega_*,\omega_m) $, $ (\omega_m,\omega_M) $ and $ (\omega_M,m) $. Then
$ |Cr_\sigma| = 3 $; if $ \sigma\neq\sigma_2 $ only one is a minimum, while
if $ \sigma = \sigma_2 $, there are two minima, by Theorem~\ref{thm.uniqueness}.
\end{proof}
In the formulation of the initial value problem in \eqref{eq.IVP}, we restricted to positive
solutions. However, if $ (R_\omega,\omega) $ is a critical point, then $ (-R_\omega,\omega) $
is also a critical point. So, $ 2|Cr_{\sigma}| $ and $ 2|K_\sigma| $ are the number of critical points and minima of
$ \Er $ on $ M_{\sigma,r}^* $ regardless of the sign.
\begin{remark}
In \cite[Lemma~3.6]{Bon10}, C.~Bonanno proved that for the non-linear Klein-Gordon equation in dimension $ n\geq 3 $ there exist at least one local minimum on $ M_{\sigma,r}^* $ for every connected component of the set $ \{\G < 0\} $, provided $ \sigma $ is large enough. 
The conclusions of \eqref{thm.multiplicity-result.2.4} of Theorem~\ref{thm.multiplicity-result},
can be regarded to as an improvement of this result (even if the dimension is different)
at least in the case \eqref{eq.DP}: there are two local minima 
(in fact two minima) despite the set $ \{\G < 0 \} $ being connected.
\end{remark}
\section{The double power case: non-degeneracy of minima}
\label{sect.dd}
For each level of charge $ \sigma $, we will establish which minima are
degenerate or not. A critical point $ (R_\omega,\omega) $ is degenerate if
and only if the null space of the Hessian
bilinear form restricted to the tangent space
\begin{equation}
\label{eq.88}
T_{(R_\omega,\omega)} M_{\sigma,r}^* = \{(v,\eta)\in H^1 _r(\R;\R)\times\R\mid 
\eta\|R_\omega\|_{L^2}^2 + 2\omega(R_\omega,v)_{L^2} = 0\}.
\end{equation}
is non-trivial. The space above is the kernel of the differential of $ \Chr $ at
the point $ (R_\omega,\omega) $. From Theorem~\ref{thm.minima-structure}, the Lagrange multiplier
of the critical point $ (R_\omega,\omega) $ is $ \omega $. In \cite[Appendix]{GG17} we showed that if
$ \G $ is as in \eqref{eq.DP} then $ \Er $ and $ \Chr $ are $ C^2 (H^1 (\R;\R);\R) $.
The Hessian of $ \Ers $,
as a constrained functional is given by
\[
H(R_\omega,\omega) := D^2 \Ers(R_\omega,\omega) - \omega D^2 \Chrr(R_\omega,\omega).
\]
Given two vectors $ Y := (v,\eta) $ and $ Z := (w,\kappa) $ in $ T_{(R_\omega,\omega)} M_{\sigma,r}^* $,
the Hessians of the two functionals $ \Ers $ and $ \Chrr $ are
\begin{gather*}
\begin{split}
D^2 \Ers(R,\omega)[Y,Z] &= 
\eta\kappa\|R_\omega\|_{L^2}^2 + 2\omega\kappa(R_\omega,v)_{L^2} +  
2\omega\eta(R_\omega,w)_{L^2} \\
&+ D^2_{uu} \Ers(R_\omega,\omega)[v,w]
\end{split}\\
D^2 \Chrr(R_\omega,\omega)[Y,Z] = 2\kappa(R_\omega,v)_{L^2} + 2\eta(R_\omega,w)_{L^2} + 2\omega(v,w)_{L^2}.
\end{gather*}
By inspection, 
\[
\begin{split}
D^2 _{uu} \Ers(R_\omega,\omega)[v,w] &= \nint v'(x)w'(x)dx \\
&+ \nint (G''(R_\omega(x)) + m^2 + \omega^2)v(x)w(x) dx.
\end{split}
\]
Then, 
\begin{equation}
\label{eq.94}
\begin{split}
H(R_\omega,\omega)[Y,Z] &= \nint v'(x)w'(x)dx \\
&+ \nint (G''(R_\omega(x)) + m^2 - \omega^2)v(x)w(x) dx
+ \eta\kappa\|R_\omega\|_{L^2}^2.
\end{split}
\end{equation}
If $ v $ is $ H^2 _r $, then
\begin{equation}
\label{eq.93}
H(R_\omega,\omega)[Y,Z] = (L_+(v),w)_{L^2} + \eta\kappa\|R_\omega\|_{L^2}^2,
\end{equation}
where 
\[
L_+ (v) := -v'' + G''(R_\omega(x)) v + (m^2 - \omega^2)v.
\]
For every $ Y = (v,\eta) $ in $ T_{(R_\omega,\omega)} M_{\sigma,r}^* $, we define
\begin{equation}
\label{eq.123}
\begin{split}
\xi(v,\eta) := H(R_\omega,\omega)[Y,Y] &= 
\nint |v'(x)|^2 dx \\
&+ \nint (G''(R_\omega(x)) + m^2 - \omega^2)v(x)^2 dx
+ \eta^2\|R_\omega\|_{L^2}^2.
\end{split}
\end{equation}
Since $ (R_\omega,\omega) $ is a minimum, the Hessian is a positive semidefinite 
bilinear form. Therefore,
it is non-degenerate if and only if $ H(R_\omega,\omega)[Y,Y] = 0 $ implies that $ Y = 0 $ for every
vector $ Y $. We will follow a similar approach to the one illustrated in the proof of \cite[Proposition~2.9]{Wei85}. We denote with $ S(T_{(R_\omega,\omega)} M_{\sigma,r}^*) $ the unit
sphere with respect to the norm $ \|(v,\eta)\|^2 = \eta^2 + \|v\|_{L^2}^2 $.
\begin{proposition}
The minima $ (R_\omega,\omega) $ and $ (-R_\omega,\omega) $ are non-degenerate if and only if $ \sigma'(\omega)\neq 0 $.
\end{proposition}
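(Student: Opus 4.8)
The plan is to turn the degeneracy question into the vanishing of a single scalar, namely $(R_\omega,\partial_\omega R_\omega)_{L^2}$, and then to match that scalar with $\sigma'(\omega)$ through the identity $\sigma(\omega)=\omega\|R_\omega\|_{L^2}^2$ from \eqref{eq.83}. First I would parametrise the tangent space \eqref{eq.88}: every element is $(v,\eta(v))$ with $v\in H^1_r(\R;\R)$ and $\eta(v):=-2\omega\|R_\omega\|_{L^2}^{-2}(R_\omega,v)_{L^2}$, and under this identification the restricted Hessian \eqref{eq.94} becomes the symmetric bilinear form
\[
q(v,w):=\nint v'w'\,dx+\nint\big(G''(R_\omega)+m^2-\omega^2\big)vw\,dx+c\,(R_\omega,v)_{L^2}(R_\omega,w)_{L^2},\qquad c:=\frac{4\omega^2}{\|R_\omega\|_{L^2}^2}>0,
\]
on $H^1_r(\R;\R)$, since $\eta(v)\eta(w)\|R_\omega\|_{L^2}^2=c\,(R_\omega,v)_{L^2}(R_\omega,w)_{L^2}$. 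Hence $(R_\omega,\omega)$ is degenerate precisely when $q$ has a non-trivial null space. If $v_0\neq 0$ lies in that null space, testing against even test functions and elliptic regularity give $v_0\in H^2_r(\R;\R)$ and $L_+v_0=-\mu R_\omega$ with $\mu:=c\,(R_\omega,v_0)_{L^2}$, where $L_+$ is the operator in \eqref{eq.93}.

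The key auxiliary object is $\Psi:=(2\omega)^{-1}\partial_\omega R_\omega$. By Remark~\ref{rem.regular-one-parameter} the map $\omega\mapsto R_\omega$ is $C^1$ into $H^1_r(\R;\R)$, and differentiating \eqref{eq.56} in $\omega$ gives $L_+\Psi=R_\omega$; moreover $\Psi\neq 0$ because $R_\omega\neq 0$. I also need that $L_+$ has no even zero mode: differentiating \eqref{eq.56} in $x$ gives $L_+R_\omega'=0$, and by Proposition~\ref{prop.4} the function $R_\omega'$ lies in $L^2(\R)$ with exponential decay and vanishes only at the origin; since the equation $L_+\psi=0$ is asymptotically $-\psi''+(m^2-\omega^2)\psi=0$ with $m>\omega$ (Theorem~\ref{thm.minima-structure}), the space of its solutions decaying at $+\infty$ is one-dimensional, so every $L^2$ solution of $L_+\psi=0$ is a multiple of the odd function $R_\omega'$, whence $\ker L_+\cap H^2_r(\R;\R)=\{0\}$.

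Returning to a null vector $v_0$: from $L_+v_0=-\mu R_\omega=L_+(-\mu\Psi)$ and the triviality of the even kernel, $v_0=-\mu\Psi$; if $\mu=0$ this forces $v_0=0$, so $\mu\neq 0$, and inserting $v_0=-\mu\Psi$ into $\mu=c\,(R_\omega,v_0)_{L^2}$ yields $c\,(R_\omega,\Psi)_{L^2}=-1$. Conversely, if $c\,(R_\omega,\Psi)_{L^2}=-1$ then $v_0:=\Psi$ is a non-zero null vector, because $L_+\Psi+c\,(R_\omega,\Psi)_{L^2}R_\omega=R_\omega-R_\omega=0$. Thus $q$ is degenerate if and only if $c\,(R_\omega,\Psi)_{L^2}=-1$. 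Now I convert this into a statement about $\sigma$: $(R_\omega,\Psi)_{L^2}=(2\omega)^{-1}(R_\omega,\partial_\omega R_\omega)_{L^2}=(4\omega)^{-1}\tfrac{d}{d\omega}\|R_\omega\|_{L^2}^2$, and differentiating $\sigma(\omega)=\omega\|R_\omega\|_{L^2}^2$ gives $\tfrac{d}{d\omega}\|R_\omega\|_{L^2}^2=\omega^{-1}\big(\sigma'(\omega)-\|R_\omega\|_{L^2}^2\big)$, so $(R_\omega,\Psi)_{L^2}=(4\omega^2)^{-1}\big(\sigma'(\omega)-\|R_\omega\|_{L^2}^2\big)$. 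Since $c^{-1}=\|R_\omega\|_{L^2}^2(4\omega^2)^{-1}$, the condition $c\,(R_\omega,\Psi)_{L^2}=-1$ is equivalent to $\sigma'(\omega)=0$.

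Therefore $(R_\omega,\omega)$ is non-degenerate if and only if $\sigma'(\omega)\neq 0$; the statement for $(-R_\omega,\omega)$ follows at once because $G$ is even, so $G''(-R_\omega)=G''(R_\omega)$ and every quantity above is invariant under $R_\omega\mapsto-R_\omega$. The step I expect to be the main obstacle is the spectral input — establishing that $L_+$ carries no even zero mode and that $\Psi=(2\omega)^{-1}\partial_\omega R_\omega$ is the unique even solution of $L_+\Psi=R_\omega$ — and its crux is precisely the ``$R_\omega'$ has exactly one sign change'' property supplied by Proposition~\ref{prop.4}, which in dimension one makes the odd/even decomposition do all the work that a non-degeneracy hypothesis on the linearised operator would be needed for in higher dimensions.
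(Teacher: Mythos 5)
Your proposal is correct, and it proves exactly the statement as the paper defines it (triviality of the null space of the restricted Hessian), but it gets there by a genuinely different route. The paper splits the two implications: for $\sigma'(\omega)\neq 0$ it proves the stronger coercivity statement $\inf\xi>0$ on the unit sphere of $T_{(R_\omega,\omega)}M_{\sigma,r}^*$ by running a minimizing-sequence argument (weak convergence, Gagliardo--Nirenberg, dominated convergence using the exponential decay of $R_\omega$), extracting a minimizer, writing its Euler--Lagrange equation with a Lagrange multiplier, and only then invoking the two ODE facts $L_+(\partial_\omega R_\omega)=2\omega R_\omega$ and $\ker L_+\cap H^1=\mathrm{span}\{R_\omega'\}$ to force $v_*=\eta_*\partial_\omega R_\omega$ and hence $\eta_*\sigma'(\omega)=0$; for the converse it exhibits $(\partial_\omega R_\omega,1)$ as an explicit null direction when $\sigma'(\omega)=0$. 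You instead eliminate $\eta$ via the linear constraint, turning the restricted Hessian into the rank-one-perturbed form $q(v,w)=(L_+v,w)_{L^2}+c\,(R_\omega,v)_{L^2}(R_\omega,w)_{L^2}$ on $H^1_r$, and characterize its null space directly: any null vector solves $L_+v_0=-\mu R_\omega$, so by $L_+\Psi=R_\omega$ with $\Psi=(2\omega)^{-1}\partial_\omega R_\omega$ and the absence of an even kernel you get $v_0\in\mathrm{span}\{\Psi\}$, and degeneracy collapses to the single scalar identity $c\,(R_\omega,\Psi)_{L^2}=-1$, which your algebra correctly identifies with $\sigma'(\omega)=0$ (indeed your null vector $(\Psi,\eta(\Psi))$ is the paper's $(\partial_\omega R_\omega,1)$ up to the factor $2\omega$). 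What each approach buys: yours is shorter, handles both implications symmetrically through one equivalence, dispenses entirely with the compactness machinery, and even supplies a justification (one-dimensionality of the decaying solution space at $+\infty$, since $m>\omega$) for the kernel claim that the paper only asserts; the paper's argument, at the cost of the minimizing-sequence analysis, yields the quantitatively stronger conclusion that the Hessian is bounded below by a positive multiple of $\eta^2+\|v\|_{L^2}^2$ on the tangent sphere, which is more than kernel triviality, though that surplus is not needed for the proposition as stated. Two small points in your write-up are worth a sentence each if you expand it: the passage from testing $q$ against even $w$ only to the strong equation for $v_0$ is legitimate because all coefficients and $R_\omega$ are even, so the weak identity holds automatically against odd test functions; and the $C^1$ dependence $\omega\mapsto R_\omega$ that legitimizes $\partial_\omega R_\omega$ is exactly the content of Remark~\ref{rem.regular-one-parameter}, which you correctly cite.
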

\begin{proof}
Firstly, we show that from $ \sigma'(\omega)\neq 0 $ it follows that $ \inf(\xi) > 0 $
on the unit sphere 
$ S(T_{(R_\omega,\omega)} M_{\sigma,r}^*) $. On the contrary, we can suppose that the $ \inf(\xi) = 0 $, because 
$ (R_\omega,\omega) $ is a minimum. We prove that the infimum is achieved. Let $ (\eta_n,v_n) $ be a minimizing sequence in $ S(T_{(R_\omega,\omega)} M_{\sigma,r}^*) $. Since 
$ \eta_n^2 + \|v_n\|_{L^2}^2 = 1 $,
the sequence $ (\eta_n) $ is bounded. Up to extract a subsequence, we can suppose that there
exist $ \eta $ in $ \R $ such that 
\[
\lim_{n\to+\infty} \eta_n = \eta.
\]
Since $ R_\omega $ is in $ L^{\infty} (\R;\R) $, the function $ G'' $ can
be estimated by a single power, that is, $ |G''(R_\omega(x))|\leq c|R_\omega(x)|^2 $ for some 
$ c $ in $ \R $. By applying the Cauchy-Schwarz inequality to $ (|R_\omega|^2,v_n^2)_{L^2} $, and\eqref{eq.122} with $ p = 4 $, we obtain
\[
\begin{split}
\nint G''(R_\omega(x)) v_n(x)^2dx &
\geq -s_{GN}^2 c \|R_\omega\|_{L^4}^{2} \|v_n'\|_{L^2}^{\frac{1}{2}}
\|v_n\|_{L^2}^{\frac{3}{2}}\\
&\geq - s_{GN}^2 c \|R_\omega\|_{L^4}^{2} \|v_n'\|_{L^2}^{\frac{1}{2}}.
\end{split}
\]
The second inequality follows from $ \eta_n ^2 + \|v_n\|_{L^2}^2\leq 1 $.
Since $ \frac{1}{2} < 2 $, the sequence $ (v_n') $ is bounded as well. Then, up to extract
a subsequence, we can suppose that there exist $ v $ such that $ (v_n) $ converges to $ v $ in
$ L^\infty (\R;\R) $ and weakly in $ H^1 (\R;\R) $. 
Then $ \lim_{n\to+\infty} G''(R_\omega)v_n^2 = G''(R_\omega)v^2 $
in $ L^1(\R;\R) $. In fact, the sequence $ (G''(R_\omega)v_n^2) $ converges pointwise almost everywhere to 
$ G''(R_\omega)v^2 $ and it is dominated by $ c|R_\omega|^2(\sup_n \|v_n\|_{\infty}^2) $ which is $ L^1 $, because it
has exponential decay from \eqref{prop.4.3} of Proposition~\ref{prop.4}. From the weak convergence
$ v_n\rightharpoonup v $ in $ L^2 $ it also follows that $ (v,\eta) $ is in $ T_{(R_\omega,\omega)} M_{\sigma,r}^* $. Also $ v\neq 0 $; on the contrary $ (G''(R_\omega)v_n^2) $ would converge to
zero in $ L^1 $ and, from \eqref{eq.123}, we would obtain that
\[
\begin{split}
\xi(v_n,\eta_n)&\geq \nint (G''(R_\omega(x)) + m^2 - \omega^2)v_n(x)^2 dx
+ \eta_n^2\|R_\omega\|_{L^2}^2 \\
&= o(1) + (m^2 - \omega^2)(1 - \eta_n^2) + \eta_n^2\|R_\omega\|_{L^2}^2 \\
&= o(1) + (m^2 - \omega^2)(1 - \eta^2) + \eta^2 \|R_\omega\|_{L^2}^2 
\end{split}
\]
From Theorem~\ref{thm.minima-structure}, $ m^2 - \omega^2 > 0 $. Therefore, taking the
limit, we would obtain that $ \eta^2 $ is equal to 0 and 1 at the same time. Then, 
we can define
\begin{equation}
\label{eq.86}
(v_*,\eta_*) := \left(\frac{v}{\|v\|_{L^2}},\frac{\eta}{\|v\|_{L^2}}\right)
\in S(T_{(R_\omega,\omega)} M_{\sigma,r}^*).
\end{equation}
Summing up, we have
\[
\begin{split}
\xi(\eta_n,v_n) \geq o(1) + \xi(\eta,v) = o(1) + \|v\|_{L^2}^2\,\xi(\eta_*,v_*).
\end{split}
\]
The first inequality is given by the lower-semicontinuity property of the $ L^2 $ norm.
The limit in the first term is zero, while the limit in the third term in non-negative. Therefore,
$ \xi(v_*,\eta_*) = 0 $. Then the infimum of $ \xi $ is achieved. 
Since $ (v_*,\eta_*) $ is a critical point of $ \xi $ constrained to
$ S(T_{(R_\omega,\omega)} M_{\sigma,r}^*) $, there exist
$ \beta,\gamma > 0 $ such that 
\[
\nabla\xi(v_*,\eta_*) = \beta\big(2\omega R_\omega,\|R_\omega\|_{L^2}^2\big) + \gamma(v_*,\eta_*)
\]
Since $ \nabla\xi(v_*,\eta_*)\cdot(v_*,\eta_*) = 2\xi(v_*,\eta_*) = 0 $, we have $ \gamma = 0 $.
This means that $ (v_*,\eta_*) $ is a solution of the system
\[
-2v_* '' + 2(G''(R_\omega(x)) + m^2 - \omega^2)v_* = 2\omega\beta R_\omega,\quad 
2\eta_*\|R_\omega\|_{L^2}^2 = \beta\|R_\omega\|_{L^2}^2
\]
which gives
\begin{equation*}
v_* '' - (G''(R_\omega(x)) + m^2 - \omega^2)v_* = -2\omega\eta_* R_\omega.
\end{equation*}
Taking the derivative with respect to $ \omega $ in \eqref{eq.IVP}, we obtain
\begin{equation}
\label{eq.96}
(\partial_\omega R_\omega)'' - (G''(R_\omega(x))
 + m^2 - \omega^2) \partial_\omega R_\omega = - 2\omega R_\omega.
\end{equation}
Therefore, $ w := v_* - \eta_*\partial_\omega R_\omega $, is a solution to the differential
equation
\begin{equation}
\label{eq.95}
w'' - (G''(R_\omega(x)) + m^2  - \omega^2)w = 0.
\end{equation}
Since both $ v_* $ and $ \partial_\omega R_\omega $ are even, $ w $ is also even. However,
the space of solutions in $ H^1 (\R;\R) $ of the equation above is generated by $ R'_\omega $, which is an odd function. Therefore, $ w\equiv 0 $ and $ v_* = \eta_*\partial_\omega R_\omega $.
Since $ (v_*,\eta_*) $ is orthogonal to $ (2\omega R_\omega,\|R_\omega\|_{L^2}^2) $, we have
\[
\begin{split}
0 &= (v_*,\eta_*)\cdot (2\omega R_\omega,\|R_\omega\|_{L^2}^2) = 
(\eta_*\partial_\omega R_\omega,\eta_*)\cdot (2\omega R_\omega,\|R_\omega\|_{L^2}^2) \\
&= \eta_* (\partial_\omega R_\omega,1)\cdot (2\omega R_\omega,\|R_\omega\|_{L^2}^2) = \eta_*\sigma'(\omega).
\end{split}
\]
We can rule out the case $ \eta_*  = 0 $ and then obtain a contradiction with the assumption
that $ \sigma'(\omega)\neq 0 $.
If $ \eta_* = 0 $, then $ v_* $ is an even solution to \eqref{eq.95}
and thus equal to 0, which contradicts \eqref{eq.86}. This proves one of the two
implications of the lemma.

Conversely, if $ (R_\omega,\omega) $ is non-degenerate, that is the infimum of $ \xi $
on $ S(T_{(R_\omega,\omega)} M_{\sigma,r}^*) $ is positive, then $ \sigma'(\omega)\neq 0 $. 
On the contrary
\[
0 = \sigma'(\omega) = \|R_\omega\|_{L^2}^2 + 2\omega (R_\omega,\partial_\omega R_\omega)_{L^2} = 
(\partial_\omega R_\omega,1)\cdot
(2\omega R_\omega,\|R_\omega\|_{L^2}^2).
\]
Therefore, $ (\partial_\omega R_\omega,1) $ belongs to $ T_{(R_\omega,\omega)} M_{\sigma,r}^* $. 
Taking the scalar product in $ L^2 (\R;\R) $ with 
$ -\partial_\omega R_\omega $ in \eqref{eq.96}, we obtain
\[
\begin{split}
0 &= \nint |(\partial_\omega R_\omega)'|^{2}dx
+\nint (G''(R_\omega(x))
 + m^2 - \omega^2) (\partial_\omega R_\omega)^2dx \\
&- 2\nint\omega R_\omega\partial_\omega R_\omega dx\\
&=\nint |(\partial_\omega R_\omega)'|^{2}dx
+\nint (G''(R_\omega(x))
 + m^2 - \omega^2) (\partial_\omega R_\omega)^2dx
+ \|R_\omega\|_{L^2}^2 \\
&= \xi(\partial_\omega R_\omega,1).
\end{split}
\]
The second equality follows from $ \sigma'(\omega) = 0 $. Therefore, if we set 
\[
T := \frac{(\partial_\omega R_\omega,1)}{\sqrt{1 + \|\partial_\omega R_\omega\|_{L^2}^2}}
\]
we have $ \xi(T)  = 0 $ contradicting the non-degeneracy assumption.
\end{proof}
\begin{theorem}
\label{thm.degeneracy}
For every $ a,b,m $ such that $ \tau(a,b,m) > 1 $, 
\begin{enumerate}[(i)]
\item if $ \tau\neq\tau_* $, then minima are non-degenerate
\item if $ \tau = \tau_* $ and $ \sigma\neq\sigma_s $, then minima are non-degenerate.
If $ \sigma = \sigma_s $, then $ (R_{\omega_s},\omega_s) $ and
$ (-R_{\omega_s},\omega_s) $ are degenerate.
\end{enumerate}
\end{theorem}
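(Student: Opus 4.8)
The plan is to reduce the statement to the preceding Proposition, which says that a minimum $(R_\omega,\omega)$---equivalently $(-R_\omega,\omega)$---is non-degenerate if and only if $\sigma'(\omega)\neq 0$, and then to use Lemma~\ref{lem.decreasing-charge} to determine, in each regime of $\tau$, which frequencies $\omega$ can be carried by a minimum. Concretely, by Theorem~\ref{thm.minima-structure} and the discussion of \S\ref{sect.du} every positive minimum of $\Ers$ on $M^*_{\sigma,r}$ is of the form $(R_\omega,\omega)$ with $\omega\in(\omega_*,m)$ and $\sigma(\omega)=\sigma$, and $(R_\omega,\omega)$ is degenerate exactly when $(-R_\omega,\omega)$ is; so it is enough to show that the frequency of every minimum lies where $\sigma'\neq0$, the only exception being $\tau=\tau_*$ at the level $\sigma_s:=\sigma(\omega_s)$, whose minimizing frequency is $\omega_s$ with $\sigma'(\omega_s)=0$.

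For $\tau>\tau_*$, Lemma~\ref{lem.decreasing-charge} gives $\sigma'<0$ throughout $(\omega_*,m)$, so every minimum is non-degenerate by the Proposition. For $\tau=\tau_*$ the function $\sigma$ is strictly decreasing, hence injective, and $\sigma'$ vanishes only at the saddle point $\omega_s$; thus each level carries a single critical point, which is its minimum. If $\sigma\neq\sigma_s$ this frequency is $\neq\omega_s$, so $\sigma'\neq0$ and the minimum is non-degenerate; if $\sigma=\sigma_s$ the minimum is $(R_{\omega_s},\omega_s)$ (and $(-R_{\omega_s},\omega_s)$), where $\sigma'(\omega_s)=0$, hence degenerate by the Proposition. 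This settles (ii) and the half of (i) with $\tau>\tau_*$.

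There remains $1<\tau<\tau_*$, where Lemma~\ref{lem.decreasing-charge} allows $\sigma'$ to vanish at $\omega_m$ and $\omega_M$, so one must check that neither is a minimizing frequency. For $\sigma_m<\sigma<\sigma_M$ this is Theorem~\ref{thm.uniqueness}: the three critical frequencies satisfy $\omega_1(\sigma)<\omega_m<\omega_2(\sigma)<\omega_M<\omega_3(\sigma)$, the middle one never minimizes, and the outer two lie where $\sigma'<0$; for $\sigma\notin[\sigma_m,\sigma_M]$ the unique critical frequency lies in $(\omega_M,m)$ or $(\omega_*,\omega_m)$, again where $\sigma'<0$. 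The only levels left are $\sigma_m$ and $\sigma_M$, not covered by Theorem~\ref{thm.uniqueness}. At level $\sigma_m$ there are precisely two critical frequencies, $\omega_m$ and a point $\omega'\in(\omega_M,m)$ with $\sigma(\omega')=\sigma_m$; using $e'(t)=t\sigma'(t)$ from \eqref{eq.90} and integrating by parts as in \eqref{eq.99},
\[
e(\omega_m)-e(\omega')=\int_{\omega'}^{\omega_m}t\sigma'(t)\,dt=\int_{\omega_m}^{\omega'}\bigl(\sigma(t)-\sigma_m\bigr)\,dt>0
\]
because $\sigma>\sigma_m$ on $(\omega_m,\omega')$, so the minimum at level $\sigma_m$ is $(R_{\omega'},\omega')$, not $(R_{\omega_m},\omega_m)$. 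Symmetrically, at level $\sigma_M$ the two critical frequencies are $\omega_M$ and some $\omega''\in(\omega_*,\omega_m)$, and the same computation gives $e(\omega_M)-e(\omega'')=\int_{\omega''}^{\omega_M}(\sigma_M-\sigma(t))\,dt>0$, so the minimum is $(R_{\omega''},\omega'')$. Hence in every sub-case of $1<\tau<\tau_*$ the minimizing frequency sits where $\sigma'<0$, and the Proposition completes (i).

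The hard part is exactly this comparison at the endpoint levels $\sigma_m$ and $\sigma_M$: a priori the critical point at $\omega_m$ or $\omega_M$ could be the ground state---in which case the theorem would fail---and ruling this out relies on the identity $e'=t\sigma'$ together with the monotonicity pattern of $\sigma$ from Lemma~\ref{lem.decreasing-charge}. Everything else is routine bookkeeping with that lemma and the preceding Proposition.
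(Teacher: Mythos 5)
Your argument is correct and follows essentially the same route as the paper: reduce everything to the criterion that a minimum $(R_\omega,\omega)$ is non-degenerate iff $\sigma'(\omega)\neq 0$, use Lemma~\ref{lem.decreasing-charge} for the sign pattern of $\sigma'$, and at the borderline levels $\sigma_m,\sigma_M$ (for $1<\tau<\tau_*$) use $e'(\omega)=\omega\sigma'(\omega)$ and the integration as in \eqref{eq.99} to show the critical points at $\omega_m,\omega_M$ are not minima, while for $\tau=\tau_*$ injectivity of $\sigma$ identifies the unique (and, at $\sigma_s$, degenerate) minimum. This matches the paper's proof, with your treatment of the endpoint levels written out a bit more explicitly.
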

\begin{proof}
(i). If $ \tau > \tau_* $, $ \sigma' $ is always negative. Therefore, minima are non-degenerate
from Theorem~\ref{thm.degeneracy}. If $ 1 < \tau < \tau_* $, the derivative of $ \sigma $ 
vanishes at $ \omega_M $ and $ \omega_m $; $ \sigma_M $ is achieved at $ \omega_M $ and
another point $ \omega_1 < \omega_M $. We repeat the computation in \eqref{eq.99} with
$ \omega_M $ replacing $ \omega_3 $ and $ \sigma_M $ in place of $ \sigma $. Therefore,
\[
e(\omega_M) - e(\omega_1) = \int_{\omega_1}^{\omega_M} (\sigma_M - \sigma(t))dt > 0
\]
showing that $ \Er(R_{\omega_M},\omega_M) > \I(\sigma_M) $ and proving that minima
on $ M_{\sigma_M,r}^* $ are non-degenerate. Similarly, $ \sigma_m $ is achieved at
$ \omega_m $ and another point $ \omega_m < \omega_3 $. We repeat the computation in \eqref{eq.99} with $ \omega_m $ replacing 
$ \omega_1 $ and $ \sigma_m $ in place of $ \sigma_2 $. Therefore,
\[
e(\omega_m) - e(\omega_1) = \int_{\omega_1}^{\omega_m} (\sigma_m - \sigma(t))dt > 0
\]
proving that $ \Ers(R_{\omega_m},\omega_m) > \I(\sigma_m) $. Therefore, minima on
$ M_{\sigma_m,r}^* $ are non-degenerate. 

(ii). From Lemma~\ref{lem.decreasing-charge}, $ \sigma'(\omega)\neq 0 $ unless $ \omega = \omega_s $; from Theorem~\ref{thm.uniqueness}, on $ M_{\sigma_s,r}^* $ the only critical points are minima.
Therefore, both $ (R_{\omega_s},\omega_s) $ and $ (-R_{\omega_s},\omega_s) $ are degenerate
minima.
\end{proof}
\section{The double power case: stability of standing-waves}
\label{sect.sw}
In this conclusive section, we prove the orbital stability of standing-wave solutions to
\eqref{eq.NLKG} when $ \G $ is the double power $ \eqref{eq.DP} $.
\begin{theorem}
\label{thm.stability-wave}
For the non-linear term \eqref{eq.DP},
the set $ \Gamma(\Phi) $ is stable for every $ \sigma > 0 $ and $ \Phi $ in $ \Gamma_\sigma $.
\end{theorem}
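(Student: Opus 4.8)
The plan is to deduce the statement from the stability of the ground state $\GSc$ established in Theorem~\ref{thm.ground-state-stability}, which applies here because, under the standing assumption $\tau(a,b,m) > 1$, the double-power non-linearity \eqref{eq.DP} satisfies (G1)--(G4) and has threshold $\sigma_* = 0$ by Remark~\ref{rem.minimum-level}, so that $\GSc$ is non-empty and stable for every $\sigma > 0$. The first step is to describe $\GSc$ precisely. By Theorem~\ref{thm.minima-structure} every element of $\GSc$ lies on an orbit $\Gw(\Phi)$ whose positive, even, radially decreasing representative belongs to $K_\sigma$, and by Theorem~\ref{thm.multiplicity-result} we have $|K_\sigma|\in\{1,2\}$; since the orbits of the $S^1\times\R$-action partition $\GSc$ and two distinct positive, even, radially strictly decreasing minima cannot lie on a common orbit, $\GSc$ is the disjoint union of either one or two orbits $\Gw(\Phi_i)$. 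If $|K_\sigma| = 1$ then $\GSc = \Gw(\Phi)$ and the conclusion is exactly Theorem~\ref{thm.ground-state-stability}, so from now on I assume $\GSc = \Gw(\Phi_1)\cup\Gw(\Phi_2)$ with the union disjoint.

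Next I would record two elementary facts about these orbits. Each $\Gw(\Phi_i)$ is closed in $\X$: if $z_n\Phi_i(\cdot + y_n)$ converges strongly, then $(y_n)$ must be bounded, since $\|z_n\Phi_i(\cdot+y_n)\|_\X$ is a fixed positive number while $\Phi_i(\cdot + y_n)\wk 0$ in $\X$ whenever $|y_n|\to\infty$; passing to a subsequence with $y_n\to y$ and $z_n\to z\in S^1$ and using continuity of translations in $\X$, the limit is $z\Phi_i(\cdot + y)\in\Gw(\Phi_i)$. By similar reasoning applied after an isometric translation one gets $d_0 := \dist(\Gw(\Phi_1),\Gw(\Phi_2)) > 0$, because $d_0 = 0$ would force $\Phi_1\in\Gw(\Phi_2)$, i.e. $\Gw(\Phi_1) = \Gw(\Phi_2)$, contradicting disjointness.

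Now I would argue by contradiction. Suppose $\Gw(\Phi_1)$ is not stable; then there exist $\varepsilon_0 > 0$, $(\Psi_n)\subseteq\X$ and $(t_n)\subseteq\R$ with $\dist(\Psi_n,\Gw(\Phi_1))\to 0$ and $\dist(U(t_n,\Psi_n),\Gw(\Phi_1))\geq\varepsilon_0$, and since instability with $\varepsilon_0$ implies instability with any smaller value, I may assume $\varepsilon_0 < d_0/3$. Because $\dist(\Psi_n,\GSc)\leq\dist(\Psi_n,\Gw(\Phi_1))\to 0$, Theorem~\ref{thm.ground-state-stability} applied with $d_0/3$ in place of $\varepsilon$ gives, for all large $n$, that $\dist(U(t,\Psi_n),\GSc) < d_0/3$ for every $t$, while also $\dist(\Psi_n,\Gw(\Phi_1)) < d_0/3$. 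For such $n$ the two sets $\{t : \dist(U(t,\Psi_n),\Gw(\Phi_i)) < d_0/3\}$, $i=1,2$, are open (the map $t\mapsto U(t,\Psi_n)$ is continuous from $\R$ to $\X$ by \eqref{eq.126}, and $\dist(\cdot,\Gw(\Phi_i))$ is $1$-Lipschitz), they cover $\R$ (the minimum of the two distances is below $d_0/3$), and they are disjoint (two points of $\Gw(\Phi_1)$ and $\Gw(\Phi_2)$ both within $d_0/3$ of $U(t,\Psi_n)$ would be at distance $< 2d_0/3 < d_0$). Since $\R$ is connected and $t = 0$ belongs to the first set, the first set is all of $\R$; in particular $\dist(U(t_n,\Psi_n),\Gw(\Phi_2))\geq d_0/3$.

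Finally I would invoke concentration-compactness. By conservation of $\E$ and $\Ch$ along trajectories, \eqref{eq.101}, $\E(U(t_n,\Psi_n)) = \E(\Psi_n)\to\I(\sigma)$ and $\Ch(U(t_n,\Psi_n))\to\sigma$, so Lemma~\ref{lem.concentration-compactness} produces $(y_n)\subseteq\R$ and $\Xi\in\GSc$ with $U(t_n,\Psi_n)(\cdot + y_n)\to\Xi$ in $\X$ along a subsequence. Since $\Gw(\Phi_2)$ is translation-invariant, $\dist(\Xi,\Gw(\Phi_2)) = \lim_n\dist(U(t_n,\Psi_n)(\cdot+y_n),\Gw(\Phi_2)) = \lim_n\dist(U(t_n,\Psi_n),\Gw(\Phi_2))\geq d_0/3$, hence $\Xi\notin\Gw(\Phi_2)$ and so $\Xi\in\Gw(\Phi_1)$; but then $\dist(U(t_n,\Psi_n),\Gw(\Phi_1)) = \dist(U(t_n,\Psi_n)(\cdot+y_n),\Gw(\Phi_1))\leq\|U(t_n,\Psi_n)(\cdot+y_n) - \Xi\|_\X\to 0$, contradicting $\dist(U(t_n,\Psi_n),\Gw(\Phi_1))\geq\varepsilon_0$. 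The same argument applies to $\Gw(\Phi_2)$, and the case $\sigma = 0$ is handled separately below. The main obstacle is precisely this confinement to a single orbit: stability of $\GSc$ alone only keeps the perturbed solution near the union $\Gw(\Phi_1)\cup\Gw(\Phi_2)$, and it is the interplay of the positive gap $d_0$, the time-continuity of $t\mapsto U(t,\Psi_n)$, and the concentration-compactness of minimizing sequences that prevents the solution from drifting from one orbit to the other.
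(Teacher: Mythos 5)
Your proposal is correct, and it follows the paper's overall skeleton (reduce to the structure of $\GSc$ via Theorems~\ref{thm.minima-structure} and~\ref{thm.multiplicity-result}, note $\sigma_*=0$ from Remark~\ref{rem.minimum-level}, treat the single-orbit cases as an immediate consequence of Theorem~\ref{thm.ground-state-stability}, and isolate the two orbits by a positive gap), but in the decisive two-minima case $\sigma=\sigma_2$ your contradiction mechanism is genuinely different from the paper's. The paper builds an energy barrier: it shows $\E_i:=\inf\{\E(\Phi)\mid\Phi\in\partial B(S_i,\delta/2)\cap\M_{\sigma_2}\}>\I(\sigma_2)$ via Lemma~\ref{lem.concentration-compactness}, rescales the perturbed trajectories onto the constraint $\M_{\sigma_2}$ so that conserved energy can be compared at fixed charge, and then uses continuity in $t$ (an intermediate-value crossing of $\partial B(S_i,\delta/2)$) together with conservation of $\E$ to reach a contradiction. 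You instead confine the trajectory near a single orbit by a connectedness (dichotomy) argument in $t$, using only the stability of $\GSc$ and the gap $d_0$, and then apply Lemma~\ref{lem.concentration-compactness} directly to the escape-time sequence $U(t_n,\Psi_n)$ — legitimate because the lemma only requires $\Ch\to\sigma$ and $\E\to\I(\sigma)$, which conservation supplies — to upgrade ``within $d_0/3$ of $\Gw(\Phi_1)$'' to ``distance tending to $0$'', contradicting the assumed instability. What your route buys is the elimination of the barrier construction and of the rescaling trick $\alpha_n(t)=(\sigma_2/\Ch(\Phi_n))^{1/2}U(t,\Phi_n)$; what the paper's route buys is a quantitative statement (the barrier $\E_i>\I(\sigma_2)$ and the explicit lower bound $\dist(S_1,S_2)\geq\|R_{\omega_1}-R_{\omega_2}\|_{L^2}$, where you use a softer closedness-of-orbits argument that is also correct). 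Two minor points to make explicit if you write this up: the limit $\Xi$ produced by Lemma~\ref{lem.concentration-compactness} lies in $\GSc$ because $\E$ and $\Ch$ are continuous and pass to the limit (the lemma's statement only says $\Xi\in\X$, though the paper uses it in the same strengthened form), and the identification $\GSc=\Gw(\Phi_1)\cup\Gw(\Phi_2)$ with disjoint orbits should be justified exactly as you sketch, via Theorem~\ref{thm.minima-structure} and the fact that the two positive even representatives have distinct frequencies $\omega_1\neq\omega_2$.
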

\begin{proof}
We start by looking at the cases \eqref{thm.multiplicity-result.1}, 
\eqref{thm.multiplicity-result.2.1}, \eqref{thm.multiplicity-result.2.2}, 
\eqref{thm.multiplicity-result.2.3} of Theorem~\ref{thm.multiplicity-result}, where there are exactly two minima, $ (R_\omega,\omega) $
and $ (-R_\omega,\omega) $. We claim that 
\[
\Gamma_\sigma = \Gamma(R_\omega,-i\omega R_\omega).
\]
The set on the right is a subset of the ground-state by the invariance properties described
in \eqref{eq.77}.
From Theorem~\ref{thm.minima-structure}, 
given $ \Phi $ in $ \Gamma_\sigma $, there exist a pair $ (R_1,\omega_1) $ and $ (y,z) $ in 
$ \R\times S^1 $ such that 
\[
\Phi = (zR_1(\cdot + y),-i\omega_1 z R_1(\cdot + y)),\quad \Ers(R_1,\omega_1) = \I(\sigma).
\]
By Theorem~\ref{thm.multiplicity-result}, 
$ R_1 = R_\omega $ implying that $ \Phi $ is an element of 
$ \Gamma_\sigma(R_\omega,-i\omega R_\omega) $. Therefore, since $ \Gamma_\sigma $ is stable by
Theorem~\ref{thm.ground-state-stability}, the set $ \Gw(\Phi) $ is also stable. 
In the case \eqref{thm.multiplicity-result.2.4} of
Theorem~\ref{thm.multiplicity-result} there are two positive minima $ (\omega_1,R_{\omega_1}) $ 
and $ (\omega_2,R_{\omega_2}) $ in $ \M_{\sigma_2,r}^* $. Firstly, we show that 
\begin{equation}
\label{eq.17}
\delta := \mathrm{dist}\big(\Gamma(R_{\omega_1},-i\omega_1 R_{\omega_1}),
\Gamma(R_{\omega_2},-i\omega_2 R_{\omega_2})\big) > 0.
\end{equation}
We consider two arbitrary points $ \Phi_1 $ and $ \Phi_2 $ of the two sets. Therefore, there
are $ (y_1,z_1) $ and $ (y_2,z_2) $ in $ \R\times S^1 $ such that 
\begin{gather*}
\Phi_1 = (z_1R_{\omega_1}(\cdot + y_1),-iz_1\omega_1 R_{\omega_1}(\cdot + y_1)),\ 
\Phi_2 = (z_2R_{\omega_2}(\cdot + y_2),-iz_2\omega_2 R_{\omega_2}(\cdot + y_2))
\end{gather*}
according to the definition given in \eqref{eq.ssw}. We have
\[
\begin{split}
\dist(\Phi_1,\Phi_2)^2 &= \|z_1R_{\omega_1}(\cdot + y_1) - z_2R_{\omega_2}(\cdot + y_2)\|_{H^1}^2\\
&+ \|iz_1\omega_1R_{\omega_1}(\cdot + y_1)-iz_2\omega_2R_{\omega_2}(\cdot + y_2)\|_{L^2}^2.
\end{split}
\]
From \eqref{prop.4.2} of Proposition~\ref{prop.4}, $ R_{\omega_1} $ and $ R_{\omega_2} $ are positive and radially decreasing. Therefore
\[
\begin{split}
&\|z_1R_{\omega_1}(\cdot + y_1) - z_2R_{\omega_2}(\cdot + y_2)\|_{H^1}^2\geq
\|z_1R_{\omega_1}(\cdot + y_1) - z_2R_{\omega_2}(\cdot + y_2)\|_{L^2}^2\\
=&\|R_{\omega_1} - z_2\overline{z}_1 R_{\omega_2}(\cdot + y_2 - y_1)\|_{L^2}^2
\geq\|R_{\omega_1} - R_{\omega_2}\|_{L^2}^2.
\end{split}
\]
The equality follows from a variable change. Therefore,
\begin{equation*}
\dist(\Phi_1,\Phi_2)\geq\|R_{\omega_1} - R_{\omega_2}\|_{L^2}
\end{equation*}
which provides a lower bound for $ \delta $. Hereafter, we will use the notation
\begin{equation}
\label{eq.124}
S_1 := \Gamma(R_{\omega_1},-i\omega_1 R_{\omega_1}),\quad
S_2 := \Gamma(R_{\omega_2},-i\omega_2 R_{\omega_2}).
\end{equation}
Since $ \delta = \dist(S_1,S_2) > 0 $, in the metric space $ \X $ each of the sets
$ S_i $ are isolated from each other. In fact,
\begin{equation}
\label{eq.isolated}
B(S_i,\delta)\cap\Gamma_{\sigma_2} = S_i.
\end{equation}
For $ 1\leq i\leq 2 $. We define $ \E_i := \inf\{\E(\Phi)\mid \Phi\in\partial B(S_i,\delta/2)\cap\M_{\sigma_2}\} $. We claim that
\begin{equation}
\label{eq.111}
\E_i > I(\sigma_2).
\end{equation}
Otherwise, we would have a sequence $ (\Phi_n) $ such that
\begin{equation}
\label{eq.125}
\E(\Phi_n)\to I(\sigma),\quad \Ch(\Phi_n) = \sigma_2.
\end{equation}
By Lemma~\ref{lem.concentration-compactness}, up to extract a subsequence, we can suppose that there exist $ \Phi $ in $ \Gamma_{\sigma_2} $ and $ (y_n)\subseteq\R $ such that
\begin{equation*}
\Phi_{n} (\cdot + y_n)\to\Phi\text{ in } H^1(\R;\C)\times L^2(\R;\C).
\end{equation*}
Therefore, $ \Phi $ is in $ \partial B(S_i,\delta/2)\cap\Gamma_{\sigma_2} $, because 
$ \dist(\Phi_{n}(\cdot + y_n),S_i) = \dist(\Phi_{n},S_i) $. Then, we obtained a 
contradiction with \eqref{eq.isolated}. We are now able to prove that each of these
sets is stable. Suppose that $ S_i $ is not stable for some $ i $ in $ \{1,2\} $; the other
set is $ S_{3 - i} $.
Then, there are sequences $ (\Phi_n) $, 
$ (t_n) $ and $ \varepsilon_0 > 0 $ such that
\begin{equation}
\label{eq.100}
\mathrm{dist}(\Phi_n,S_i)\to 0,\quad
\mathrm{dist}(U(t_n,\Phi_n),S_i)\geq\varepsilon_0.
\end{equation}
We set $ \Psi_n := U(t_n,\Phi_n) $. From Theorem~\ref{thm.ground-state-stability}, 
the set $ \Gamma_{\sigma_2} $ is stable, which implies
$ \dist(\Psi_n,\Gamma_{\sigma_2})\to 0 $. From \eqref{eq.17}, \eqref{eq.124} 
and \eqref{eq.100}, 
$ \dist(\Psi_n,S_{3 - i})\to 0 $. Then, there exist $ n_0 $ such that for every $ n\geq n_0 $
$ \dist(\Psi_n,S_{3 - i}) < \frac{\delta}{2} $. Then, by \eqref{eq.isolated} 
$ \dist(\Psi_n,S_i)\geq\delta/2 $. From \eqref{eq.101}, the following curves
\[
\alpha_n\colon (-\infty,+\infty)\to\X,\quad
\alpha_n (t) = \left(\frac{\sigma_2}{\Ch(\Phi_n)}\right)^{\frac{1}{2}} U(t,\Phi_n)
\]
all belong to $ \M_{\sigma_2} $. From \eqref{eq.125}, 
we can also suppose that $ \E(\alpha_n (0)) < E_i $ for every $ n\geq n_0 $. Therefore, 
\begin{gather}
\label{eq.15}
\E(\alpha_{n_0} (0)) < \E_i\\
\label{eq.16}
\dist(\alpha_{n_0}(0),S_i) < \frac{\delta}{2},\quad
\dist(\alpha_{n_0}(t_{n_0}),S_i)\geq\frac{\delta}{2}.
\end{gather}
From \eqref{eq.16}, there exist $ \overline{t} $ such that 
$ \dist(\alpha_{n_0} (\overline{t}),S_i) = \delta/2 $. Then $ \alpha_{n_0} (\overline{t}) $
is in $ \partial B(S_i,\delta/2)\cap M_{\sigma_2} $. From \eqref{eq.101},
$ \E(\alpha_{n_0}(0)) = \E(\alpha_{n_0} (\overline{t})) $. Therefore, 
$ \E(\alpha_{n_0}(\overline{t})) < \E_i $, which contradicts \eqref{eq.111}.
\end{proof}
\subsection*{The case $ \sigma = 0 $}
We conclude this section by looking at the case $ \sigma = 0 $, which we include for the
sake of completeness. If $ \sigma = 0 $, from \eqref{R1} the minimum of $ \E $ is 
achieved only by the pair $ (0,0)\in \X $. Therefore, $ \Gamma_0 = \{(0,0)\} $. The
stability of $ \Gamma_0 $ relies only on the continuity and the coercivity of $ \E $.
\begin{proposition}
$ \Gamma_0 $ is stable.
\end{proposition}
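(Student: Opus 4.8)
The plan is to combine three facts already available: that $\Gamma_0$ reduces to the single point $(0,0)$, that $\E$ is coercive on the whole of $\X$, and that $\E$ is conserved along the flow $U$. Since $\Gamma_0=\{(0,0)\}$, for every $\Phi$ in $\X$ one has $\dist(\Phi,\Gamma_0)=\|\Phi\|_\X$, so the definition of stable subset reduces to proving: for every $\var>0$ there is $\delta>0$ with $\|\Phi\|_\X<\delta\implies\|U(t,\Phi)\|_\X<\var$ for every $t$ in $(-\infty,+\infty)$.

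First I would record the coercivity estimate, which is nothing but the computation in the proof of \eqref{prop.1.3} of Proposition~\ref{prop.1}: using only \eqref{R1} (together with $\G(0)=0$), for \emph{every} $\Phi=(\phi,\phi_t)$ in $\X$ one has
\[
2\E(\Phi)=\|\phi_t\|_{L^2}^2+\|\phi'\|_{L^2}^2+2\nint\V(\phi(x))dx\geq\|\phi_t\|_{L^2}^2+\|\phi'\|_{L^2}^2+\mu\|\phi\|_{L^2}^2\geq c\|\Phi\|_\X^2,
\]
where $c:=\min\{\mu,1\}>0$; the argument there never used membership in $\M_\sigma$. In particular $\E\geq 0$ on $\X$ and $\E(0,0)=0$.

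Then I would fix $\var>0$ and exploit the regularity of $\E$. By \eqref{prop.1.5} of Proposition~\ref{prop.1} the functional $\E$ is continuous, hence continuous at $(0,0)$; since $\E(0,0)=0$ there is $\delta>0$ such that $\|\Phi\|_\X<\delta$ implies $\E(\Phi)<\tfrac{c}{2}\var^2$. (Should one prefer not to invoke continuity, the same follows by bounding $\E(\Phi)$ above by $\tfrac12\|\phi_t\|_{L^2}^2+\tfrac12\|\phi'\|_{L^2}^2+\tfrac{m^2}{2}\|\phi\|_{L^2}^2+\tfrac{C}{p}\|\phi\|_{L^p}^p+\tfrac{C}{q}\|\phi\|_{L^q}^q$ via \eqref{eq.53} and the one-dimensional Sobolev embedding $H^1\hookrightarrow L^p\cap L^q$, a quantity that is $o(1)$ as $\|\Phi\|_\X\to 0$.) For an arbitrary initial datum $\Phi$ with $\|\Phi\|_\X<\delta$, assumption (G4) makes the trajectory $t\mapsto U(t,\Phi)$ globally defined and \eqref{eq.101} gives $\E(U(t,\Phi))=\E(\Phi)<\tfrac{c}{2}\var^2$ for every $t$; applying the coercivity estimate to $U(t,\Phi)$ in place of $\Phi$ then yields
\[
\|U(t,\Phi)\|_\X^2\leq\tfrac{2}{c}\,\E(U(t,\Phi))<\var^2,
\]
that is $\dist(U(t,\Phi),\Gamma_0)=\|U(t,\Phi)\|_\X<\var$ for all $t$, which is exactly the stability of $\Gamma_0$.

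The argument is essentially mechanical, and I do not expect a genuine obstacle: the only point worth a remark is that the coercivity bound $2\E(\Phi)\geq c\|\Phi\|_\X^2$ holds on all of $\X$, not merely on the constraint $\M_\sigma$ where Proposition~\ref{prop.1} states it, which is immediate from \eqref{R1}. In contrast with the cases $\sigma>0$, no Concentration-Compactness input is needed here: the energy sublevels are bounded and the unique minimizer is the origin, so conservation of $\E$ alone confines the whole orbit to a small ball around $0$.
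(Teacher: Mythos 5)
Your proof is correct and follows essentially the same route as the paper: the identity $\dist(\Phi,\Gamma_0)=\|\Phi\|_\X$, continuity of $\E$ at the origin, conservation of $\E$ along the flow, and the coercivity estimate $2\E(\Phi)\geq\min\{\mu,1\}\|\Phi\|_\X^2$ (the paper phrases the last step through the function $h_1$ of Proposition~\ref{prop.1}). Your explicit remark that the coercivity bound holds on all of $\X$, not just on $\M_\sigma$, is a welcome clarification of a point the paper leaves implicit.
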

\begin{proof}
Given $ \var > 0 $, by \eqref{prop.1.3} of Proposition~\ref{prop.1} there exist $ e > 0 $ 
such that $ h_1 (e) < \var $. By \eqref{prop.1.5} of the same proposition, there exist
$ \delta > 0 $ such that $ \|\Phi\|_\X < \delta $ implies $ \E(\Phi) < e $. We also
notice that $ \dist(\Phi,\Gamma_0) = \|\Phi\|_\X $. Then, given $ \Phi $ such that 
$ \dist(\Phi,\Gamma_0) < \delta $, we also have $ \|\Phi\|_\X < \delta $.
Then $ \E(\Phi) < e $ by the continuity of $ \E $. From \eqref{eq.101}, 
$ \E(U(t,\Phi)) < e $ for every $ t $. Then $ \|U(t,\Phi)\|_\X < h_2 (e) < \var $
by \eqref{prop.1.3} of Proposition~\ref{prop.1}. Therefore $ \dist(U(t,\Phi)) < \var $ for every $ t $, which concludes the proof.
\end{proof}
\def\cprime{$'$} \def\cprime{$'$} \def\cprime{$'$} \def\cprime{$'$}
  \def\cprime{$'$} \def\cprime{$'$} \def\cprime{$'$} \def\cprime{$'$}
  \def\cprime{$'$} \def\polhk#1{\setbox0=\hbox{#1}{\ooalign{\hidewidth
  \lower1.5ex\hbox{`}\hidewidth\crcr\unhbox0}}} \def\cprime{$'$}
  \def\cprime{$'$} \def\cprime{$'$}

\end{document}